\title{Universal flattening of Frobenius}
\author{Takehiko Yasuda}
\address{Department of Mathematics and Computer Science, 
Kagoshima University, 1-21-35 Korimoto, Kagoshima 890-0065, Japan}
\email{yasuda@sci.kagoshima-u.ac.jp}
\theoremstyle{plain}
\newtheorem{thm}{Theorem}[section]
\newtheorem{prop}[thm]{Proposition}
\newtheorem{cor}[thm]{Corollary}
\newtheorem{lem}[thm]{Lemma}
\newtheorem{claim}[thm]{Claim}
\theoremstyle{definition}
\newtheorem{defn}[thm]{Definition}
\newtheorem{ex}[thm]{Example}
\newtheorem{Q}[thm]{Question}
\theoremstyle{remark}
\newtheorem{rem}[thm]{Remark}
\def\AA{\mathbb A}
\def\GG{\mathbb G}
\def\QQ{\mathbb Q}
\def\RR{\mathbb R}
\def\ZZ{\mathbb Z}
\def\ZZpos{\mathbb{Z}_{>0}}
\def\ZZnonneg{\mathbb{Z}_{\ge 0}}
\def\RRpos{\mathbb{R}_{> 0}}
\def\RRnonneg{\mathbb{R}_{\ge 0}}
\def\cO{\mathcal{O}}
\def\cU{\mathcal{U}}
\def\cV{\mathcal{V}}
\def\cW{\mathcal{W}}
\def\cX{\mathcal{X}}
\def\cZ{\mathcal{Z}}
\def\fa{\mathfrak a}
\def\fb{\mathfrak b}
\def\fc{\mathfrak c}
\def\fd{\mathfrak d}
\def\fm{\mathfrak m}
\def\fz{\mathfrak z}
\def\11{\boldsymbol{1}}
\def\sm{\mathrm{sm}}
\def\red{\mathrm{red}}
\def\pr{\mathrm{pr}}
\def\minDelta{\Delta_{\mathrm{min.res.}}}
\def\cocoa{{\hbox{\rm C\kern-.13em o\kern-.07em C\kern-.13em o\kern-.15em A}}}
\DeclareMathOperator{\codim}{codim}
\DeclareMathOperator{\Spec}{Spec}
\DeclareMathOperator{\cSpec}{\mathcal{S}\mathit{pec}}
\DeclareMathOperator{\Hom}{Hom}
\DeclareMathOperator{\initial}{In}
\DeclareMathOperator{\relint}{relint}
\DeclareMathOperator{\FB}{FB}
\DeclareMathOperator{\Hilb}{Hilb}
\DeclareMathOperator{\length}{length}
\def\GHilb{\mathrm{Hilb}^{G}}
\begin{document}

\maketitle

\begin{abstract}
For a variety $X$ of positive characteristic and a non-negative integer $e$,
we define its $e$-th F-blowup to be the universal flattening of 
the $e$-iterated Frobenius of $X$. Thus we have
the sequence (a set labeled by non-negative integers)
of blowups of $X$.
Under some condition, the sequence stabilizes and leads
to a nice (for instance, minimal or crepant) resolution.
For  tame quotient singularities, the sequence leads 
to the $G$-Hilbert scheme.
\end{abstract}

\section{Introduction}

In this paper, we introduce and study a  characteristic-$p$ variant of the higher Nash blowup introduced in \cite{MR2371378}.  Let $X$ be  a $d$-dimensional variety over a perfect field $k$ 
of characteristic $p>0$, $X_\sm$ its smooth locus
 and $F^e : X_e \to X$ the $e$-iterated $k$-linear Frobenius.  
For a point $x \in X_\sm (K)$, the fiber $(F^e)^{-1} (x) \subset (X_{e})_K$ is 
a $0$-dimensional subscheme of length $ q^{d} $ and identified with a $K$-point
of the Hilbert scheme $\Hilb_{q^{d}} (X_e)$ of length $q^{d} $ subschemes of $X_e$.

\begin{defn}
We define the \emph{$e$-th  F-blowup}, $\FB_e (X)$,  to be the closure of the set
\[
\{(F^e)^{-1} (x)  \mid x \in X_\sm \}
\]
 in $\Hilb_{q^{d}} (X_e)$. 
\end{defn}

We will see that there exists a natural morphism $\pi_e:\FB_e (X) \to X$,
which is projective and is an isomorphism exactly over $X_\sm$.
Moreover  $\FB_e (X)$ is isomorphic to
the irreducible component of the relative Hilbert scheme
 $\Hilb _{q^d} (X_e /X)$ that dominates $X$. 
Equivalently it is isomorphic to the universal flattening of  
$F^e : X_e \to X$ or  to  the universal flattening of
 $F^e_* \cO_{X_e}$
(for the universal flattening of a general coherent sheaf, see
\cite{MR1218672,MR0244517,1087.14011}).
The following results can be shown by similar methods as ones in
\cite{MR2371378,math.AG/0610396}:

\begin{prop}\label{prop-intro}
\begin{enumerate}
\item (Propositions \ref{prop-etale}, \ref{prop-completion}, \ref{prop-product}, \ref{prop-smooth} and \ref{prop-field}) For each $e$, the $e$-th F-blowup $\FB_e(X)$ is compatible with \'etale morphisms, completions, products, smooth morphisms and field extensions.
\item (Proposition \ref{prop-separation}) At each point $x \in X$, for $e \gg 0$, 
the $e$-th F-blowup separates
the analytic branches at $x$.
\end{enumerate}
\end{prop}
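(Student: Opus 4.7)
The plan is to reduce to the complete local case via Proposition~\ref{prop-completion} and then argue by ideal containments in $\widehat R := \widehat\cO_{X,x}$. Let $\fp_1,\dots,\fp_r$ denote the minimal primes of $\widehat R^{\red}$, with analytic branches $X_i := V(\fp_i) \subset \widehat X := \Spec\widehat R$ each of dimension $d$, and let $\fm$ be the maximal ideal of $\widehat R$. My goal is to show that for $e \gg 0$ the closed fibre $\pi_e^{-1}(x)$ decomposes as a disjoint union of closed subsets, each lying over a single $X_i$; properness of $\pi_e$ and Stein factorisation then promote this to the formal decomposition of $\FB_e(X) \times_X \widehat X$ witnessing branch separation.

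The argument turns on two closed ideal-containment conditions for $I_Z \subset \widehat R$, the defining ideal of $[Z] \in \pi_e^{-1}(x)$. First, $\FB_e(X)$ is contained in the closed locus of $\Hilb_{q^{d}}(X_e)$ parametrising subschemes lying inside a single fibre of $F^e$---a closed condition satisfied by every smooth fibre $(F^e)^{-1}(y)$---so $Z \subset (F^e)^{-1}(x)$ and hence $I_Z \supset \fm^{[q]}$. Second, if $[Z]$ is a flat limit of $(F^e)^{-1}(y_n)$ for closed $y_n \in X_\sm$ lying on $X_i$ and on no other branch, then $\cO_{X,y_n}$ is a regular local ring coinciding with $\cO_{X_i,y_n}$, so $\fp_i \cdot \cO_{X,y_n} = 0$. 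Unwinding this: for each $f \in \fp_i$ the element $s := \prod_{j \ne i} s_j$ (with $s_j \in \fp_j \setminus \fm_{y_n}$) lies outside $\fm_{y_n}$ and satisfies $sf \in \bigcap_k \fp_k = 0$; since $\widehat R/\fm_{y_n}^{[q]}$ is Artinian local with $s$ a unit there, we conclude $f \in \fm_{y_n}^{[q]}$. Hence $\fp_i \subset I_{(F^e)^{-1}(y_n)}$, a closed condition inherited by the flat limit: $I_Z \supset \fp_i$.

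Now suppose some $[Z]$ arose as a limit from both $X_i$ and $X_j$ with $i \ne j$; combining the two containments gives $I_Z \supset \fm^{[q]} + \fp_i + \fp_j$, whence
\[
q^{d} = \length(\widehat R/I_Z) \le \length(\bar R/\bar\fm^{[q]}),
\]
where $\bar R := \widehat R/(\fp_i + \fp_j)$ has dimension $d' := \dim(X_i \cap X_j) < d$ (distinct irreducible components of pure dimension $d$ meet in a scheme of strictly smaller dimension). Since $\bar\fm^{[q]} \supset \bar\fm^{nq}$ for a fixed number $n$ of generators of $\bar\fm$, Hilbert--Samuel theory yields $\length(\bar R/\bar\fm^{[q]}) = O(q^{d'})$, and thus $q^{d-d'} = O(1)$, absurd for $e \gg 0$. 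Taking $e$ past the finitely many thresholds coming from the pairs $(i,j)$ gives the required fibre-level separation, and hence branch separation. The main subtlety I anticipate is that the derivation of $I_Z \supset \fp_i$ must proceed via the closed Hilbert subscheme $\Hilb_{q^{d}}(X_{i,e}) \subset \Hilb_{q^{d}}(X_e)$ cut out by this ideal condition, rather than by any explicit limit formula---this matters when $X_i$ is itself singular at $x$ and the punctual fibre $\FB_e(X_i) \cap \pi_e^{-1}(x)$ is positive-dimensional.
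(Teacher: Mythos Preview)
Your argument for branch separation is correct and follows the same architecture as the paper's proof of Proposition~\ref{prop-separation}: both obtain the containment $Z \subset x^{[q]}$ (your $I_Z \supset \fm^{[q]}$, the paper's Corollary~\ref{cor-F-nbhd}) and the containment $Z \subset W_i$ (your $I_Z \supset \fp_i$, which the paper gets directly from the definition $\FB_e(\hat X) = \bigcup_i \FB_e(W_i)$), then derive a contradiction by bounding $q^d \le \length\bigl(\bar R/\bar\fm^{[q]}\bigr)$ with $\dim \bar R < d$. The one substantive difference is in how the asymptotic bound is obtained: the paper invokes Monsky's theorem on the Hilbert--Kunz function to get $\length(\cO_A/\fm_A^{[q]}) = O(q^{\dim A})$, whereas you use the elementary inclusion $\bar\fm^{nq} \subset \bar\fm^{[q]}$ (for $n$ generators of $\bar\fm$) together with Hilbert--Samuel. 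Your route is strictly more elementary, since only the crude upper bound is needed here, not the existence of the Hilbert--Kunz multiplicity. One expository point: your paragraph about ``closed $y_n \in X_\sm$ lying on $X_i$'' mixes the global and formal pictures awkwardly (the formal spectrum $\widehat X$ has a single closed point), but you correctly identify at the end that the clean way to get $I_Z \supset \fp_i$ is via the closed subscheme $\Hilb_{q^d}\bigl((W_i)_e\bigr) \subset \Hilb_{q^d}(\hat X_e)$, which is exactly how the paper packages it.
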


Given a variety, we obtain a sequence of blowups
\[
X= \FB_{0}(X), \  \FB_{1}(X), \ \FB_{2}(X), \ \dots . 
\]
It seems natural to ask the following questions on the asymptotic behavior 
of this sequence:

\begin{Q}\label{Q-intro}
\begin{enumerate}
\item \label{Q-smooth} For $e \gg 0$, is $\FB_e (X)$ smooth? 
\item  Does the sequence stabilize?
\item \label{Q-simultaneous} Is the sequence bounded? Namely, does there exists a proper birational
morphism $Y \to X$
such that $Y$ dominates all the $\FB_{e} (X)$, $e \ge 0$?
\end{enumerate}
\end{Q}

We can also ask a variant of the first question: 

\begin{Q}\label{Q-intro2}
Does an iteration of F-blowups
\[
\FB_{e_{1}} (X),  \FB_{e_{2}}(\FB_{e_{1}}(X)),  \FB_{e_{3}}(\FB_{e_{2}}(\FB_{e_{1}}(X))),\dots
\]
lead to a smooth variety?
\end{Q}

 We will give partial answers
to these questions for certain classes of singularities.

\begin{rem}
We can ask the same questions for the higher Nash blowups.
My recent computation seems say that the answers are negative:
The sequence of higher Nash blowups of $A_{3}$-singularity
probably does not lead to a smooth variety or
stabilize either. 
\end{rem}

In dimension one, we will obtain the following affirmative result:

\begin{thm}[Theorem \ref{thm-curve-resolution}]
Let $X$ be a one-dimensional variety in positive characteristic. Then for $e \gg 0$, $\FB_{e} (X) $ is smooth. 
\end{thm}

Now we restrict our attention to (not necessarily normal) toric singularities.
From the compatibility with \'etale morphisms, it is enough to consider 
an affine toric variety $ X=\Spec k[A] $
for a finitely generated submonoid $A \subset M:= \ZZ^d$ which generates $M$ as a group.  Here $k$ is an arbitrary field. 
Then we define a Frobenius-like morphism as follows: 
For $l \in \ZZpos$, the inclusion $A \hookrightarrow (1/l) \cdot A$ induces
the morphism 
\[
F^{(l)} : X_{(l)}:=\Spec k[(1/l)\cdot A] \to X.
\]
We define $\FB_{(l)} (X)$ to be the irreducible component of 
$\Hilb_{l^d} ( X_{(l)}/ X) $ which dominates $X$.
If $k$ is a perfect field of characteristic $p>0$ and $l=p^e$, then $F^{(l)}$
 is the same as the $e$-iterated Frobenius and $\FB_{(l)} (X) =\FB_e (X)$. 

\begin{rem}
Fujino \cite{MR2328817} called the morphism $F^{(l)}$ the $l$-th multiplication map and used it to prove vanishing theorems for toric varieties. 
\end{rem}

We will see that $\FB_{(l)} (X)$ is a (non-normal) toric variety. 
The fan associated to $\FB_{(l)}(X)$ is the Gr\"obner fan of some ideal.
Moreover we can give a description of the coordinate rings of affine charts (Proposition \ref{prop-coord-ring}). 
Our method of computation is similar to ones in \cite{MR2356842,ito-yukari-kinosaki}, where the authors
compute $G$-Hilbert schemes.
Using this explicit description of $\FB_{(l)}(X)$,  we will show the following:

\begin{thm}\label{thm-toric-intro}
Let $X$ be a toric variety.
\begin{enumerate}
\item  (Theorem \ref{thm-stability}) If $X$ is normal, then there exists $l_0 \in \ZZpos$ such that for every $l \in \ZZpos$, 
there exists a natural birational morphism $\FB_{(l_0)}(X) \to \FB_{(l)} X$,
 which is an isomorphism if $l \ge l_0$.
 \item (Theorem \ref{thm-simultaneous}) 
 The sequence of blowups, $\FB_{(1)}(X),\, \FB_{(2)}(X),\dots$,  is bounded. Namely
 there exists a toric proper birational morphism $Y \to X$ which factors
 as $Y \to \FB_{(l)} (X) \to X$ for every $l \in \ZZpos$.
\item \label{1} (Theorem \ref{thm-minimal}) If $X$ is normal and two-dimensional, then for $l \gg 0$,  $\FB_{(l)} (X)$ is the minimal resolution of $X$.
(For a slight generalization, see Remark \ref{rem-minimal}.)
\item (Proposition \ref{prop-isolated}) If $X$ has only an isolated singularity and the normalization $\tilde X$ of $X$ is an affine space, then 
for $l \gg 0$, $\FB_{(l)} (X)$ is the blowup of $\tilde X$ at the origin. 
\end{enumerate}
\end{thm}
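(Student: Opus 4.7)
The plan is to establish a canonical isomorphism $\mathrm{Bl}_0 \tilde X \cong \FB_{(l)}(X)$ for $l$ sufficiently large by producing a morphism in each direction. By Proposition~\ref{prop-intro}(1), the question is local at $0 \in X$, and away from $0$ both sides are canonically identified with $X_\sm$.

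For the direction $\mathrm{Bl}_0 \tilde X \to \FB_{(l)}(X)$, I would use the universal property of $\FB_{(l)}(X)$ as the closure in $\Hilb_{l^d}(X_{(l)})$ of the fibers of $F^{(l)}$ over $X_\sm$: it suffices to exhibit a flat family of length-$l^d$ closed subschemes of $X_{(l)}$ parametrized by $\mathrm{Bl}_0 \tilde X$, extending the tautological family over $X_\sm$. Since $\tilde X = \AA^d$ is smooth, the analogous power map $F^{(l)}_{\tilde X} : \tilde X_{(l)} \to \tilde X$ is flat of degree $l^d$, and the commutativity $\nu \circ F^{(l)}_{\tilde X} = F^{(l)}_X \circ \nu_{(l)}$ gives a finite morphism $\tilde X_{(l)} \to X_{(l)} \times_X \tilde X$ whose image is flat of rank $l^d$ over $\tilde X$ via the second projection. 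Viewing it via the first projection as a family of subschemes of $X_{(l)}$, its length may drop over $0 \in \tilde X$ because $\nu_{(l)}$ is not necessarily a closed immersion on the fiber $F^{(l),-1}_{\tilde X}(0)$; for $l \gg 0$ this deficiency should get resolved after pulling back to $\mathrm{Bl}_0 \tilde X$, with the exceptional $\PP^{d-1}$ parametrizing the length-$l^d$ limit subschemes of $X_{(l)}$ centered at $0$, indexed by tangent directions.

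For the reverse direction, in the toric case I would invoke the Gr\"obner-fan description of $\FB_{(l)}(X)$ (Proposition~\ref{prop-coord-ring}) and stability (Theorem~\ref{thm-stability}): the isolated-singularity hypothesis imposes a finiteness condition on $\NN^d \setminus A$, from which a direct computation should show that the fan stabilizes for $l \gg 0$ to the star subdivision of $\RR^d_{\geq 0}$ at the ray through $(1, 1, \ldots, 1)$, exactly the fan of $\mathrm{Bl}_0 \AA^d$. In the characteristic-$p$ non-toric setting, I would combine compatibility of $\FB_e$ with completions (Proposition~\ref{prop-intro}(1)) with flatness of Frobenius on the smooth normalization $\tilde X$ to argue that any proper birational flattening of $F^e_X$ dominating $X$ is controlled by a blowup of $\tilde X$ concentrated over $0$, forcing $\FB_e(X) = \mathrm{Bl}_0\tilde X$ by minimality of this blowup in a smooth ambient. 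The main obstacle I anticipate is the flatness verification in the construction step: showing that for $l$ beyond a bound depending on the conductor of $\nu$ (which is $\fm_0$-primary by the isolated-singularity hypothesis), the torsion of the pulled-back family is annihilated on $\mathrm{Bl}_0 \tilde X$, together with verifying bijectivity on the exceptional fiber, which in the toric case follows from the fan computation and in the general characteristic-$p$ case would require \'etale descent after completion.
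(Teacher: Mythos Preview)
Your proposal addresses only part~(4), which is fine since the other parts reference separately proved results; but your approach is more roundabout than the paper's and has a couple of genuine gaps.

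The paper's proof of Proposition~\ref{prop-isolated} is a single direct computation, with no two-sided morphism construction. Since $A^c := \ZZ_{\ge 0}^d \setminus A$ is finite (this is exactly what the isolated-singularity hypothesis gives), for $l$ large one has $(1/l)\cdot A^c \subset [0,1)^d$, so each $e_i$ lies in $(1/l)\cdot A$ and $x_i - 1 \in \fa_l$. For a weight $w$ with $w(e_{i_0})$ strictly minimal among the $w(e_i)$, the initial ideal $\initial_w \fa_l$ is then written down explicitly, and Proposition~\ref{prop-coord-ring} yields $k[U_\sigma] = k[x_{i_0},\, x_i x_{i_0}^{-1} : i \ne i_0]$, the $i_0$-th chart of $\mathrm{Bl}_0 \AA^d$. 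That is the whole argument.

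Two specific problems with your outline: First, you invoke Theorem~\ref{thm-stability}, but that theorem requires $X$ normal; here $X$ is not normal (its normalization is $\AA^d$, so normality would make $X$ smooth and the statement vacuous). Second, knowing that the \emph{fan} stabilizes to the star subdivision at $(1,\dots,1)$ is not enough: $\FB_{(l)}(X)$ is in general a non-normal toric variety, so its fan only determines its normalization. One must compute the actual monoid algebras $k[U_\sigma]$ and see that they are polynomial rings, which is exactly what the paper does and what your sketch does not supply. Your forward-direction flat-family construction (pulling back through $\nu_{(l)}$ and hoping torsion vanishes on $\mathrm{Bl}_0\tilde X$) is plausible but left entirely unjustified, and in any case is unnecessary once the coordinate rings are computed.

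For the non-toric positive-characteristic statement, the paper simply notes that the same initial-ideal computation goes through inside $\cO_X^{1/q} \subset k[x_1^{1/q},\dots,x_d^{1/q}]$ once one uses the embedding $\cO_X \hookrightarrow \cO_{\tilde X} = k[x_1,\dots,x_d]$; no minimality or descent argument is used.
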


The F-blowup is also closely related to the $G$-Hilbert scheme introduced
by Ito and Nakamura \cite{MR1420598}.
Suppose that a finite group $G$ with $p \nmid |G|$
effectively acts on a smooth variety $M$. Let $X:=M/G$ be the quotient
variety. The  $G$-Hilbert scheme, denoted $\GHilb (M)$,
is the closure of the set of free orbits in the Hilbert scheme of $M$.

\begin{thm}[Theorem \ref{thm-GtoF-isom}]\label{thm-intro-G}
For each $e$, there exists a projective birational morphism $\GHilb (M) \to \FB_{e}(X)$.
For sufficiently large $e$, the morphism is an isomorphism.  
\end{thm}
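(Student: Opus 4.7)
The plan is to build $\GHilb(M) \to \FB_e(X)$ from the universal $G$-cluster family via Frobenius preimage, and for the isomorphism assertion, to produce an inverse morphism $\FB_e(X) \to \GHilb(M)$ by constructing a flat family of $G$-clusters on $\FB_e(X) \times M$ from the universal Frobenius family. Once both morphisms exist, their compositions are $X$-endomorphisms of the separated $X$-schemes $\GHilb(M)$ and $\FB_e(X)$ that restrict to the identity over the dense open $X_\sm$, and are hence the identity, yielding mutual inverses.

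For the morphism $\GHilb(M) \to \FB_e(X)$: let $Z \subset \GHilb(M) \times M$ be the universal $G$-cluster, $G$-equivariant and flat of degree $|G|$ over $\GHilb(M)$. Since $M$ is smooth, the relative Frobenius $F^e_M \colon M_e \to M$ is flat of degree $q^d$, so the preimage $\tilde Z := (\mathrm{id} \times F^e_M)^{-1}(Z) \subset \GHilb(M) \times M_e$ is $G$-equivariant and flat of degree $q^d|G|$ over $\GHilb(M)$. Since $p \nmid |G|$, the $G$-quotient $\tilde Z / G \subset \GHilb(M) \times X_e$ is a flat family of length-$q^d$ subschemes of $X_e$ over $\GHilb(M)$, giving by the universal property of $\Hilb_{q^d}(X_e / X)$ a morphism $\GHilb(M) \to \Hilb_{q^d}(X_e / X)$. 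Over $X_\sm$ this family coincides with the Frobenius-fiber family defining $\FB_e(X)$, so the image lies in $\FB_e(X)$, yielding the desired projective birational morphism.

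For the inverse at $e \gg 0$: by the moduli description of $\GHilb(M)$ (the closure of free orbits in the relative invariant Hilbert scheme), producing a morphism $\FB_e(X) \to \GHilb(M)$ amounts to constructing a flat family $\cW \subset \FB_e(X) \times M$ of $G$-clusters over $\FB_e(X)$. Starting from the universal family $\cZ \subset \FB_e(X) \times X_e$, one natural candidate is the scheme-theoretic image of $(\mathrm{id} \times \pi_e)^{-1}(\cZ) \subset \FB_e(X) \times M_e$ under $\mathrm{id} \times F^e_M$; this image agrees with the free-orbit family generically, but flatness and the $G$-cluster property over all of $\FB_e(X)$ need justification.

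The main obstacle is precisely this verification: that $\cW$ is flat of rank $|G|$ with $G$-cluster fibers for $e \gg 0$. By the étale and completion compatibilities of Proposition~\ref{prop-intro} and slice arguments for tame actions, the problem reduces to the local linear model $M = V$, $X = V/G$ for a faithful $G$-representation $V$. In this setting $(F^e_X)_* \cO_{X_e} = k[V^{1/q}]^G$ decomposes as a $k[V]^G$-module according to the $G$-character action on the monomial basis $\{x^{\alpha/q}\}_{0 \le \alpha_i < q}$ of $k[V^{1/q}]$ over $k[V]$: each basis element transforms by some character $\chi_\alpha$, and the $G$-invariants of $k[V] \cdot x^{\alpha/q}$ recover the $\chi_\alpha^{-1}$-isotypic component of $k[V] = \pi_* \cO_M$. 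For $e$ large enough that every irreducible $G$-character occurs among the $\chi_\alpha$ (a consequence of faithfulness and tameness), this representation-theoretic stability should underpin the flatness and cluster property of $\cW$. Globalizing these local computations compatibly across stabilizer strata of $X$ and obtaining a uniform threshold on $e$ constitutes the main technical difficulty.
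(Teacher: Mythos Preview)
Your construction of the forward morphism $\GHilb(M)\to\FB_e(X)$ is correct and coincides with the paper's Proposition~\ref{prop-GtoF}: pull the universal $G$-cluster back along the flat $F^e_M$, take the $G$-quotient, and invoke the Hilbert-scheme universal property.

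For the isomorphism assertion, however, your route diverges from the paper's and contains a genuine gap. The paper does \emph{not} construct an inverse morphism. Instead, knowing $\phi:\GHilb(M)\to\FB_e(X)$ is proper birational, it proves directly that $\phi$ is bijective on closed points and injective on tangent spaces, which forces $\phi$ to be an isomorphism. Both verifications reduce, via the stabilizer $H$ at a support point, to a representation-theoretic lemma: for $e\gg 0$ the $H$-set $P_e=\bigoplus_{n_i<1}k\cdot x_1^{n_1}\cdots x_d^{n_d}\subset k[[x_1^{1/q},\dots,x_d^{1/q}]]$ contains every irreducible $H$-representation (a consequence of Bryant's theorem). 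Given two distinct $G$-clusters with the same support, one picks an irreducible $V\subset I_Y$ with $V\cap I_{Y'}=0$, pairs it with a dual irreducible $U\subset P_e$, and extracts a trivial summand of $U\cdot V$ lying in one $H$-invariant ideal but not the other; the tangent argument is parallel. This is short, works uniformly for non-abelian $G$, and never requires producing a flat family over $\FB_e(X)$.

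Your proposed inverse, by contrast, hinges on the flatness of the scheme-theoretic image $\cW$ of $(\mathrm{id}\times\pi_e)^{-1}(\cZ)$ under $\mathrm{id}\times F^e_M$, and you correctly identify this as the crux. But the sketch you give does not close it. Scheme-theoretic image does not commute with base change, so fiberwise descriptions do not automatically yield flatness of $\cW$. Your local analysis is written for characters $\chi_\alpha$, i.e.\ for abelian stabilizers; for general $G$ the monomials $x^{\alpha/q}$ do not span one-dimensional $H$-representations, and the decomposition of $(F^e_X)_*\cO_{X_e}$ you describe breaks down. Even in the abelian case, knowing that all characters appear among the $\chi_\alpha$ tells you something about the module $(F^e_X)_*\cO_{X_e}$ but does not by itself produce a flat length-$|G|$ quotient of $\cO_M$ over $\FB_e(X)$. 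The paper's injectivity-plus-tangent-injectivity argument sidesteps all of this.
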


Thus for every quotient singularity by a tame finite group action,
the answer to the second question of Question \ref{Q-intro} is affirmative.
We have thus obtained a new interpretation of the $G$-Hilbert scheme
as the universal flattening of Frobenius.
We can easily generalize Theorem \ref{thm-intro-G} to Deligne-Mumford
stacks (Theorem \ref{cor-stack}).

Thanks to  Theorem \ref{thm-intro-G}, we deduce nontrivial facts
on the $G$-Hilbert scheme from properties of the F-blowup, and vice versa:
\begin{enumerate}
\item   The $G$-Hilbert scheme depends only on the quotient variety.
\item Since the $G$-Hilbert scheme is not generally normal \cite{MR2356842},
the answer to the desingularization problem is negative.
\item
 If $M=\AA^{2}_{k}$  and 
$G \subset GL(2,k)$,
then the $G$-Hilbert scheme is the minimal resolution of $X$.
This is a slight generalization of results in \cite{MR1916656,MR1714824,MR1815001}
to groups possibly containing reflections.
\item If $M=\AA^{3}_{k}$  and $G \subset SL(3,k)$, then for $e \gg 0$,
$\FB_{e}(X)$ is a crepant resolution, which follows from \cite{MR1824990,MR1838978}.
\end{enumerate}

To simplify the notation and make it consistent with subsequent works  \cite{Yasuda:arXiv:0810.1804,yasuda-monotonicity},
we consider the Frobenius morphism corresponding to 
the inclusion map $\cO_{X} \hookrightarrow \cO_{X}^{1/p^{e}}$
of sheaves in this version of the manuscript, while
we considered $\cO_{X}^{p^{e}} \hookrightarrow \cO_{X}$
in the previous versions. Those who have read the previous versions
have to be careful about the notational changes caused by this. 

New proofs of some results in this article have been found in \cite{Yasuda:arXiv:0810.1804,yasuda-monotonicity}. Compared to them, the proofs in this article
are more primitive.

The article is organized as follows. 
In Section \ref{sec-basic} we establish basic properties of F-blowup and prove the desingularization of a curve. 
In Section \ref{sec-toric} we treat the toric case and prove main results,
which are partial answers to the questions raised above.
In Section \ref{sec-G} we discuss a relation between the $G$-Hilbert
scheme and the F-blowup.

\subsection*{Convention}
A \emph{variety} means a separated integral scheme of finite type
over a field. A \emph{cluster} means a zero-dimensional subscheme.
We always denote by $p$ the characteristic of the base field.
Given a non-negative integer $e$, we denote the $e$-th power $p^{e}$ of $p$ by $q$.
For a scheme $X$ over a perfect field of characteristic $p > 0$,
we write $X_{e} = \cSpec \cO_{X}^{1/q}$. 
The \emph{$e$-iterated $k$-linear Frobenius},
denoted  $F^{e} =F^{e}_{X}:X_{e} \to X$, is the morphism corresponding to the inclusion $\cO_{X} \hookrightarrow \cO_{X}^{1/q}$. 
We often call this simply the \emph{$e$-th Frobenius}.
For a  closed subscheme $Z \subset X$ with defining ideal (sheaf) $I$,
denote by $Z^{[q]}$ the closed subscheme of $X$ defined by
the Frobenius power $I^{[q]}$, which is generated by the $q$-th power of the sections of $I$. 
We often omit  the subscript $k$ of the fiber product $\times_k$. 

\subsection*{Acknowledgments}

This work was mainly done in 2007 when I was a JSPS research fellow (PD) at RIMS, Kyoto University.
 I wish to thank Shigeru Mukai for stimulating conversations and  helpful suggestions.
 I am also indebted to Akira Ishii and Yukari Ito
 for helpful comments concerning the $G$-Hilbert scheme,
  and Ken-ichi Yoshida for ones concerning the Hilbert-Kunz function.
 During this work, I made many computations by CoCoA \cite{CocoaSystem}  and the function ``Toric'' \cite{MR1681344} implemented in it, which were very  suggestive.

\section{Basic properties}\label{sec-basic}

Arguments in this section are similar to ones for the higher Nash blowup
in \cite{MR2371378,math.AG/0610396}.
We work over a perfect field $k$ of characteristic $p > 0$.

\subsection{Construction}

Let $X$ be a variety of dimension $d$ and 
\[
\Gamma \subset (X_\sm)_e \times X_\sm \subset X_e\times X_\sm
\]
 the graph of the $e$-th Frobenius of $X_{\sm}$.  
Since $\Gamma$ is a flat family of clusters in $X_{e}$  over $X_\sm$, 
there exists the corresponding morphism of $k$-schemes,
\[
 \iota : X_\sm \to \Hilb_{q^d} (X_{e}),
\]
which maps a point $x \in X_{\sm}(K)$ to the scheme-theoretic fiber $(F^{e})^{-1}(x) \subset (X_{e})_{K}$.
Note that $(F^{e})^{-1}(x) = ((F^{e})^{-1}(x) _{\red})^{[q]} $.

\begin{lem}
$\iota$ is an immersion.
\end{lem}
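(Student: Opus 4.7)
The plan is to show that $\iota$ is a monomorphism in the category of $k$-schemes, from which standard results will yield the locally closed immersion property. The substantive step will be an injectivity argument based on Kunz's theorem, which ensures the faithful flatness of Frobenius on the smooth locus.

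Given two $k$-morphisms $f, g \colon T \to X_\sm$ with $\iota \circ f = \iota \circ g$, the goal is to conclude $f = g$. By the universal property defining $\iota$ via the graph of $F^e$, these compositions correspond to the flat families of length-$q^d$ clusters $X_e \times_{X, f} T$ and $X_e \times_{X, g} T$, embedded as closed subschemes of $X_e \times_k T$. Locally, writing $X = \Spec A$ and $T = \Spec B$ with associated ring maps $f^*, g^* \colon A \to B$, the relevant closed subscheme of $\Spec(A^{1/q} \otimes_k B)$ is cut out by the ideal $J_f = (a \otimes 1 - 1 \otimes f^*(a) : a \in A)$. The equality of the two families forces $J_f = J_g$, so in the common quotient ring $R := A^{1/q} \otimes_A B$ one has $1 \otimes f^*(a) = 1 \otimes g^*(a)$ for every $a \in A$.

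The crux is to deduce $f^* = g^*$, equivalently that the natural map $B \to R$ is injective. Since $f$ and $g$ factor through $X_\sm$, one may assume $A$ is regular, and Kunz's theorem then gives that $A \hookrightarrow A^{1/q}$ is finite and faithfully flat. Base change preserves faithful flatness, so $B \to R$ is faithfully flat and a fortiori injective, yielding $f^* = g^*$. Hence $\iota$ is a monomorphism.

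To conclude, $\iota$ is a monomorphism of finite type between Noetherian $k$-schemes with separated target, hence quasi-finite and separated; moreover the diagonal $\Delta_\iota$ is an isomorphism, so $\Omega_\iota = 0$ and $\iota$ is unramified. By Zariski's Main Theorem combined with the observation that a finite monomorphism is a closed immersion, $\iota$ is a locally closed immersion, i.e., an immersion in the paper's sense. The principal obstacle is the faithful flatness step; without restricting to $X_\sm$ the requisite injectivity of $B \to R$ would fail.
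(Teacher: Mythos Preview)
Your route to the monomorphism property differs from the paper's and is correct: rather than computing the tangent map $T\iota$ in local coordinates as the paper does, you argue functorially, using Kunz's theorem to see that $A \to A^{1/q}$ is faithfully flat on the regular locus and hence that $B \to A^{1/q} \otimes_A B$ is injective. This is cleaner than the paper's explicit coordinate calculation, and it reaches the same intermediate conclusion (injective on points and on tangent vectors, equivalently a scheme-theoretic monomorphism).

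The final step, however, has a genuine gap, and it is not where you locate the ``principal obstacle.'' A finite-type monomorphism of Noetherian schemes need not be a locally closed immersion, and Zariski's Main Theorem does not close this: in the factorization $\iota = g \circ j$ with $j$ an open immersion and $g$ finite, the finite morphism $g$ has no reason to inherit the monomorphism property from $\iota$, so one cannot conclude that $g$ is a closed immersion. A concrete counterexample is the map $\AA^1_k \setminus \{1\} \to C$ obtained from the normalization $\AA^1_k \to C$ of a nodal cubic by deleting one of the two preimages of the node: it is unramified, universally injective, and of finite type---hence a monomorphism---yet its set-theoretic image is all of $C$, and it cannot be an immersion since no locally closed subscheme of $C$ supported on all of $C$ is smooth. (The paper's own proof invokes the same unjustified implication, phrased as ``injective on closed points and on tangent spaces suffices''; your argument is not weaker than the paper's here, but neither closes the gap as written.)
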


\begin{proof}
Without loss of generality, 
we may suppose that $k$ is algebraically closed.  
Since $\iota$ is clearly injective, it suffices to show that 
for every $x \in X_\sm (k)$, the map of the tangent spaces
\[
 T\iota : T_x X \to T _{\iota (x)} \Hilb _{q^d} (X_{e})
\]
is injective. 
If $x_1,\dots, x_d \in \cO_{X,x}$ are local coordinates of $X$ at $x$,
then   $\Gamma$ is locally defined by the ideal
\[
 \langle x_i \otimes 1- 1\otimes x_i \mid 1 \le i \le d \rangle \subset 
\cO_{X,x}^{1/q} \otimes \cO_{X,x} .
\]
The tangent space $T_x X$ is generated by the tangent vectors
\[
\epsilon _i :  \Spec \cO_{X,x} /\langle  x_i^{2},x_j \mid j \ne i \rangle \hookrightarrow X, \, i=1,\dots,d.
\]
The base change of $\Gamma$ by $\epsilon _i$ is defined by
\[
 \langle x_i \otimes 1- 1\otimes x_i,  1\otimes x_j \mid j \ne i \rangle \subset 
\cO_{X,x}^{1/q} \otimes \cO_{X,x} /\langle  x_i^{2},x_j \mid j \ne i \rangle .
\]
Therefore if we identify $T_{\iota(x)}  \Hilb _{q^d} (X_{e})  $ with 
\[
\Hom ( \langle x_1,\dots, x_d \rangle , \cO^{1/q}_{X,x} / \langle x_1,\dots, x_d\rangle  ),
\]
then $T\iota (\epsilon_i)$ maps $x_i \mapsto -1$ and $x_j \mapsto 0$, $j \ne i$ (see for instance \cite[Proof of Theorem VI-29]{MR1730819}). 
It follows that the $T\iota(\epsilon
_{i})$, $i=1,\dots,d$, are linearly independent, and so $T\iota$ is injective. We have completed the proof.
\end{proof}

\begin{defn}
We define the \emph{$e$-th F-blowup} of $X$, denoted $\FB_e ( X)$, to be the closure of $\iota (X_\sm)$.
\end{defn}

\begin{prop}
The birational map $\FB_e (X) \dasharrow X$ is extended to 
a morphism $\FB_e (X) \to X$.
\end{prop}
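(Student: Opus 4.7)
My plan is to construct $\pi_e$ by descent along the universal cluster. Let $\mathcal{Z} \subset \FB_e(X) \times X_e$ be the pullback of the universal family from $\Hilb_{q^d}(X_e)$, so that $p : \mathcal{Z} \to \FB_e(X)$ is finite and flat of degree $q^d$, and set $\psi : \mathcal{Z} \xrightarrow{\pr_{X_e}} X_e \xrightarrow{F^e} X$. I realize $\pi_e$ as the first projection of the scheme-theoretic image $\mathcal{Z}' \subset \FB_e(X) \times X$ of the proper morphism $(p,\psi) : \mathcal{Z} \to \FB_e(X) \times X$.

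The geometric core is a semi-continuity argument combined with a flat specialization of ideals. Over the dense open $\iota(X_\sm) \subset \FB_e(X)$, each fiber $\mathcal{Z}_{\iota(x)} = (F^e)^{-1}(x)$ is defined by the ideal $\mathfrak{m}_x \mathcal{O}_{X_e,\tilde x}$ and is supported at the unique preimage of $x$, because $F^e$ is a universal homeomorphism over the perfect base $k$. The number of support points of fibers of a flat family of length-$q^d$ clusters is lower semi-continuous --- specializations may merge support points but cannot create new ones --- so the single-support property extends to all of $\FB_e(X)$. The ideal relation $\mathfrak{m}_x \mathcal{O}_{X_e} \subset \mathcal{I}_{\iota(x)}$ (in fact equality at smooth points) then propagates to arbitrary fibers: given a DVR $R$ with $\Spec R \to \FB_e(X)$ sending the generic point into $\iota(X_\sm)$ and the closed point to $t$, the Frobenius-fiber ideal $\mathfrak{M}_R$ agrees with $\mathcal{I}_{\mathcal{Z}_R}$ generically; since $\mathcal{O}_{\mathcal{Z}_R}$ is $R$-flat, the image of $\mathfrak{M}_R$ in $\mathcal{O}_{\mathcal{Z}_R}$ is $R$-torsion and hence vanishes, so $\mathfrak{M}_R \subset \mathcal{I}_{\mathcal{Z}_R}$. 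On the closed fiber this yields $\mathfrak{m}_{y_t} \mathcal{O}_{X_e,\tilde y_t} \subset \mathcal{I}_t$, and so the scheme-theoretic image of $\mathcal{Z}_t$ in $X$ is the reduced point $\{y_t\}$, since the kernel of $\mathcal{O}_{X,y_t} \to \mathcal{O}_{X_e,\tilde y_t}/\mathcal{I}_t$ is $\mathfrak{m}_{y_t}$.

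Granted the above, the first projection $g : \mathcal{Z}' \to \FB_e(X)$ is finite (proper by descent along the surjection $\mathcal{Z} \twoheadrightarrow \mathcal{Z}'$ combined with properness of $p$, with finite fibers inherited from $p$), and every fiber of $g$ is a reduced single point; by Nakayama, $g$ is an isomorphism. Setting $\pi_e := \pr_2 \circ g^{-1} : \FB_e(X) \to X$ then extends the birational map as required. The main obstacle is the valuative step: to define $\mathfrak{M}_R$ one must exhibit the ``support morphism'' $\Spec R \to X$ as an actual scheme morphism, which I would produce by first defining it set-theoretically (via the single-support property of Step~1) and then upgrading it via the valuative criterion of properness applied to a compactification $\bar X$ of $X$ --- any extension of the generic point to $\bar X$ must land in $X$ because the set-theoretic support map does.
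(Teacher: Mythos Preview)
Your route differs substantially from the paper's. The paper works with the graph closure $Y \subset \FB_{e}(X) \times X$ and shows the projection $\phi: Y \to \FB_{e}(X)$ is bijective with injective tangent maps; the engine is that the universal family over $Y$ is the closure of the Frobenius graph over $X_{\sm}$, so its ideal automatically contains $f \otimes 1 - 1 \otimes f$ for every $f \in \cO_{X}$, and this single observation drives both bijectivity (restrict to fibers) and an explicit first-order computation showing $T\phi$ is injective. Your specialization argument ultimately reproves the same containment $\fm_{y_t}\cO_{X_e} \subset \cI_t$ --- the paper records this as the corollary immediately following the proposition --- but via a longer detour through a compactification and the valuative criterion. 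What the paper's approach buys is directness; what yours buys is that it avoids any infinitesimal analysis.

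There is, however, a real gap at the end. From ``the scheme-theoretic image of $\cZ_t$ in $X$ is the reduced point $\{y_t\}$'' you pass to ``every fiber of $g: \cZ' \to \FB_{e}(X)$ is a reduced single point,'' and these are not the same: formation of scheme-theoretic image does not commute with base change, so the fiber $\cZ'_t$ only \emph{surjects} onto the schematic image of $\cZ_t$, with a possible kernel coming from $\mathrm{Tor}_1^{\cO_{\FB_{e}(X)}}(\cO_{\cZ}/\cO_{\cZ'}, k(t))$ that you have not controlled. The repair is short --- since $p_*\cO_{\cZ}$ is locally free and $1$ extends to a local basis $e_1=1, e_2,\dots, e_{q^d}$, write the image of $f \in \cO_X$ as $\sum a_i e_i$; your fiberwise statement gives $a_i(t)=0$ for $i \ge 2$ at every closed point $t$, hence $a_i = 0$ on the reduced variety $\FB_{e}(X)$, so $g_*\cO_{\cZ'} = \cO_{\FB_{e}(X)}$ directly --- but as written your Nakayama step is applied to an unproved claim.
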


\begin{proof}
Let $R:=\FB _{e} (X)$ and $Y \subset R \times X$ be the graph closure of the birational map.
We need to show that the projection $\phi:Y \to R$ is an isomorphism.
Again we may suppose that $k$ is algebraically closed. 
It is easy to see that $\phi$ is bijective.
So it suffices to show that 
for every closed point $y \in Y$,
the map of tangent spaces
\[
 T\phi :T_{y} Y \to T_{\phi(y)} R
\]
is injective. Let $x \in X$ and $r \in R$ be the images of $y$.
Then $\cO_{Y,y}$ is the subalgebra of the common quotient field of $R$ and $X_{e}$ that is generated
by $\cO_{X,x}$ and $\cO_{R,r}$. To obtain a contradiction,
suppose that there exists a nonzero tangent vector at $y$,
\[
\epsilon : \Spec k[t]/\langle t^{2}\rangle \hookrightarrow Y,
\]
which maps to the zero tangent vector at $r$.
Let $\epsilon^{*}:\cO_{Y,y} \twoheadrightarrow k[t]/\langle t^{2} \rangle$ be
the map corresponding to $\epsilon$. 
Since $\epsilon$ maps to a nonzero tangent vector at $x$, 
 there exists $f \in \cO_{X,x}$ 
with $\epsilon^{*}(f)=t$. Let $\cU \subset  X_{e} \times Y$ be the family of clusters
in $X_{e}$ corresponding to $\phi$. Over $X_{\sm} \subset Y $, $\cU$ coincides with
the graph of $ (X_{\sm})_{e} \to X_{\sm}$. So $\cU$ is 
the closure of this graph. Therefore the defining ideal of $\cU$ has the local section
\[
 f \otimes 1 - 1 \otimes f \in\cO^{1/q}_{X,x} \otimes \cO_{Y,y}  .
\]
Then, since $\epsilon^{*}(f)=t$, the pull-back  $\cU_{\epsilon}$ of $\cU$ by $\epsilon$
is defined by an ideal of 
\[
\cO^{1/q}_{X,x}\otimes  k[t]/\langle t^{2} \rangle  =\cO^{1/q}_{X,x}[t]/\langle t^{2}\rangle
 \]
 which contains $f - t$.
Let $Z \subset X_{e}$ be the cluster corresponding to $r$.
Then its defining ideal $I_{Z} \subset \cO_{X,x}^{1/q}$ contains $f$.
If we think of $T_{r} R$ as a subspace of $\Hom (I_{Z},\cO^{1/q}_{X,x}/I_{Z})$,
then $T\phi(\epsilon)$ maps $f $ to $-1 $ (see for instance  \cite[Proof of Theorem VI-29]{MR1730819} again). 
Thus $T\phi(\epsilon)$ is nonzero, a contradiction.  
\end{proof}

 \begin{cor}\label{cor-F-nbhd}
 For every point $Z \in (\FB_e (X))(K)$ with $K/k$ a field extension, 
the cluster $Z \subset X_K := X\otimes _k K$ is set-theoretically one $K$-point and scheme-theoretically contained in the cluster
$(Z_\red )^{[q]}$. 
 \end{cor}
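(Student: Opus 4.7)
The plan is to show that the universal family $\cU \subset \FB_{e}(X) \times X_{e}$ of the Hilbert scheme is already contained in the closed subscheme $\FB_{e}(X) \times_{X} X_{e}$, where the fiber product is formed via $\pi_{e}:\FB_{e}(X) \to X$ and $F^{e}:X_{e} \to X$. Once this is known, both assertions of the corollary follow by pulling back along $Z:\Spec K \to \FB_{e}(X)$, together with a short local identification of the scheme-theoretic fiber of $F^{e}$ with a Frobenius-power cluster.

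To establish $\cU \subset \FB_{e}(X) \times_{X} X_{e}$, I would compare the two morphisms $\pi_{e}\circ p_{1},\ F^{e}\circ p_{2}:\cU \to X$ obtained from the projections. Over the open $X_{\sm} \subset \FB_{e}(X)$, where $\pi_{e}$ is an isomorphism, the fiber of $\cU$ over a point $x$ is by construction $(F^{e})^{-1}(x)$, so $\cU|_{X_{\sm}} = X_{\sm} \times_{X} X_{e}$ and the two morphisms agree there. To extend the equality to all of $\cU$, I would use that $\cU \to \FB_{e}(X)$ is finite flat of degree $q^{d}$ over the integral scheme $\FB_{e}(X)$: every associated point of $\cU$ then maps to the generic point of $\FB_{e}(X)$, which lies in $X_{\sm}$, so $\cU|_{X_{\sm}}$ is scheme-theoretically dense in $\cU$. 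Combined with the separatedness of $X$, this forces the two morphisms to coincide on all of $\cU$, which is equivalent to the desired containment.

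With this in hand, pulling back along $Z:\Spec K \to \FB_{e}(X)$ shows that $Z \subset (X_{e})_{K}$ lies inside the scheme-theoretic fiber $(F^{e})^{-1}(x)$, where $x := \pi_{e}(Z) \in X(K)$. Since $F^{e}$ is a universal homeomorphism, this fiber is supported on the unique $K$-point $z$ lying over $x$, proving the set-theoretic assertion. For the scheme-theoretic containment, I would compute locally at $z$: using $\cO_{X_{e},z}=\cO_{X,x}^{1/q}$ and $\fm_{z}=\fm_{x}^{1/q}$, the ideal $\fm_{z}^{[q]}$ is generated by the $q$-th powers of the $g^{1/q}$ with $g \in \fm_{x}$, that is, by the elements $g \in \fm_{x}$, and hence equals $\fm_{x}\cdot \cO_{X_{e},z}$, which is precisely the defining ideal of $(F^{e})^{-1}(x)$. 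Therefore $(F^{e})^{-1}(x)=z^{[q]}=(Z_{\red})^{[q]}$, and $Z \subset (Z_{\red})^{[q]}$ follows.

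The step I expect to require the most care is the scheme-theoretic (rather than merely topological) density of $\cU|_{X_{\sm}}$ in $\cU$; this is what upgrades the agreement of morphisms on an open dense set to an equality on the whole source, and it really uses the flatness of $\cU$ over the integral base $\FB_{e}(X)$, so that $\cU$ has no embedded components sitting over the non-smooth locus.
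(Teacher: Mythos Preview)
Your proposal is correct and follows essentially the same route as the paper. The paper pulls back the graph $\Gamma$ of $F^{e}$ to $\Gamma' \subset \FB_{e}(X)\times X_{e}$ (which is exactly your $\FB_{e}(X)\times_{X}X_{e}$), asserts $\cU=(\Gamma')_{\red}\subset\Gamma'$, and then reads off both conclusions from the identification $(F^{e})^{-1}(x)=((F^{e})^{-1}(x)_{\red})^{[q]}$; your argument makes the containment $\cU\subset\Gamma'$ explicit via scheme-theoretic density and separatedness rather than via the equality with $(\Gamma')_{\red}$, but the substance is the same.
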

 
 \begin{proof}
 Let $\Gamma \subset  X_{e}\times X $ be the graph of the $e$-th Frobenius.
 Then the fiber of the projection $\Gamma  \to X$ over $x$
 is identical to $(F^{e})^{-1}(x) = ((F^{e})^{-1}(x)_{\red})^{[q]}$.
 Let $\Gamma' \subset   X_{e} \times \FB_{e}(X)$ be the pull-back of $\Gamma$
 and let $\cU \subset  X_{e} \times \FB_{e}(X) $ be the universal family of clusters in $X_e$ over $\FB_{e}(X)$.  
Then we have $\cU = (\Gamma')_{\red}  \subset \Gamma'$, which proves the corollary. 
 \end{proof}
 
\begin{prop}\label{prop-relative}
The $X$-scheme $\FB_e (X)$ is isomorphic to
the irreducible component of the relative Hilbert scheme 
$\Hilb_{q^d}(X_{e}/X )$
that dominates $X$.
\end{prop}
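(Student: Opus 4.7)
The plan is to construct a morphism $f : \FB_{e}(X) \to \Hilb_{q^{d}}(X_{e}/X)$ using the universal property of the relative Hilbert scheme, then show that $f$ is a closed embedding whose image is precisely the irreducible component dominating $X$.

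First I would apply Corollary~\ref{cor-F-nbhd}: the universal family $\cU \subset \FB_{e}(X) \times X_{e}$, obtained by restricting the universal family on $\Hilb_{q^{d}}(X_{e})$, is contained in the pullback $\Gamma'$ of the graph of $F^{e}$ along $\pi_{e} : \FB_{e}(X) \to X$. Since $\Gamma' = \FB_{e}(X) \times_{X} X_{e}$, the family $\cU$ is a flat family over $\FB_{e}(X)$ of length $q^{d}$ clusters contained in the fibers of $F^{e}$. The universal property of the relative Hilbert scheme then yields the desired morphism $f$, whose composition with the natural projection $p : \Hilb_{q^{d}}(X_{e}/X) \to \Hilb_{q^{d}}(X_{e})$ coincides with the defining closed embedding $\FB_{e}(X) \hookrightarrow \Hilb_{q^{d}}(X_{e})$.

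Next I would verify that $f$ is a closed embedding. The projection $p$ is separated, since it factors as a closed embedding into $X \times \Hilb_{q^{d}}(X_{e})$ followed by $\pr_{2}$. Decomposing $f$ as the graph $\Gamma_{f} : \FB_{e}(X) \to \FB_{e}(X) \times_{\Hilb_{q^{d}}(X_{e})} \Hilb_{q^{d}}(X_{e}/X)$ followed by the second projection, one sees that $\Gamma_{f}$ is a closed embedding by separatedness of $p$, while the second projection is the base change along $p$ of the closed embedding $p \circ f$, hence is itself a closed embedding. Composing the two, $f$ is a closed embedding.

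Finally I would identify the image. Over $X_{\sm}$, every fiber of $F^{e}$ is a single length-$q^{d}$ cluster, so $\Hilb_{q^{d}}(X_{e}/X) \times_{X} X_{\sm} \simeq X_{\sm}$ and $f$ restricts there to the identity. Since $\FB_{e}(X)$ is integral and dominates $X$ via $\pi_{e}$, its image under the closed embedding $f$ is an integral closed subscheme of $\Hilb_{q^{d}}(X_{e}/X)$ containing $X_{\sm}$ as a dense open, hence equals the unique irreducible component dominating $X$. The only nontrivial ingredient is the scheme-theoretic inclusion $\cU \subset \Gamma'$ that unlocks the universal property, and this is exactly what Corollary~\ref{cor-F-nbhd} provides; the remainder is a diagram chase.
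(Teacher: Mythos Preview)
Your argument is correct and rests on the same geometric input as the paper's: the universal family $\cU$ over $\FB_{e}(X)$ lands inside the pulled-back graph $\Gamma'$, which is exactly the content of Corollary~\ref{cor-F-nbhd}. The packaging, however, is different. The paper embeds both $\FB_{e}(X)$ (via its graph $G \subset \FB_{e}(X)\times X$) and $\Hilb_{q^{d}}(X_{e}/X)\cong \Hilb_{q^{d}}(\Gamma/X)$ into the common ambient $\Hilb_{q^{d}}(X_{e}\times X/X)=\Hilb_{q^{d}}(X_{e})\times X$, and simply observes that the two determine the same closed subscheme there. You instead construct $f$ directly from the universal property, then factor it as $\pr_{2}\circ\Gamma_{f}$ and use separatedness of $p$ together with base change of the defining closed embedding to conclude that $f$ is closed. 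Your route is slightly more explicit about why the comparison map is a closed immersion, while the paper's route makes the identification in one step by working in the ambient relative Hilbert scheme; neither requires any idea the other lacks.
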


\begin{proof}
Consider  the graph of $ \FB_e (X) \to X$,
\[
G \subset \FB_e (X) \times X \subset \Hilb_{q^d} (X_{e}) \times X = \Hilb_{q^d} (X_{e} \times X /X),
\]
which is, by definition, isomorphic to $\FB_e (X)$ as an $X$-scheme. 
If $\Gamma \subset X_{e} \times X$ is the graph of the $e$-th Frobenius $X_{e} \to X$,
then we have
\[
\Hilb_{q^{d}}(X_{e}/X)\cong \Hilb_{q^d} (\Gamma / X) \hookrightarrow \Hilb_{q^d} (X_{e} \times X/X) .
\]
Now $G$ and the irreducible component of $\Hilb_{q^d} (X_{e}/X)$ 
determines the same subscheme of $\Hilb_{q^d} (X_{e} \times X /X)$, which proves the proposition.
\end{proof}

\begin{cor}\label{cor-inclusion}
 The morphism $\FB_e (X) \to X$ is projective and birational, moreover an isomorphism over $X_{\sm}$.
\end{cor}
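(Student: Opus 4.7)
My plan is to reduce immediately to Proposition \ref{prop-relative}, which identifies $\FB_e(X)$ with the irreducible component of the relative Hilbert scheme $H := \Hilb_{q^d}(X_e/X)$ dominating $X$. Granted this, it suffices to establish (a) that $H \to X$ is projective, and (b) that $H \to X$ is an isomorphism over $X_\sm$; part (a) gives projectivity of $\FB_e(X) \to X$, while (b) gives that $\FB_e(X) \to X$ is an isomorphism over $X_\sm$, which in particular makes it birational.

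For (a), I would note that the $e$-th Frobenius corresponds to the inclusion $\cO_X \hookrightarrow \cO_X^{1/q}$, which is a module-finite extension of $\cO_X$-algebras since $X$ is of finite type over a perfect field. Hence $F^e : X_e \to X$ is finite, and in particular projective, so the relative Hilbert scheme $H \to X$ is projective by the standard construction, and so is its closed subscheme $\FB_e(X) \to X$.

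For (b), the main input is Kunz's theorem: $F^e$ is flat precisely over $X_\sm$, and there it is finite flat of rank $q^d$. For any $x \in X_\sm(K)$ the scheme-theoretic fiber $(F^e)^{-1}(x)$ is a cluster in $(X_e)_K$ of length exactly $q^d$, and the only length-$q^d$ closed subscheme of such a cluster is the cluster itself (the surjection of local rings must be an isomorphism). Hence the fiber of $H \to X$ at every $K$-point of $X_\sm$ is a single point, with trivial tangent space $\Hom(0, \cO_{(F^e)^{-1}(x)}) = 0$, so the fiber is reduced. On the other hand, flatness of $F^e$ over $X_\sm$ equips $X_e \times_X X_\sm$ with the structure of a flat family of length-$q^d$ clusters, producing a section $X_\sm \to H$ of $H \to X$ — this is exactly the morphism $\iota$ used to define $\FB_e(X)$. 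Combining the section with the fiberwise analysis, $H \to X$ is an isomorphism over $X_\sm$; in particular $H|_{X_\sm}$ is already irreducible and therefore coincides with $\FB_e(X)|_{X_\sm}$, yielding the desired conclusion.

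The only delicate point I see is the last sentence of step (b): one must verify that the scheme-theoretic preimage of $X_\sm$ in $H$ (or in $\FB_e(X)$) really is an isomorphism to $X_\sm$ and not merely a bijection of points. The argument above reduces this to the fiberwise reducedness together with the existence of the section $\iota$, which forces the projection to be a closed immersion that is a bijection with trivial relative tangent spaces, hence an isomorphism; none of this is hard, but it is the place where one has to be careful to distinguish set-theoretic from scheme-theoretic statements.
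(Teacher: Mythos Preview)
Your proof is correct and follows exactly the route the paper takes: the paper's proof is the single sentence ``This follows from Proposition \ref{prop-relative},'' and you have simply supplied the details that the author left to the reader---namely that the relative Hilbert scheme of a finite morphism is projective over the base, and that over $X_{\sm}$ the only length-$q^d$ quotient of $\cO_{X}^{1/q}$ is the whole thing. One minor remark: invoking Kunz's theorem for the flatness of $F^e$ over $X_{\sm}$ is more than you need, since that is the elementary direction (local freeness of $\cO_{X}^{1/q}$ over a regular ring is immediate from the monomial basis); Kunz is only required for the converse, which the paper uses later in Proposition \ref{prop-Kunz}.
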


\begin{proof}
This follows from Proposition \ref{prop-relative}.
\end{proof}

The relative Hilbert scheme $\Hilb_{q^d}(X_{e}/X)$
is canonically isomorphic to the Quot scheme $\mathrm{Quot}_{q^d} ( \cO_X ^{1/q})$ of the coherent $\cO_{X}$-module $\cO_X^{1/q}$. 
Hence $\FB_e (X)$ is  the universal flattening of $\cO_{X}^{1/q}$.
For the universal flattening of a general coherent sheaf, see \cite{MR1218672,MR0244517,1087.14011}.

\subsection{Kunz's criterion for smoothness}

By construction, if $X$ is smooth, for every $e >0$, the morphism $\FB_e (X) \to X$ is an isomorphism. In fact, the converse is also true. 
Kunz's criterion \cite{MR0252389} says that
the following are equivalent:
\begin{enumerate}
\item $X$ is smooth.
\item For some $e \in \ZZpos$, $F^e_{X} $ is flat.
\item For every $e \in \ZZpos$, $F^e_{X} $ is flat.
\end{enumerate}
Since $\FB_e (X) \to X$ is a flattening of $F^e_{X}:X \to X$,
if $F^e_{X}$ is not flat, then $\FB_e (X) \to X$ cannot be an isomorphism. 
Thus we have:

\begin{prop}\label{prop-Kunz}
The following are equivalent:
\begin{enumerate}
\item $X$ is smooth.
\item For some $e \in \ZZpos$, $\FB_e (X) \to X$ is an isomorphism.
\item For every $e \in \ZZpos$, $\FB_e (X) \to X$ is an isomorphism.
\end{enumerate}
\end{prop}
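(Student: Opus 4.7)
The plan is a short cycle $(1) \Rightarrow (3) \Rightarrow (2) \Rightarrow (1)$ in which the only nontrivial step uses Kunz's criterion on flatness of Frobenius.

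For $(1) \Rightarrow (3)$ the argument is essentially by definition: if $X$ is smooth then $X_\sm = X$, and Corollary \ref{cor-inclusion} already guarantees that $\FB_e(X) \to X$ is an isomorphism over $X_\sm$, hence everywhere. The implication $(3) \Rightarrow (2)$ is trivial.

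The content lies in $(2) \Rightarrow (1)$, which I would carry out following the sketch given just above the statement. By Proposition \ref{prop-relative} together with the identification of the relative Hilbert scheme with the Quot scheme, $\FB_e(X) \to X$ is the universal flattening of the coherent sheaf $F^e_* \cO_{X_e} = \cO_X^{1/q}$. If this morphism is an isomorphism, then the sheaf $\cO_X^{1/q}$ is already flat on $X$ itself, that is, $F^e_X$ is flat. Kunz's criterion \cite{MR0252389} then forces $X$ to be smooth, closing the cycle.

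The main (and only real) obstacle is the invocation of Kunz's theorem, which is an external input; the rest is a formal rewriting via the universal property of flattening. The only care required in the last step is to use that universal property in the correct direction (the universal flattening is an isomorphism iff the sheaf is already flat over the base), which is immediate from its definition.
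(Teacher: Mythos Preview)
Your proposal is correct and follows essentially the same approach as the paper: both use Corollary \ref{cor-inclusion} (or equivalently the construction) for $(1)\Rightarrow(3)$, and for $(2)\Rightarrow(1)$ both invoke Kunz's criterion together with the fact that $\FB_e(X)\to X$ is the universal flattening of $F^e_*\cO_{X_e}$, so that if the map is an isomorphism then $F^e$ is already flat. The paper phrases the last step contrapositively (``if $F^e$ is not flat then the flattening cannot be an isomorphism''), but this is only a cosmetic difference.
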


This gives a partial answer to Question \ref{Q-intro2}:

\begin{cor}
Let $X$ be a one-dimensional variety and
$e_{i}$, $i \in \ZZpos$, an infinite sequence of positive integers.
Then the sequence, $\FB_{e_{1}} (X)$, $ \FB_{e_{2}}(\FB_{e_{1}}(X))$, $ \FB_{e_{3}}(\FB_{e_{2}}(\FB_{e_{1}}(X))),\dots$,
leads to a smooth variety.
\end{cor}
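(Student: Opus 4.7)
The plan is to attach to each one-dimensional variety a non-negative integer invariant $\delta$ that vanishes exactly on smooth varieties and strictly decreases at every non-trivial step of the sequence. Write $Y_{0}:=X$ and $Y_{i}:=\FB_{e_{i}}(Y_{i-1})$. By Corollary \ref{cor-inclusion}, each structural morphism $f_{i}\colon Y_{i}\to Y_{i-1}$ is projective and birational; since the F-blowup is by construction an irreducible closed subscheme of a Hilbert scheme, every $Y_{i}$ is itself a one-dimensional variety. In particular all the $Y_{i}$ are birational to $X$ and share a common normalization $\tilde{X}$, which is smooth because a normal one-dimensional variety over a perfect field is regular.

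For each $Y_{i}$ put
\[
\delta(Y_{i}):=\sum_{y}\dim_{k}\bigl(\tilde{\cO}_{Y_{i},y}/\cO_{Y_{i},y}\bigr),
\]
the sum running over the (finitely many) singular closed points $y\in Y_{i}$ and $\tilde{\cO}_{Y_{i},y}$ denoting the integral closure of $\cO_{Y_{i},y}$ in its total ring of fractions. Each summand is a finite non-negative integer because $Y_{i}$ is excellent, and $\delta(Y_{i})=0$ if and only if $Y_{i}$ is smooth.

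I claim $\delta(Y_{i})<\delta(Y_{i-1})$ whenever $Y_{i-1}$ is singular. Since $Y_{i-1}$ and $Y_{i}$ are reduced curves with the same normalization $\tilde{X}$, one has a chain of $\cO_{Y_{i-1}}$-subalgebras
\[
\cO_{Y_{i-1}}\hookrightarrow(f_{i})_{\ast}\cO_{Y_{i}}\hookrightarrow\nu_{i-1,\ast}\cO_{\tilde{X}},
\]
where $\nu_{i-1}\colon\tilde{X}\to Y_{i-1}$ is the normalization. Additivity of $\dim_{k}$ on the induced short exact sequence of coherent sheaves on $Y_{i-1}$ yields
$\delta(Y_{i-1})=\delta(Y_{i})+\dim_{k}\Gamma\bigl(Y_{i-1},(f_{i})_{\ast}\cO_{Y_{i}}/\cO_{Y_{i-1}}\bigr)$,
so $\delta$ is non-increasing and drops strictly precisely when $f_{i}$ is not an isomorphism. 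By Kunz's criterion (Proposition \ref{prop-Kunz}), this is equivalent to $Y_{i-1}$ being singular.

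Since $\delta$ takes non-negative integer values, $Y_{N}$ must be smooth for some $N$, and then Proposition \ref{prop-Kunz} forces every subsequent $f_{N+j}$ to be an isomorphism, so the sequence has led to the smooth variety $Y_{N}$. The only technical point is the identification behind the displayed length formula; this is routine once one notes that a proper birational morphism between reduced curves is finite, so $(f_{i})_{\ast}$ is exact on coherent sheaves and $(\nu_{i-1,\ast}\cO_{\tilde{X}})/((f_{i})_{\ast}\cO_{Y_{i}})$ identifies with $(f_{i})_{\ast}(\nu_{i,\ast}\cO_{\tilde{X}}/\cO_{Y_{i}})$, whose global sections have $k$-dimension $\delta(Y_{i})$.
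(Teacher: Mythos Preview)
Your proof is correct. The paper states this corollary without proof, presenting it as an immediate consequence of Proposition~\ref{prop-Kunz}; the implicit reasoning is exactly the classical fact you spell out, namely that in dimension one any non-trivial proper birational modification strictly decreases the $\delta$-invariant, so by Kunz's criterion the sequence cannot fail to reach a smooth curve.
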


\subsection{Compatibility}

\subsubsection{\'Etale morphism}

\begin{prop}\label{prop-etale}
If $Y \to X$ is an \'etale morphism of varieties,
then there exists a natural isomorphism
$\FB_e (Y) \cong \FB_e (X) \times_{X} Y$. 
\end{prop}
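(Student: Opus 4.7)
The strategy rests on two ingredients: the compatibility of absolute Frobenius with \'etale base change, and the description of $\FB_e$ as the component of the relative Hilbert scheme given by Proposition \ref{prop-relative}. First I would verify that the Frobenius square with vertices $Y_e$, $X_e$, $Y$, $X$ is cartesian. Locally, with $X = \Spec A$, $Y = \Spec B$ and $A \to B$ \'etale, this is the classical identity $A^{1/q} \otimes_A B \cong B^{1/q}$, which reflects the fact that the relative Frobenius is an isomorphism for \'etale morphisms. Applying the base-change property of relative Hilbert schemes then yields a canonical isomorphism $\Hilb_{q^d}(Y_e/Y) \cong \Hilb_{q^d}(X_e/X) \times_X Y$, under which $\FB_e(X) \times_X Y$ becomes a closed subscheme of $\Hilb_{q^d}(Y_e/Y)$.

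Next I would compare this closed subscheme with $\FB_e(Y)$. Since $\FB_e(X) \to X$ is an isomorphism over $X_\sm$ by Corollary \ref{cor-inclusion}, and $Y_\sm = g^{-1}(X_\sm)$ because $g$ is \'etale, the pulled-back scheme restricts over $Y_\sm$ to the graph of the \'etale Frobenius, i.e.\ to $\iota_Y(Y_\sm)$; in particular it contains $\FB_e(Y) = \overline{\iota_Y(Y_\sm)}$. For the reverse inclusion it suffices to show that $\FB_e(X) \times_X Y$ is integral, so that the open subset $\iota_Y(Y_\sm)$ is dense in it.

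The integrality is the main technical point, and it follows from flatness of $g$. Since $\FB_e(X)$ is integral, its only associated point is the generic one; under flat base change along $g$, every associated point of $\FB_e(X) \times_X Y$ therefore lies above the generic point of $X$. Because $\FB_e(X) \to X$ is birational, its generic fiber is $\Spec \kappa(X)$, and tensoring with $\kappa(Y)$ produces $\Spec \kappa(Y)$, a single reduced point. Hence $\FB_e(X) \times_X Y$ has a unique associated point, whose local ring is a field, so it is reduced and irreducible. The main obstacle is precisely this last control of associated points after base change; the naturality of the resulting isomorphism is immediate from the construction through the relative Hilbert scheme.
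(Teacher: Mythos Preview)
Your proposal is correct and follows essentially the same route as the paper: the cartesian square $Y_e \cong Y \times_X X_e$ together with base change for relative Hilbert schemes gives $\Hilb_{q^d}(Y_e/Y) \cong \Hilb_{q^d}(X_e/X)\times_X Y$, after which one identifies the dominating components. The paper compresses this last identification into the phrase ``and the proposition follows,'' whereas you spell it out via the integrality of $\FB_e(X)\times_X Y$; your associated-points argument is valid (and could be replaced by the one-line observation that scheme-theoretic closure commutes with flat base change), but it is the same proof.
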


\begin{proof}
We have $Y_{e} \cong  Y \times _{X} X_{e} $.
Hence   $\Hilb_{q^d} (Y_{e}/Y) \cong \Hilb _{q^d}(X_{e} /X ) \times_{X} Y$
and the proposition follows.
\end{proof}

\subsubsection{Completion}

Let $X$ be a variety, $K/k$ a field extension, $x \in X(K)$ a $K$-point
and $\hat X := \Spec \hat \cO_{X_K,x}$ the completion of $X_K= X \otimes_k K$ at $x$.

\begin{defn}
We define $\FB_e (\hat X)$ to be the union of those irreducible components
of $\Hilb _{q^d} (\hat X _{e} /\hat X)$ that dominate an irreducible component
of $\hat X$. For an irreducible component $W \subset \hat X$,
we define $\FB_e (W)$ to be the irreducible component of
$\Hilb _{q^d} (\hat X _{e} /X)$ that dominates $W$.
\end{defn}

By definition, if $W_i$, $1 \le i \le l$, are the irreducible components of $\hat X$,
then $\FB_e (\hat X) = \bigcup _{i=1}^l \FB_e (W_i)$. 
It is not generally a disjoint union as in the following example.

\begin{ex}
Suppose that  $\hat X$ is of dimension one and has two regular irreducible components $W_1,\, W_2 (\cong \Spec k[[t]])$ intersecting non-transversally.  
If $p=2$, then $\FB_1(W_1)$ and $\FB_1(W_2)$ share the closed point,
which corresponds to the common subscheme of $W_1$ and $W_2$,
\[
W_1 \supset \Spec k[t]/\langle t^2 \rangle \subset W_2.
\]
\end{ex}

\begin{prop}\label{prop-completion}
We have a natural isomorphism 
\[
 \FB_e (\hat X) \cong \FB_e (X) \times _{X} \hat X.
\]
\end{prop}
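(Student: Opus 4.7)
The plan is to reduce the statement to the base-change compatibility of the relative Hilbert scheme, and then match irreducible components on the two sides. Writing $\hat X_{e}=\hat X\times_{X}X_{e}$ (which is how the $e$-th Frobenius of $\hat X$ is inherited from that of $X$), we obtain the canonical identification
\[
\Hilb_{q^{d}}(\hat X_{e}/\hat X)\cong \Hilb_{q^{d}}(X_{e}/X)\times_{X}\hat X.
\]
Under this identification, $\FB_{e}(X)\times_{X}\hat X$ embeds as a closed subscheme of $\Hilb_{q^{d}}(\hat X_{e}/\hat X)$ via Proposition \ref{prop-relative}.

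Next, the morphism $\hat X \to X$ factors as $\hat X\to X_{K}\to X$, both of which are flat (a completion of a Noetherian local ring followed by a base change of a field extension). Therefore $X_{\sm}\times_{X}\hat X$ is a dense open subset of $\hat X$ and meets every analytic branch $W_{i}\subset \hat X$ in a nonempty open. By Corollary \ref{cor-inclusion}, $\FB_{e}(X)\to X$ is an isomorphism over $X_{\sm}$, so after base change the map $\FB_{e}(X)\times_{X}\hat X \to \hat X$ is an isomorphism over this dense open of $\hat X$. In particular, the irreducible components of $\FB_{e}(X)\times_{X}\hat X$ are in natural bijection with the branches $W_{i}$, and each component dominates its corresponding $W_{i}$.

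To conclude, I would check that the component of $\FB_{e}(X)\times_{X}\hat X$ lying over $W_{i}$ coincides with $\FB_{e}(W_{i})$ inside $\Hilb_{q^{d}}(\hat X_{e}/\hat X)$. This component is an irreducible closed subscheme of $\Hilb_{q^{d}}(\hat X_{e}/\hat X)$ dominating $W_{i}$, so it sits inside some irreducible component of $\Hilb_{q^{d}}(\hat X_{e}/\hat X)$ dominating $W_{i}$; equality follows by birationality with $W_{i}$. Uniqueness of such a dominating component, which is what the notation $\FB_{e}(W_{i})$ presumes, comes from the fact that over the smooth locus of $W_{i}$ (contained in $X_{\sm}\times_{X}\hat X$) the relative Hilbert scheme agrees with the graph of Frobenius and is thus uniquely determined. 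Taking the union over $i$ gives the desired isomorphism $\FB_{e}(X)\times_{X}\hat X\cong \FB_{e}(\hat X)$.

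The main obstacle I expect is the careful verification of the base-change identification $\hat X_{e}\cong \hat X\times_{X}X_{e}$ -- the $e$-th Frobenius is $k$-linear while $\hat X$ lives over the possibly non-perfect extension $K/k$ -- together with the compatibility of scheme-theoretic closure (in the definition of $\FB_{e}(X)$) with this flat base change. Once these two compatibilities are in place, the identification of irreducible components is formal.
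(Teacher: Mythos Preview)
Your proposal is correct and follows essentially the same approach as the paper: reduce to the base-change compatibility of the relative Hilbert scheme via the identification $\hat X_{e}\cong \hat X\times_{X}X_{e}$, exactly as in the \'etale case (Proposition~\ref{prop-etale}). The paper's proof is a one-line reference to that argument; your extra care in matching the irreducible components over the branches $W_{i}$ is a reasonable fleshing-out of what the paper leaves implicit in ``and the proposition follows.''
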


\begin{proof}
The method of proof is the same as in the proof of Proposition \ref{prop-etale}.
\end{proof}

\subsubsection{Product}

\begin{prop}\label{prop-product}
For varieties $X$ and $Y$, 
there exists a natural isomorphism $\FB_e (X \times Y) \cong \FB_e (X) \times \FB_e (Y)$.
\end{prop}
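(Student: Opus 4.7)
Since $k$ is perfect, $(X \times Y)_e = X_e \times Y_e$ and hence $\cO_{X \times Y}^{1/q} = \cO_X^{1/q} \otimes_k \cO_Y^{1/q}$. Set $d_X := \dim X$ and $d_Y := \dim Y$. The plan is to construct a natural morphism $\phi : \FB_e(X) \times \FB_e(Y) \to \FB_e(X \times Y)$ from the product of the universal families, and then to show it is an isomorphism by verifying properness, bijectivity on points, and injectivity on tangent spaces.

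Let $\cU_X \subset \FB_e(X) \times X_e$ and $\cU_Y \subset \FB_e(Y) \times Y_e$ denote the universal families. The product $\cU_X \times \cU_Y$, viewed as a subscheme of $(\FB_e(X) \times \FB_e(Y)) \times (X_e \times Y_e)$, is flat of relative length $q^{d_X+d_Y}$ over $\FB_e(X) \times \FB_e(Y)$ and compatible with the natural map to $X \times Y$. By the universal property of the relative Hilbert scheme, this yields a morphism to $\Hilb_{q^{d_X+d_Y}}((X \times Y)_e/(X \times Y))$. Over $(X \times Y)_\sm = X_\sm \times Y_\sm$ the family coincides with the graph of the $e$-th Frobenius of $X \times Y$, so by Proposition \ref{prop-relative} the image lies in $\FB_e(X \times Y)$.

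Both source and target of $\phi$ are projective over $X \times Y$, so $\phi$ is proper. For injectivity on $K$-points, the image of $(s, t)$ is the product cluster $Z_s \times_K Z_t$, and since $\cO_{Z_s} \otimes_K \cO_{Z_t}$ is free over both factors, one recovers $Z_s$ and $Z_t$ as the scheme-theoretic images under the two projections. Surjectivity follows because $\phi$ is closed and its image contains the dense subset $\iota(X_\sm \times Y_\sm)$, whose closure is $\FB_e(X \times Y)$.

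The main obstacle is injectivity of $T\phi$. A tangent vector at $(s, t)$ is a pair $(\alpha, \beta) \in \Hom(I_{Z_s}, \cO_{Z_s}) \oplus \Hom(I_{Z_t}, \cO_{Z_t})$, and its image in $\Hom(I_{Z_s \times Z_t}, \cO_{Z_s \times Z_t})$ is the $\cO_{X_e \times Y_e}$-linear map sending $f \otimes 1 \mapsto \alpha(f) \otimes 1$ for $f \in I_{Z_s}$ and $1 \otimes g \mapsto 1 \otimes \beta(g)$ for $g \in I_{Z_t}$. Vanishing of this map forces $\alpha = \beta = 0$ by the same freeness argument as above. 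Thus $\phi$ is unramified, proper, and bijective, hence a closed immersion; since $\FB_e(X \times Y)$ is reduced (being the closure of a subset of a Hilbert scheme), $\phi$ is an isomorphism.
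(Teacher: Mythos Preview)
Your proof is correct and follows essentially the same approach as the paper: both construct the morphism from the product of the universal families, establish properness and bijectivity (the paper likewise recovers $V$ and $W$ from $V\times W$ via the projections), and then prove injectivity on tangent spaces. The only cosmetic differences are that the paper reduces to algebraically closed $k$ and argues with first-order families and their projections, whereas you work directly with the $\Hom$-description of tangent spaces and conclude via ``unramified $+$ proper $+$ bijective $\Rightarrow$ closed immersion, reduced target $\Rightarrow$ isomorphism''; these are equivalent packagings of the same argument.
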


\begin{proof}
We first prove that there exists a morphism
\[
\psi: \FB_e (X) \times \FB_e (Y) \to \FB_e (X \times Y),\ (V,W) \mapsto V \times W.
\]
Let $ \cV \subset  X_{e} \times \FB_{e}(X) $ and $\cW \subset   Y_{e} \times \FB_{e}(Y)$
be the universal families over $ \FB_{e}(X)$ and $ \FB_{e}(Y)$ respectively. 
Then we consider their product 
\[
\cV \times \cW \subset  X_{e} \times Y_{e} \times \FB_{e}(X) \times \FB_{e}(Y)  ,
\]
which we regard as a family of clusters in $X_{e} \times Y_{e}$ over $\FB_{e}(X) \times \FB_{e}(Y)$.
This family is flat because every fiber has length $q^{\dim X + \dim Y}$.
Moreover the restriction of this over  $\FB_{e}(X_{\sm}) \times \FB_{e}(Y_{\sm})$ is 
identical to the graph of the $e$-th Frobenius of $X\times Y$,
\[
X_{e} \times Y_{e} =(X \times Y)_{e} \to X \times Y.
\]
Hence we have the desired morphism $\psi$.

To show that $\psi$ is an isomorphism, we may and shall assume that $k$ is algebraically closed again.
Being proper and birational, $\psi$ is surjective.
Thus every closed point $Z \in \FB _e (X \times Y)$ is of the from $V \times W$.
If $\pr_1 :X \times Y \to X$
and $\pr _2 : X\times Y \to Y$ denote the projections,
then $\pr_1 (Z) =V$ and $\pr_2 (Z) =W$.
Hence $\psi$ is bijective. 

Let 
\[
\epsilon: \Spec k[t]/\langle t^{ 2 }\rangle \to  \FB_{e}(X) \times \FB_{e} (Y)
\] 
be a nonzero tangent vector
and $\epsilon _{X}$ and $\epsilon_{Y}$ its images on $\FB_{e}(X)$ and $\FB_{e}(Y)$ respectively.
Then $ \epsilon _{X} $ or  $\epsilon_{Y}$, say $\epsilon_{X}$ is nonzero. 
Then we have an equality of the induced first order families,
\[
(\cV \times \cW)_{\epsilon} = \cV_{\epsilon_{X}} \times_{\Spec k[t]/\langle t^{2} \rangle } \cW_{\epsilon_{Y}}.
\]
Hence  $\cV_{\epsilon_{X}}$, which is a nontrivial family, is
the image of $(\cV \times \cW)_{\epsilon}$
by the projection. This shows that $(\cV \times \cW)_{\epsilon}  $
is also a nontrivial family and $T\psi(\epsilon)$ is nonzero.
Thus the tangent maps of $\psi$ are injective, which completes the proof. 
\end{proof}

\subsubsection{Smooth morphism}

\begin{prop}\label{prop-smooth}
Let $Y \to X$ be a smooth morphism of varieties.
Then we have a natural isomorphism $\FB_e (Y) \cong \FB_e (X) \times _{X} Y$.
\end{prop}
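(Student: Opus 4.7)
The plan is to reduce to the previously established compatibilities (Propositions \ref{prop-etale}, \ref{prop-product}, \ref{prop-Kunz}) via a local étale factorization of the smooth morphism $Y \to X$.

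First, I would localize on $Y$. Since $Y \to X$ is smooth of some relative dimension $n$, every point of $Y$ admits an open neighborhood $U \subset Y$ together with an étale $X$-morphism $U \to X \times \AA^{n}$. Since both the F-blowup and the fiber product $\FB_e(X) \times_X Y$ are local constructions on $Y$ (the former because F-blowup commutes with restriction to open subsets, and in particular with étale morphisms by Proposition \ref{prop-etale}), it suffices to construct canonical isomorphisms on each such $U$ and then glue.

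Next, on such a $U$, I would combine three ingredients. By the product formula (Proposition \ref{prop-product}),
\[
\FB_e(X \times \AA^{n}) \cong \FB_e(X) \times \FB_e(\AA^{n}).
\]
Since $\AA^{n}$ is smooth, Kunz's criterion (Proposition \ref{prop-Kunz}) gives $\FB_e(\AA^{n}) \cong \AA^{n}$, so
\[
\FB_e(X \times \AA^{n}) \cong \FB_e(X) \times \AA^{n} \cong \FB_e(X) \times_X (X \times \AA^{n}).
\]
Applying étale compatibility (Proposition \ref{prop-etale}) to the étale morphism $U \to X \times \AA^{n}$,
\[
\FB_e(U) \cong \FB_e(X \times \AA^{n}) \times_{X \times \AA^{n}} U \cong \FB_e(X) \times_X U.
\]

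The main obstacle is ensuring the local isomorphisms glue into a global one. To handle this cleanly, I would argue that all the isomorphisms above are canonical: each of Propositions \ref{prop-etale}, \ref{prop-product}, and \ref{prop-Kunz} produces a specific, functorial comparison morphism, so the resulting isomorphism on $\FB_e(U)$ does not depend on the choice of étale factorization up to the canonical identifications. Concretely, on the smooth locus both sides are identified with the open subscheme $Y_\sm$ via the graphs of their respective Frobenius morphisms, so the isomorphisms agree on a dense open of each overlap; since $\FB_e(Y)$ is separated, they agree everywhere on overlaps and descend to a global isomorphism $\FB_e(Y) \cong \FB_e(X) \times_X Y$. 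Naturality in $Y \to X$ then follows from the naturality of each ingredient.
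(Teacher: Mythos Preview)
Your proof is correct and follows essentially the same route as the paper's: use the local factorization of a smooth morphism as an \'etale map followed by the projection $X \times \AA^{n} \to X$, then apply the compatibilities with \'etale morphisms and products. The paper compresses this into two sentences and leaves the identification $\FB_e(\AA^{n}) \cong \AA^{n}$ and the gluing implicit, whereas you spell these out.
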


\begin{proof}
The smooth morphism  is, \'etale locally on $Y$, isomorphic to 
the projection $X \times \AA ^ c \to X$.
Hence the proposition follows from the compatibility of $\FB_e(-)$ with
\'etale morphisms and products.
\end{proof}

\begin{rem}
The compatibility with products and smooth morphisms
 holds not for the higher Nash blowup, but for the flag higher Nash blowup
 introduced in \cite{math.AG/0610396}.
\end{rem}

\subsubsection{Field extension}

\begin{prop}\label{prop-field}
Let $K/k$ be a field extension with $K$ also perfect. Then $\FB_e (X_K) \cong (\FB_e (X))_K$.
\end{prop}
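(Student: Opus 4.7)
The plan is to transfer the characterization of $\FB_e(X)$ as the irreducible component of $\Hilb_{q^d}(X_e/X)$ dominating $X$ (Proposition \ref{prop-relative}) through the base change $\Spec K \to \Spec k$. The first step is to verify that the formation of $X_e$ commutes with perfect field extension: since $K^{1/q}=K$, one checks directly that the natural map $\cO_X^{1/q}\otimes_k K \to \cO_{X_K}^{1/q}$ is an isomorphism, giving $(X_K)_e \cong (X_e)_K$. The Hilbert scheme functor commutes with arbitrary base change, so
\[
\Hilb_{q^d}\bigl((X_K)_e/X_K\bigr) \;\cong\; \Hilb_{q^d}(X_e/X)\times_k K.
\]
In particular $(\FB_e(X))_K := \FB_e(X)\times_k K$ embeds as a closed subscheme of the left hand side, proper over $X_K$.

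Next I would show that $(\FB_e(X))_K$ coincides with the union of those irreducible components of $\Hilb_{q^d}((X_K)_e/X_K)$ that dominate some irreducible component of $X_K$. Because $k$ is perfect, the integral $k$-variety $\FB_e(X)$ is geometrically reduced, hence $(\FB_e(X))_K$ is reduced. By Corollary \ref{cor-inclusion} the induced map $(\FB_e(X))_K\to X_K$ is proper and is an isomorphism over $(X_\sm)_K = (X_K)_\sm$, which meets each irreducible component of $X_K$ in a dense open subset. So the irreducible components of $(\FB_e(X))_K$ are in bijection, via this map, with those of $X_K$, each dominating its image. Conversely, any irreducible component of $\Hilb_{q^d}((X_K)_e/X_K)$ that dominates a component $W\subset X_K$ must, over the smooth locus of $W$, coincide with the scheme-theoretic graph of Frobenius (the fiber over a smooth point being the unique length-$q^d$ subscheme supported there), and therefore equals the corresponding component of $(\FB_e(X))_K$.

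When $X_K$ is irreducible (for instance when $X$ is geometrically irreducible) the set on the right is by definition $\FB_e(X_K)$, and we obtain the asserted isomorphism; more generally, interpreting $\FB_e(X_K)$ as the union of F-blowups of the irreducible components, as was done for the completion before Proposition \ref{prop-completion}, the same identification applies. The main subtlety I expect is this final matching of irreducible components when $X_K$ fails to be irreducible; perfectness of $K$ enters only in securing the identification $(X_e)_K=(X_K)_e$.
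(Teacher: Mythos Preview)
Your approach is essentially the same as the paper's: both rest on the identification $(X_K)_e \cong (X_e)_K$ (from perfectness of $K$) and the base-change compatibility of the relative Hilbert scheme, then invoke Proposition~\ref{prop-relative}. The paper's proof is two sentences and does not address the possible failure of irreducibility of $X_K$; your discussion of that point is more careful than the original, but the underlying argument is the same.
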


\begin{proof}
Since $F^e_{X_{K}}$ is the base change of $F^e_{X}$
by the $\Spec K \to \Spec k$, we have 
\[
 \Hilb _{q^d } ((X_{K})_{e}/X_{K}) \cong \Hilb_{q^d} (X_{e}/X) \otimes _k K,
\]
and the proposition follows.
\end{proof}

\subsection{Separation of analytic branches}

Suppose that $k$ is algebraically closed.
Let $X$ be a variety, $\hat X$ the completion of $X$ at $x \in X(k)$
and $W_i \subset \hat X$, $ 1 \le i \le l$, the irreducible components.
Since $\FB_e ( \hat X) = \bigcup _i \FB_e (W_i)$, 
for every point $Z \in (\FB_e (\hat X))(k)$, there exists $i$ with
$Z \subset (W_i)_{e}$.
Identifying $W_{i}$ and $(W_{i})_{e}$, we have $Z \subset W_i$.
If $Z \not \subset W_i \cap W_j$ for every $i \ne j$, where $W_i \cap W_j$ is the 
\emph{scheme-theoretic} intersection,
then the analytic branch containing $Z$ is unique.
If for every $Z \in (\FB_e (\hat X))(k)$ and for every $i \ne j$, 
$Z \not \subset W_i \cap W_j$, then $\FB_e (\hat X )= \bigsqcup  _i \FB_e( W_i)$, the disjoint union.
If it is the case, we say that \emph{the $e$-th F-blowup separates the analytic branches at $x$}.

\begin{prop}\label{prop-separation}
For $e \gg 0$, the $e$-th F-blowup separates the analytic branches at $x$.
\end{prop}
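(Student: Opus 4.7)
The plan is to handle each pair of indices $i\neq j$ separately: since there are only finitely many such pairs, it suffices to prove that for each one there exists $e_0$ such that for every $e\geq e_0$, no $Z \in \FB_e(W_i)(k)$ satisfies $Z\subset W_i\cap W_j$. The guiding intuition is a length comparison: $Z$ has length $q^d$, whereas any $k$-cluster supported at $x$, contained in $(Z_\red)^{[q]}$, and squeezed into the lower-dimensional $W_i\cap W_j$ has length only $O(q^c)$ with $c:=\dim_x(W_i\cap W_j)\leq d-1$, and this eventually becomes smaller than $q^d$.

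To make this precise, write $\hat R=\hat\cO_{X,x}$, let $I_s\subset\hat R$ be the defining ideal of $W_s$, set $R_i=\hat R/I_i$ with maximal ideal $\fm_i$, $T=\hat R/(I_i+I_j)$ with maximal ideal $\fm_T$, and $L_j=I_j R_i\subset R_i$. Since $\FB_e(W_i)$ is by construction the closure of a family of Frobenius fibers over $W_{i,\sm}$, each of which already lies in the closed subscheme $(W_i)_e\subset\hat X_e$, every $Z\in\FB_e(W_i)(k)$ is a subscheme of $(W_i)_e=\Spec R_i^{1/q}$, defined by an ideal $J\subset R_i^{1/q}$ with $\dim_k R_i^{1/q}/J=q^d$. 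Corollary~\ref{cor-F-nbhd} forces $J\supset(\fm_i^{1/q})^{[q]}=\fm_i R_i^{1/q}$, and the hypothesis $Z\subset W_i\cap W_j$ translates to $J\supset L_j^{1/q}$. Combining, $R_i^{1/q}/J$ is a quotient of $R_i^{1/q}/(L_j^{1/q}+\fm_i R_i^{1/q})\cong T^{1/q}/\fm_T T^{1/q}$, and since $k$ is perfect, the Frobenius isomorphism $t\mapsto t^{1/q}$ identifies this with $T/\fm_T^{[q]}$ as $k$-vector spaces, yielding
$$
q^d\leq \dim_k T/\fm_T^{[q]}.
$$
Finally, by Cohen's structure theorem $T$ is finite of some rank $r$ over a regular subring $P\cong k[[t_1,\dots,t_c]]$; since each $t_l\in\fm_T$, we have $\fm_T^{[q]}\supset(t_1^q,\dots,t_c^q)T$, whence $\dim_k T/\fm_T^{[q]}\leq r q^c$. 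So $q^d\leq r q^c$, which fails as soon as $q>r^{1/(d-c)}$.

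The main subtlety is the chain of ideal-theoretic translations in the second paragraph: confirming that every $Z\in\FB_e(W_i)(k)$ is automatically contained scheme-theoretically in $(W_i)_e$ (via closedness of $(W_i)_e\hookrightarrow\hat X_e$ together with dominance over $W_i$), and verifying the $k$-linear identification $T^{1/q}/\fm_T T^{1/q}\cong T/\fm_T^{[q]}$, which uses perfectness of $k$. After these, one is left only with the classical Hilbert--Kunz-type bound $\dim_k T/\fm_T^{[q]}=O(q^{\dim T})$ obtained from Noether normalization, and the strict inequality $c<d$ does the rest.
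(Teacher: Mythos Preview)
Your proof is correct and follows the same overall strategy as the paper's: use Corollary~\ref{cor-F-nbhd} to trap $Z$ inside $x^{[q]}$, compare the length $q^d$ of $Z$ against the length of the intersection $W_i\cap W_j$ cut by the Frobenius power of the maximal ideal, and conclude from the bound $\length(T/\fm_T^{[q]})=O(q^{\dim T})$ with $\dim T<d$.

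The two differences from the paper are minor but worth noting. First, the paper treats all pairs at once by setting $A:=\bigcup_{i\ne j}W_i\cap W_j$ and arguing with $\cO_A/\fm_A^{[q]}$, whereas you work pairwise; this is purely cosmetic. Second, and more interestingly, the paper obtains the asymptotic bound by invoking Monsky's theorem on the Hilbert--Kunz function, while you supply an elementary argument via Noether normalization (Cohen's structure theorem) that directly yields $\dim_k T/\fm_T^{[q]}\le r\,q^c$. Your route is more self-contained: Monsky's theorem establishes the existence of the Hilbert--Kunz multiplicity, which is considerably stronger than the crude upper bound needed here. One small remark: when you say ``finite of some rank $r$'', you only need that $T$ is generated by $r$ elements as a $P$-module (it need not be free), and the inequality $\dim_k T/(t_1^q,\dots,t_c^q)T\le r q^c$ still follows.
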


\begin{proof}
Let $A:= \bigcup _{i \ne j} W_i \cap W_j$, which is a closed subscheme of $\hat X$
of dimension $<d$. It suffices to show that for $e \gg 0$,
every $Z \in (\FB_e (\hat X))(k)$ satisfies $Z \not \subset A$.
From Corollary \ref{cor-F-nbhd}, for every $Z \in (\FB_e (X))(k)$,
we have $Z \subset x ^{[q]}$.
It follows that if $Z \subset A$, then $Z \subset A \cap x^{[q]}$ 
and 
\[
p^{de} = \length \cO_Z \le \length \cO_A /\fm_A^{[p^e]}.
\]
Here $\cO_Z$ and $\cO_A$ are the coordinate rings of $Z$ and $A$ respectively,
and $\fm_A \subset \cO_A$ is the maximal ideal.
The function $f(e)=  \length \cO_A /\fm_A^{[p^e]}$
is called the Hilbert-Kunz function.
Monsky's theorem \cite{MR697329} says 
\[
f(e) =O (p^{e \cdot \dim A}).
\]
So, for $e \gg 0$, the above inequality does not hold
and for every  $Z \in (\FB_e (\hat X))(k)$, we have $Z \not \subset A$.
This completes the proof.
\end{proof}

It seems an open problem whether for a scheme $T$, the Hilbert-Kunz functions
of the local rings $\cO_{T,t}$, $t \in T(k)$ is
bounded from above. 
If the answer is affirmative, then for $e \gg 0$, the $e$-th F-blowup separates
the analytic branches simultaneously at all points of $X$. 
This can be shown by applying arguments in \cite{MR2371378}.
The boundedness of the Hilbert-Kunz \emph{multiplicity} 
is proved by Enescu and Shimomoto \cite{MR2119113} under some condition.

\subsection{The resolution of singularities of a curve}

\begin{thm}\label{thm-curve-resolution}
Let $X$ be a one-dimensional variety. Then for $e \gg 0$, $\FB_e(X)$ is smooth.
\end{thm}

\begin{proof}
Let $\hat X$ be the completion of $X$ at some point as above.
From Proposition \ref{prop-separation}, it is enough to consider the case where
$\hat X$ is irreducible. Then $\hat X = \Spec R$ and its normalization is
$Y := \Spec k[[t]]$.  
The  morphism $Y \to X$ factors as 
\[
 Y \xrightarrow{h_{e}} \FB_e (\hat X) \to \hat X.
\]
Suppose now that $e$ is so large that $t^{p^e} \in R$, equivalently 
$t\in R^{1/p^e}$. 
Then we claim that the subscheme $\cZ\subset Y \times \hat X_e$ defined by the ideal
\[
 \langle  t \otimes 1 - 1 \otimes t  \rangle \subset R^{1/p^e} \otimes k[[t]]
\]
 is the family of clusters in $X_{e}$ over $Y$ corresponding to $h_{e}$. 
Indeed the restriction of $\cZ$ to the generic point is obviously
  the pullback of  the universal family over $\FB_{1}(X)$ to $\Spec k((t))$. Moreover the closed fiber of  $\cZ \to Y$
  is $\Spec R^{1/p^e}/\langle t \rangle$ and has length $p^{e}$ from \cite[Lem.\ 11.12]{MR1322960}.
Therefore $\cZ$ is flat over $Y$ and  hence the claim holds.

Let $\cZ_{\epsilon} \subset \Spec R^{1/p^{e}} \otimes k[t]/\langle t^{2}\rangle $ be 
the restriction of $\cZ$ to the canonical tangent vector 
\[
\epsilon : \Spec k[t]/\langle t^{2} \rangle \hookrightarrow Y = \Spec k[[t]].
\]
Then the induced tangent vector of $\FB_{e}(X)$, 
\[
Th_{e}(\epsilon)= h_{e} \circ \epsilon:\Spec k[t]/\langle t^{2} \rangle \hookrightarrow Y \to \FB_{e}(X),
\]
corresponds to a homomorphism
\[
 \langle t \rangle_{R^{1/p^{e}}} \to R^{1/p^{e}} / \langle t \rangle, 
\]
which maps $t$ to $-1$. In particular, $Th_{e}(\epsilon) \ne 0$ and hence $h_{e}$ is an isomorphism.
We have proved the theorem.
\end{proof}

\section{F-blowups of toric varieties}\label{sec-toric}

In this section we suppose that $k$ is an arbitrary field, say of
characteristic $p \ge 0$.

\subsection{Toric and Gr\"obner computation of the F-blowup}

\subsubsection{Frobenius-like morphisms of toric varieites}

Let $M = \ZZ^d$ be a free abelian group of rank $d$ and $A \subset M$  a finitely generated submonoid
which generates $M$ as an abelian group. 
The group algebra $k[M]$ is identified with the Laurent polynomial ring $k[x_1^\pm,\dots,x_d^\pm]$ and $k[A] \subset k[M]$ is
a subalgebra. Put $X := \Spec k[A]$,
which is a (not necessarily normal) affine toric variety
and contains the torus $T:=\Spec k[M]$ as an open subvariety.
For $l \in \ZZpos$, the inclusions $M \hookrightarrow (1/l)\cdot  M$ and $A \hookrightarrow  (1/l)\cdot  A$  
induce the morphisms of $k$-schemes,
\begin{gather*}
F^{(l)}_{T} :T_{(l)} :=\Spec k[(1/l)\cdot M] \to T, \text{ and} \\
F^{(l)}_{X} :X_{(l)}:=\Spec k[(1/l)\cdot A] \to X .
\end{gather*}
If $k$ is perfect and if $p >0$ and $l=p^e$, then $F^{(l)}_{X} =F^e_{X}$. 
In what follows, we simply write $F^{(l)}$
for both $F^{(l)}_{T}$ and $F^{(l)}_{X}$.

The point $t:=(t_{1},\dots,t_{d}) \in T_{K}(K)$
is defined by the ideal
\[
 \langle  x_{1}-t_{1} , \dots, x_{d}-t_{d} \rangle_{K[M]},
\]
and $(F^{(l)})^{-1}(t)$  by
\[
 \langle x_{1}-t_{1} ,\dots,x_{d}-t_{d} \rangle_{K[(1/l)\cdot M]} .
\]

\begin{defn}
We define $\FB_{(l)}( X)$ to be the closure of
$\{ (F^{(l)})^{-1}(t) \mid t \in T \}$ in the Hilbert scheme $\Hilb_{l^{d}}(X)$.
\end{defn}

Clearly if $k$ is perfect, $p >0$ and $l=p^e$, then $\FB_{(l)} (X) =\FB_{e} (X)$.

\begin{prop}
\begin{enumerate}
\item There exists a natural projective birational morphism $\FB_{(l)} (X) \to X$.
\item $\FB_{(l)}( X)$ is isomorphic to the irreducible component of $\Hilb_{l^{d}} (X_{(l)}/X)$
 dominating $X$.
\item $\FB_{(l)}(-)$ is compatible with open immersions and products.
\end{enumerate}
\end{prop}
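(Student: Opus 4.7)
The plan is to transport Corollary \ref{cor-inclusion}, Proposition \ref{prop-relative}, and Propositions \ref{prop-etale}--\ref{prop-product} from the Frobenius setting almost verbatim. The morphism $F^{(l)}$ behaves formally like an $e$-th Frobenius: it is finite and dominant, and the fiber over a torus point is a length-$l^{d}$ cluster. The claim is therefore that the tangent-space arguments of Section \ref{sec-basic} never used anything about $F^{e}$ beyond the existence of local coordinates at smooth points and the explicit form of the ideal defining the graph, and these are exactly what the torus $T \subset X$ supplies.

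For (1), I would first build $\iota \colon T \to \Hilb_{l^{d}}(X_{(l)})$ by $t \mapsto (F^{(l)})^{-1}(t)$, show it is an immersion by repeating the tangent-space computation of the lemma preceding the definition of the F-blowup, and set $\FB_{(l)}(X) := \overline{\iota(T)}$. The extension of the inverse rational map to a morphism $\FB_{(l)}(X) \to X$ then follows by copying the tangent-vector argument given right after that definition. For (2), I would mimic the proof of Proposition \ref{prop-relative}: with $\Gamma \subset X_{(l)} \times X$ the graph of $F^{(l)}$, the graph of $\FB_{(l)}(X) \to X$ embeds into $\Hilb_{l^{d}}(X_{(l)} \times X / X)$ via $\Hilb_{l^{d}}(X_{(l)}/X) \cong \Hilb_{l^{d}}(\Gamma/X)$, and by irreducibility it coincides with the component that dominates $X$.

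For (3), I would first verify the toric compatibility $U_{(l)} = U \times_{X} X_{(l)}$ for an open toric subvariety $U \subset X$ coming from a face $\tau \preceq \sigma$: this reduces to the identity $(1/l) \cdot A_{\tau} = ((1/l) \cdot A)_{\tau}$ of submonoids, a direct check. Once this is in hand, $\Hilb_{l^{d}}(U_{(l)}/U) = \Hilb_{l^{d}}(X_{(l)}/X) \times_{X} U$ is formal, and the dominating component passes through the base change. For products, one has $(X \times Y)_{(l)} = X_{(l)} \times Y_{(l)}$ from $(1/l)(A \oplus B) = (1/l) A \oplus (1/l) B$, after which the construction $(V, W) \mapsto V \times W$ and the tangent-space injectivity argument of Proposition \ref{prop-product} apply without change. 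The only toric-specific ingredient is the identity $(1/l) \cdot A_{\tau} = ((1/l) \cdot A)_{\tau}$; everything else is a mechanical translation, so I expect no serious obstacle beyond bookkeeping.
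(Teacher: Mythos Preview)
Your proposal is correct and matches the paper's approach exactly: the paper's proof consists of the single sentence ``These results can be proved in the same way as the corresponding ones for $\FB_e(X)$,'' and you have simply spelled out what that entails. The only content you add beyond the paper is the explicit monoid checks $(1/l)\cdot A_\tau = ((1/l)\cdot A)_\tau$ and $(1/l)(A\oplus B) = (1/l)A \oplus (1/l)B$, which are the toric analogues of the base-change identity $Y_e \cong Y\times_X X_e$ used in Propositions~\ref{prop-etale} and~\ref{prop-product}; these are indeed routine.
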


\begin{proof}
These results can be proved in the same way as the corresponding ones for
$\FB_{e} (X)$.
\end{proof}

In particular, from the third assertion of the proposition,
 we need only consider the case where $A$ contains no nontrivial group.
(Conversely suppose that  $A$ contains a nontrivial group.
Let $B \subset A$ be the maximal subgroup
and let  $a_{1} , \dots, a_{m},b_{1}, \dots,b_{n } $ be 
generators of $A$ such that $a_{i} \notin B$ and $b_{i} \in B$.
Then there exists a subset of $\{b_{1}, \dots, b_{n}\}$,
say $\{b_{1}, \dots, b_{l}\}$, $l \le n$,
which generates a monoid containing no nontrivial group
but still generates $B$ as a group.
Let  $A'$ be the monoid generated by $a_{1} , \dots, a_{m},b_{1}, \dots,b_{l } $,
which contains no nontrivial group. 
Then $k[A]$ is a localization of $k[A']$ by an element. 
Indeed if we put $b:= \sum_{i=1}^{l} b_{i}$,
then  $k[A ] = k[A'][x^{-b}]$. Thus the toric variety associated to $A$
is an open subvariety of the one associated to $A'$.)

The universal family over $T \subset \FB_{(l)} (X)$ is the subscheme
of $T_{(l)} \times T$ defined by the ideal
\[
 \langle x^{a} \otimes 1  - 1 \otimes x^{a}  \mid a \in M  \rangle \subset k[(1/l) \cdot M] \otimes k[M].
\]
Let $\11 :=(1,\dots,1)\in T$ be the unit point and $(F^{(l)})^{-1}( \11) $
its scheme-theoretic inverse image by $F^{(l)}$.
The defining ideal of $(F^{(l)})^{-1}( \11)$ as a subscheme of $T_{(l)}$ is
\[
\langle x_1-1 ,\dots, x_d-1\rangle _{k[(1/l)\cdot M]}.
\]
Pulling it back to $X_{(l)}$, we obtain the defining ideal of it as a subscheme of
$X_{(l)}$,
\[
\fa_{l} := k[(1/l)\cdot A] \cap \langle x_1-1 ,\dots, x_d-1\rangle _{k[(1/l)\cdot M]} .
\]

\begin{lem}\label{lem-q-binomial}
Let $a,b \in (1/l) \cdot A$ be such that $a-b \in M$.
Then $x^a - x^b \in \fa_{l}$.
\end{lem}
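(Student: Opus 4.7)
The plan is to verify the two conditions for membership in $\fa_{l}$ separately: containment in $k[(1/l)\cdot A]$, and containment in the ideal $\langle x_{1}-1,\dots,x_{d}-1\rangle_{k[(1/l)\cdot M]}$.

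First, since $a,b \in (1/l)\cdot A$, the elements $x^{a}$ and $x^{b}$ both lie in $k[(1/l)\cdot A]$, so $x^{a}-x^{b} \in k[(1/l)\cdot A]$ with no work. The real content is showing $x^{a}-x^{b}$ lies in the ideal generated by $x_{1}-1,\dots,x_{d}-1$ in $k[(1/l)\cdot M]$.

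The key idea is to factor out $x^{b}$. Setting $m := a-b \in M$, we have
\[
x^{a}-x^{b} = x^{b}\bigl(x^{m}-1\bigr),
\]
with $x^{b} \in k[(1/l)\cdot M]$ and $x^{m} \in k[M]$. So it suffices to show $x^{m}-1 \in \langle x_{1}-1,\dots,x_{d}-1\rangle_{k[M]}$, because this ideal is contained in its extension to $k[(1/l)\cdot M]$. I would do this by a telescoping argument: writing $m=(m_{1},\dots,m_{d})$,
\[
x^{m}-1 = \sum_{i=1}^{d} x_{1}^{m_{1}}\cdots x_{i-1}^{m_{i-1}}\bigl(x_{i}^{m_{i}}-1\bigr),
\]
so the problem reduces to the single-variable case $x_{i}^{n}-1 \in \langle x_{i}-1\rangle$ inside $k[x_{i}^{\pm}]$ for every $n \in \ZZ$. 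For $n \ge 0$ this is the standard factorization $x_{i}^{n}-1 = (x_{i}-1)(x_{i}^{n-1}+\cdots+1)$; for $n<0$, multiply through by $-x_{i}^{n}$, which is a unit in $k[x_{i}^{\pm}]$, to reduce to the positive case.

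There is no real obstacle here; the statement is essentially the observation that the maximal ideal of the torus $T$ at the unit point $\11$ is generated by $x_{1}-1,\dots,x_{d}-1$ inside $k[M]$, combined with the trivial fact that $a-b \in M$ implies $x^{a}/x^{b} \in k[M]$. The only thing to be a little careful about is keeping track of which ring each element lives in, so that the factorization $x^{a}-x^{b}=x^{b}(x^{m}-1)$ is interpreted in $k[(1/l)\cdot M]$ and the telescoping identity is interpreted in the subring $k[M]$.
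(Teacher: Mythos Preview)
Your proof is correct and follows essentially the same approach as the paper: factor $x^{a}-x^{b}=x^{b}(x^{a-b}-1)$, observe that $x^{a-b}-1$ lies in $\langle x_{1}-1,\dots,x_{d}-1\rangle_{k[M]}$, and conclude. The only difference is that you spell out the telescoping identity for $x^{m}-1$ explicitly, whereas the paper simply asserts this membership and notes that $x^{b}$ is invertible in $k[(1/l)\cdot M]$.
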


\begin{proof}
We have  
\[
 x^{a-b} -1  \in  \langle x_1 -1 , \dots, x_d -1 \rangle _{k[M]}
 \subset \langle x_1 -1 , \dots, x_d -1 \rangle _{k[(1/l)\cdot M]}.
\]
Therefore 
\[
x^{a} -x^{b}=x^b (x^{a-b}-1) \in \langle x_1 -1 , \dots, x_d -1 \rangle _{k[(1/l)\cdot M]}.
\]
On the other hand, we obviously have $x ^a -x^b \in k[(1/l)\cdot A]$, which 
proves the lemma.
\end{proof}

We will see below (Proposition \ref{prop-grobner}) that binomials as in the lemma
generate $\fa_{l}$. In particular, $\fa_{l}$ is a binomial ideal in $k[A]$.

\subsubsection{Distinguished clusters, Gr\"obner bases and Gr\"obner fans}

For the Gr\"obner basis theory, we refer the reader to \cite{MR1363949}.
In particular, for Gr\"obner bases in
 the monoid algebra $k[A]$, see Chapter 11, \textit{ibid.} 

Let $N:= M^\vee = \Hom_{\mathrm{ab.gp.}} (M,\ZZ)$ be the dual abelian group of $M$
and put $M_\RR := M \otimes \RR$ and $N_\RR := N \otimes \RR$.
Let $A_\RR \subset M_\RR$ be the cone spanned by $A$
and $A_\RR^\vee \subset N_\RR$ the dual cone.
For a Laurent polynomial $f \in k[M]$ and a weight vector $w \in N_{\RR}$,
we define the initial form, $\initial_{w} f$, as usual.
For an ideal $\fc \subset k[A]$, the initial ideal, $\initial_{w} \fc \subset A$,
is the ideal generated by $\initial_{w} f$, $f \in \fc$. 
For a total monomial order $>$ on $k[A]$, we similarly define
the initial term, $\initial_{>}f$, and the initial ideal, $\initial_{>} \fc$.

\begin{defn}
Let $\fc \subset k[(1/l) \cdot A]$ be an ideal, $w \in N_{\RR}$ a weight vector
and $>$ a total monomial order on $k[A]$.
A  subset $I \subset \fc$ is called a \emph{Gr\"obner basis} of $\fc$ with respect to
$>$ (resp.\ $w$)
if $\initial_{>} \fc = \langle \initial _{>} f \mid f\in I \rangle $ (resp.\ $\initial_{w} \fc = \langle \initial _{w} f \mid f \in I \rangle $).
Unlike the standard terminology, we do not assume that $I$ is finite.
\end{defn}

A Gr\"obner basis $I$ of $\fc$ with respect to some weight vector $w$
is also a usual basis of $\fc$.

Having the torus action derived from the one on $X$,  
$\FB_{(l)} (X)$ is also a toric variety and so determines
 a fan $\Delta_{A,l} $ with $|\Delta _{A,l}| = A_\RR^\vee$. 
(Note that the fan does not determine $X$, because $X$ may not be normal.)
For a cone  $\sigma \in \Delta_{A,l}$, we denote by $U_\sigma \subset \FB_{(l)} X$ the corresponding affine open toric subvariety. 
We also denote by $\relint \sigma$ the relative interior of $\sigma$. 
For $w \in \relint \sigma \cap N$, let
\[
\lambda_{w} : \GG_{m} \to T
\]
be the corresponding one-parameter subgroup (see \cite[\S 2.3]{MR1234037}).
Then the distinguished point of $ U_\sigma$ corresponds to the limit cluster
\[
Z_{\sigma} := \lim _{ t \to 0} (F^{(l)})^{-1} ( \lambda_{w}(t)).
\]
Hence $Z_\sigma \subset X$ has the defining ideal $\fb_\sigma:=\initial _w \fa_{l}$. For a nice explanation of this fact, we refer
the reader to \cite[\S 15.8]{MR1322960}.
 Our situation is a little more general than the one
in \textit{ibid}. However the same arguments can apply.
As a consequence, we obtain:

\begin{prop}
The fan $\Delta_{A,l} $ is the  Gr\"obner fan of $\fa_{l}$. 
\end{prop}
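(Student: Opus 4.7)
The plan is to read off the statement from the preceding discussion combined with the definition of the Gr\"obner fan. Recall that for any toric variety $Y$ with dense torus $T$ and associated fan $\Delta$, the fan is recovered from $Y$ by the following recipe: a weight vector $w \in N$ belongs to $|\Delta|$ exactly when $\lim_{t \to 0} \lambda_w(t)$ exists in $Y$, and two weights $w, w' \in |\Delta| \cap N$ lie in the relative interior of the same cone of $\Delta$ if and only if they produce the same limit point. I would apply this to $Y = \FB_{(l)} (X)$ with its inherited torus action; since $\FB_{(l)}(X) \to X$ is proper, limits exist over all of $A_\RR^\vee$, so $|\Delta_{A,l}| = A_\RR^\vee$ as already noted, and the cones of $\Delta_{A,l}$ are characterized by the equivalence $w \sim w'$ iff the limit clusters $\lim_{t \to 0} (F^{(l)})^{-1}(\lambda_w(t))$ and $\lim_{t \to 0} (F^{(l)})^{-1}(\lambda_{w'}(t))$ coincide.

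Next I would invoke the key input of the preceding paragraph in the excerpt, which identifies the limit cluster $Z_\sigma$ with the subscheme cut out by the initial ideal $\initial_w \fa_l$ for any $w \in \relint \sigma \cap N$. This translates the equivalence above into: $w \sim w'$ iff $\initial_w \fa_l = \initial_{w'} \fa_l$. Hence the relative interiors of the cones of $\Delta_{A,l}$ (intersected with $N$) are precisely the equivalence classes of weights under coincidence of initial ideals of $\fa_l$. This is exactly the defining property of the Gr\"obner fan of $\fa_l$ (cf.\ \cite{MR1363949}), and because a rational polyhedral fan is determined by the relative interiors of its cones, this forces $\Delta_{A,l}$ to coincide with the Gr\"obner fan.

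The one step worth checking carefully, rather than a serious obstacle, is the bijectivity of the translation between distinguished $T$-fixed points of $\FB_{(l)}(X)$ and initial ideals of $\fa_l$. Injectivity (distinct cones yield distinct initial ideals) is immediate from the general toric fact that distinct cones correspond to distinct distinguished points, hence to distinct clusters $Z_\sigma$, and surjectivity onto the Gr\"obner equivalence classes is automatic since every $w \in A_\RR^\vee \cap N$ produces some limit cluster via its one-parameter subgroup. The underlying toric manipulation of limits of clusters inside a torus-equivariant Hilbert scheme is exactly the content worked out in \cite[\S 15.8]{MR1322960} and appealed to in the excerpt, so no new ingredient is required beyond what has already been established.
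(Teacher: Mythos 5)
Your argument is correct and follows essentially the same route as the paper, which derives the proposition directly from the identification of the limit cluster $Z_\sigma$ with the subscheme cut out by $\initial_w \fa_l$ (via \cite[\S 15.8]{MR1322960}); you have merely spelled out the standard translation between cones of a toric variety, limits of one-parameter subgroups, and coincidence of initial ideals that the paper leaves implicit.
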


\begin{prop}\label{prop-grobner}
For every $w \in \relint A_\RR^\vee $, 
there exists a finite subset of
\[
\Lambda := \{ x^a -x^b | a,b \in (1/l) \cdot A, \, a-b \in M \}
\]
which is a Gr\"obner basis of $\fa_{l}$ with respect to $w$. Hence  $\Lambda$ is also a Gr\"obner basis
of $\fa_{l}$ with respect to $w$.
\end{prop}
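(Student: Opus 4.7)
The plan is to exploit the natural $(1/l) \cdot M / M$-grading on $k[(1/l) \cdot A]$, in which $x^b$ has degree $b \bmod M$: first decompose $\fa_l$ into graded pieces generated by binomials in $\Lambda$, then pass to initial forms through a case analysis, and finally invoke Noetherianity to extract a finite generating set.

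First I would observe that the augmentation ideal $\langle x_1-1,\dots,x_d-1\rangle_{k[(1/l)\cdot M]}$ is homogeneous for this grading, and hence so is its contraction $\fa_l$. A direct check gives
\[
\fa_l^{(\alpha)} = \Bigl\{ \sum c_b x^b \,\Bigm|\, b \in (1/l)\cdot A,\ b \equiv \alpha \bmod M,\ \sum c_b = 0 \Bigr\},
\]
which is obviously spanned over $k$ by binomials $x^b - x^{b'}$ from $\Lambda$; in particular $\Lambda$ spans $\fa_l$ as a $k$-vector space.

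The key step is to show that $\{\initial_w \lambda \mid \lambda \in \Lambda\}$ spans $\initial_w \fa_l$ as a $k$-vector space. Writing any $f \in \fa_l$ as a sum of its graded pieces $f_\alpha$, I note that $\initial_w f$ is the sum of those $\initial_w f_\alpha$ achieving the top $w$-weight, so one can reduce to the homogeneous case $f = \sum c_b x^b \in \fa_l^{(\alpha)}$. Let $W$ be the maximum $w$-weight attained on the support of $f$ and $B^{\max}$ the set of $b$'s achieving it. If some $b'$ in the support satisfies $\langle w, b' \rangle < W$, then for each $b \in B^{\max}$ the monomial $x^b$ equals $\initial_w(x^b - x^{b'})$ and $x^b - x^{b'} \in \Lambda$, placing $\initial_w f = \sum_{b \in B^{\max}} c_b x^b$ in the desired span. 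Otherwise the entire support achieves weight $W$, so $\initial_w f = f$; since $\sum c_b = 0$ forces $|B^{\max}| \geq 2$, I can rewrite $f$ as a $k$-linear combination of binomials $x^b - x^{b_0}$ for a fixed $b_0 \in B^{\max}$, each of which is in $\Lambda$ and equals its own $\initial_w$.

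To conclude, $\Lambda$ is closed under multiplication by monomials $x^c$ with $c \in (1/l) \cdot A$, since $x^c(x^a - x^b) = x^{a+c} - x^{b+c}$ still lies in $\Lambda$; the same property transfers to $\{\initial_w \lambda : \lambda \in \Lambda\}$. Hence the $k$-span of the latter is already an ideal, and by the previous paragraph it equals $\initial_w \fa_l$. Since $k[(1/l) \cdot A]$ is a finitely generated $k$-algebra and hence Noetherian, this ideal admits a finite generating set drawn from $\{\initial_w \lambda\}$, providing the required finite Gröbner sub-basis inside $\Lambda$; $\Lambda$ itself is then also a Gröbner basis. The main obstacle is the cancellation analysis in the key step: because $\initial_w$ is non-linear, one must verify that the top-weight part of an arbitrary binomial combination still lies in the span of initial forms of individual binomials, and the dichotomy based on whether the top weight is attained on the whole support is what makes the argument go through cleanly.
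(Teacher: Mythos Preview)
Your proof is correct but follows a genuinely different route from the paper. The paper proceeds \emph{extrinsically}: it chooses a surjection $\phi:k[y_1^{1/l},\dots,y_n^{1/l}]\to k[(1/l)\cdot A]$, lifts $\fa_l$ to the ideal $J=I_A+\langle y_1-1,\dots,y_n-1\rangle$, and runs Buchberger's algorithm on the given binomial generators with respect to a refinement of $>_w$; the key observation is that taking $S$-polynomials and division steps preserves the property ``$\phi(g_i)\in\Lambda$,'' so the output is a finite Gr\"obner basis whose image lies in $\Lambda$. Your approach is \emph{intrinsic}: you exploit the $(1/l)\cdot M/M$-grading on $k[(1/l)\cdot A]$ to describe $\fa_l^{(\alpha)}$ explicitly, then handle the passage to initial forms by a clean two-case analysis on whether the top $w$-weight is attained on the whole support, and finally invoke Noetherianity. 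Your argument is more elementary and self-contained, avoiding any appeal to the extrinsic Gr\"obner machinery of \cite{MR1363949}; the paper's argument, by contrast, is constructive and yields an explicit algorithm for producing the finite subset.
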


\begin{proof}
We just apply the extrinsic computation of Gr\"obner bases
\cite[Algorithm 11.24]{MR1363949}, which goes as follows:
Let $ \phi:k[y_1^{1/l}, \dots,y_n^{1/l}] \to k[(1/l) \cdot A] $ be a surjective $k$-algebra homomorphism
which sends $y_i^{1/l}$ to monomial generators of $k[(1/l)\cdot A]$. 
We give an $\RRnonneg$-grading to $k[(1/l) \cdot A]$ by the weight vector $w$
and one to $k[y_1^{1/l}, \dots,y_n^{1/l}] $ so that $\phi$ preserves degrees.
The grading determines a partial order $>_{w}$
 on the set of monomials of $k[y_1^{1/l}, \dots,y_n^{1/l}]  $.

Put $I_A := \ker \phi$, which is  an ideal generated by 
finitely many binomials 
\[
g_1 =y^{c_1}-y^{d_1}, \dots, g_s=y^{c_s}-y^{d_s} \in k[y_1^{1/l}, \dots,y_n^{1/l}] .
\] 
Consider the ideal
\[
J:= I_A + \langle y_1 -1, \dots, y_n-1\rangle.
\]
This has a basis 
\begin{equation}\label{basis}
\{ g_1, \dots,g_s, g_{s+1} :=y_1 -1, \dots, g_{s+n}:=y_n-1\}.
\end{equation}

Now choose a total monomial order $>$ on $k[y_1^{1/l}, \dots,y_n^{1/l}] $
which refines $>_w$.
We apply  Buchberger's algorithm to the basis \eqref{basis} to find a Gr\"obner basis of $J$ with respect to $>$. For the basis \eqref{basis}, for every $1 \le i \le s+n$, we have
 $\phi(g_i) \in \Lambda$. 
This property of a basis is preserved in each step of the algorithm:
Taking an $S$-polynomial and each step of the division algorithm.
In particular, the output has also this property. 
Let  $\{ h_1, \dots, h_t\}$ be the output.  Then $\{ h_1, \dots, h_t\}$ is a Gr\"obner basis also
for $<_w$ (see for instance \cite[Corollary 1.9]{MR1363949}). Hence
$\{ \phi(h_1),\dots,\phi(h_t)\} \subset \Lambda$ is a Gr\"obner basis of $\fa_{l}$
with respect to $w$. 
\end{proof}

\begin{lem}\label{lem-monomial-ideal}
For a $d$-dimensional cone $\sigma \in \Delta_{A,l}$,
$\fb_\sigma = \initial_{w} \fa_{l}$ is a monomial ideal. Namely it is generated by some monomials in $k[(1/l) \cdot A]$.
\end{lem}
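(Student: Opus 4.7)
The plan is to combine Proposition \ref{prop-grobner} with a small genericity argument inside $\relint \sigma$. Since $\sigma$ has the maximal dimension $d = \dim N_\RR$, the relative interior $\relint \sigma$ is a nonempty open subset of $N_\RR$, which gives room to perturb the weight.

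First I would fix some $w_0 \in \relint \sigma$, so that $\fb_\sigma = \initial_{w_0} \fa_{l}$, and apply Proposition \ref{prop-grobner} to obtain a finite Gr\"obner basis $\{x^{a_i} - x^{b_i}\}_{i=1}^{r} \subset \Lambda$ of $\fa_{l}$ with respect to $w_0$; discarding trivial elements I may assume $a_i \ne b_i$ for every $i$. Next, the finitely many hyperplanes $H_i := \{w \in N_\RR : w \cdot (a_i - b_i) = 0\}$ cannot cover the open set $\relint \sigma$, so I can pick $w \in \relint \sigma$ with $w \cdot a_i \ne w \cdot b_i$ for all $i$. Because $w, w_0 \in \relint \sigma$ lie in the same cone of the Gr\"obner fan $\Delta_{A,l}$, we have $\initial_w \fa_{l} = \initial_{w_0} \fa_{l} = \fb_\sigma$, and the same finite set $\{x^{a_i} - x^{b_i}\}$ continues to be a Gr\"obner basis throughout $\bar\sigma$. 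For this perturbed $w$, each $\initial_w(x^{a_i} - x^{b_i})$ is a single monomial ($x^{a_i}$ or $-x^{b_i}$), and so $\fb_\sigma$ is generated by monomials.

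The delicate point is the assertion that $\{x^{a_i} - x^{b_i}\}$ remains a Gr\"obner basis under the perturbation $w_0 \rightsquigarrow w$. A cleaner route that avoids this issue altogether is the toric one. As observed just before the lemma, $\FB_{(l)}(X)$ is itself a toric variety with torus $T$, and because $\sigma$ has full dimension $d$, the distinguished point $p_\sigma = \lim_{t \to 0} \lambda_w(t)$ of $U_\sigma$ is a $T$-fixed point. The universal family over $\FB_{(l)}(X)$ is $T$-equivariant: over the open torus $T \subset \FB_{(l)}(X)$ it coincides with the graph of the $T$-equivariant morphism $F^{(l)} \colon X_{(l)} \to X$, and this action extends to the closure. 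Hence the fiber $Z_\sigma \subset X_{(l)}$ over $p_\sigma$ is $T$-invariant as a closed subscheme, and a $T$-invariant ideal of the monoid algebra $k[(1/l) \cdot A]$ is automatically monomial, yielding the lemma.
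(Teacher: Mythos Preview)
Your toric argument (the second route) is correct and complete: the action of the bigger torus $T_{(l)}=\Spec k[(1/l)M]$ on $X_{(l)}$ induces one on $\Hilb_{l^{d}}(X_{(l)})$ under which $\FB_{(l)}(X)$ is stable, and on $\FB_{(l)}(X)$ this action factors through $T$ (the kernel $\mu_{l}^{d}$ fixes each fibre $(F^{(l)})^{-1}(t)$); hence the $T$-fixed distinguished point $Z_{\sigma}$ is $T_{(l)}$-invariant as a subscheme of $X_{(l)}$, so $\fb_{\sigma}$ is homogeneous for the $(1/l)M$-grading, i.e.\ monomial. This is genuinely different from the paper's proof, which instead repairs precisely the gap you flag in your first approach. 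Rather than producing a Gr\"obner basis for one chosen $w_{0}$ and then trying to perturb, the paper takes a finite \emph{universal} Gr\"obner basis $\{f_{1},\dots,f_{s}\}$ of the auxiliary polynomial ideal $J$ from the proof of Proposition~\ref{prop-grobner}; its image $\{\phi(f_{1}),\dots,\phi(f_{s})\}$ is then a Gr\"obner basis of $\fa_{l}$ for \emph{every} $w\in\relint A_{\RR}^{\vee}$, so one may simply pick $w\in\relint\sigma$ off the finitely many tie hyperplanes and the initial forms $\initial_{w}\phi(f_{i})$ are monomials generating $\fb_{\sigma}$. Your route is more conceptual and sidesteps the Gr\"obner machinery entirely; the paper's route is more constructive, yields explicit monomial generators, and stays inside the computational framework used again in Lemma~\ref{lem-good-Grobner} and Proposition~\ref{prop-coord-ring}.
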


\begin{proof}
Let $J$ be as in the proof of Proposition \ref{prop-grobner}.
Then there exists a finite universal Gr\"obner basis $\{f_1,\dots,f_s\}$ of $J$, that is, a 
Gr\"obner basis for every total monomial order (see \cite[Corollary 1.3]{MR1363949}).
Then $\{\phi(f_1), \dots,\phi(f_s)\}$ is a Gr\"obner basis
of $\fa_{l}$ with respect to every $w \in \relint A_{\RR}^{\vee}$. 
 Since $\sigma$ is $d$-dimensional,
there exists $w \in \relint \sigma$
such that for every $i$, $\initial _w \phi(f_i)$ is a monomial.
It follows that $\fb_\sigma = \initial _w \fa_{l}$
is generated by the monomials $\initial _w \phi(f_i)$.
\end{proof}

\begin{lem}\label{lem-good-Grobner}
Let $\sigma \in \Delta_{A,l}$ be a $d$-dimensional cone,
$B_\sigma \subset (1/l) \cdot A$ the subset corresponding to the monomial ideal
$\fb_\sigma$, $w \in \relint \sigma $ and
\[
 \Lambda _ \sigma := \{ x^a -x^b \in \Lambda \mid x^a -x^b \ne 0, \, b \notin B_\sigma\}.
\]
Then there exists a finite subset  of $\Lambda_\sigma$ which is a Gr\"obner basis of
$\fa_{l}$ with respect to $w$. 
Hence $\Lambda_\sigma$ is also a Gr\"obner basis of
$\fa_{l}$ with respect to $w$.
\end{lem}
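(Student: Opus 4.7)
The plan is to construct the desired finite subset as a reduced Gr\"obner basis of $\fa_{l}$ with respect to a total monomial order refining $w$. First I would choose a total monomial order $>$ on $k[(1/l)\cdot A]$ refining the partial order $>_w$ determined by $w$. Since $\fb_\sigma = \initial_w \fa_{l}$ is already a monomial ideal by Lemma \ref{lem-monomial-ideal}, the standard refinement identity for initial ideals (cf.\ \cite[Ch.~1]{MR1363949}) gives
\[
\initial_> \fa_{l} = \initial_>(\initial_w \fa_{l}) = \initial_> \fb_\sigma = \fb_\sigma.
\]

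Next I would take the reduced Gr\"obner basis $G$ of $\fa_{l}$ with respect to $>$, which is finite by Noetherianity of $k[(1/l)\cdot A]$. I claim $G \subset \Lambda$, i.e.\ every $g \in G$ has the form $x^a - x^b$ with $a - b \in M$. The ideal $\fa_{l}$ is homogeneous with respect to the natural grading of $k[(1/l)\cdot A]$ by the finite abelian group $(1/l)\cdot M / M$ (a binomial $x^a - x^b$ being homogeneous in this grading exactly when $a-b \in M$), so each element of the reduced Gr\"obner basis is homogeneous in this grading; combined with the standard fact that the reduced Gr\"obner basis of a binomial ideal is a set of binomials, this yields $G \subset \Lambda$. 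By the \emph{reduced} property of $G$, the leading monomial $x^a$ of each $g = x^a - x^b \in G$ lies in $\initial_> \fa_{l} = \fb_\sigma$ while the trailing monomial $x^b$ does not; hence $b \notin B_\sigma$ and $g \in \Lambda_\sigma$.

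Finally, the same refinement fact ensures that any Gr\"obner basis of $\fa_{l}$ with respect to $>$ is also a Gr\"obner basis with respect to $w$. Therefore $G$ is a finite subset of $\Lambda_\sigma$ forming a Gr\"obner basis of $\fa_{l}$ with respect to $w$, which proves the first assertion; the second is immediate, since every set containing a Gr\"obner basis is itself a Gr\"obner basis.

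The step I expect to require the most care is the verification that $G \subset \Lambda$. To avoid citing the binomial-ideal fact externally, one can instead reprise the extrinsic computation of Proposition \ref{prop-grobner}: run Buchberger's algorithm in the polynomial ring lift $k[y_1^{1/l},\dots,y_n^{1/l}]$ with a monomial order refining $>_w$, and check by direct inspection that each S-polynomial formation and reduction step preserves membership in the set of pure binomials of the relevant shape, so that the output, pushed forward under $\phi$, lies in $\Lambda_\sigma$.
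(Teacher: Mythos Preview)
Your proof is correct and follows essentially the same approach as the paper: choose a total monomial order $>$ refining $>_w$, take the reduced Gr\"obner basis of $\fa_l$ with respect to $>$, and check that reducedness forces it into $\Lambda_\sigma$. The paper's proof is terser---it compresses your careful verification that $G \subset \Lambda_\sigma$ into the phrase ``by definition'' together with a citation to \cite[Lemma~11.13]{MR1363949}---but the content is the same.
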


\begin{proof}
Let $>$ be a total monomial order on $k[(1/l) \cdot A]$ which refines $>_{w}$.
There exists a unique reduced Gr\"obner basis of $\fa_{l}$ for $>$ \cite[Lemma 11.13]{MR1363949},
which is contained in $\Lambda_{\sigma}$ by definition.
\end{proof}

\subsubsection{Coordinate rings of affine charts of $\FB_{(l)}(X)$}

Suppose that $A$ contains no nontrivial group, equivalently, $A_{\RR}^{\vee}$ is $d$-dimensional.
Then the affine open subvarieties $ U_{\sigma}$
associated to the $d$-dimensional cones $\sigma \in \Delta_{A,l}$ 
form an open covering of $\FB_{(l)}(X)$. The following proposition gives an explicit
expression of the coordinate rings of $U_{\sigma}$.

\begin{prop}\label{prop-coord-ring}
Let $\sigma \in \Delta _{A,l}$ be a $d$-dimensional cone,
$w \in \relint \sigma $ and $I \subset \Lambda_\sigma$
a Gr\"obner basis with respect to $w$.
Then the coordinate ring  of $U_\sigma$ is
\[
 k[U_\sigma]=k[x^{a-b} \mid x^a-x^b \in I]= k[x^{a-b} \mid x^a-x^b \in \Lambda_\sigma].
\]
\end{prop}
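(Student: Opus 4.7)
The plan is to establish the two claimed equalities by sandwiching the three subrings of $k[M]$:
\[
k[x^{a-b}; x^a - x^b \in I] \;\subseteq\; k[x^{a-b}; x^a - x^b \in \Lambda_\sigma] \;\subseteq\; k[U_\sigma] \;\subseteq\; k[x^{a-b}; x^a - x^b \in I].
\]
The first inclusion is immediate from $I \subseteq \Lambda_\sigma$.

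For the second inclusion, let $\cU_\sigma \subseteq U_\sigma \times X_{(l)}$ denote the restriction of the universal family. By Lemma \ref{lem-monomial-ideal}, the fiber over the distinguished point of $U_\sigma$ is $Z_\sigma$, cut out by the monomial ideal $\fb_\sigma$, so the complementary monomials $\{x^c : c \in C_\sigma\}$ (with $C_\sigma := (1/l)\cdot A \setminus B_\sigma$) form a $k$-basis of $\cO_{Z_\sigma}$; flatness together with Nakayama then gives that $\{1 \otimes x^c : c \in C_\sigma\}$ is a free $\cO_{U_\sigma}$-module basis of $\cO_{\cU_\sigma}$. For any $x^a - x^b \in \Lambda_\sigma$ (so $a \in B_\sigma$ and $b \in C_\sigma$, since $b \notin B_\sigma$ forces the initial term to be $x^a$), expand $1 \otimes x^a = \sum_{c \in C_\sigma} f_{a,c}\cdot(1 \otimes x^c)$ with $f_{a,c} \in \cO_{U_\sigma}$. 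Over the dense torus $T$ the family is the graph of $F^{(l)}$, so the relation $x^{a-b}\cdot(1 \otimes x^b) = 1 \otimes x^a$ holds in $\cO_{\cU_T}$ (using $a - b \in M$). Uniqueness of the basis expansion---combined with the observation that $b$ is the unique element of $C_\sigma$ with $a - b \in M$ (otherwise Lemma \ref{lem-q-binomial} would produce a binomial in $\fa_l$ whose initial term under $w$ forces a second element into $B_\sigma$, using full-dimensionality of $\sigma$)---yields $f_{a,b}|_T = x^{a-b}$. Since $\cO_{\cU_\sigma}$ is torsion-free over the integral domain $\cO_{U_\sigma}$, this equality propagates globally, and $x^{a-b} = f_{a,b} \in k[U_\sigma]$.

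For the third (and hardest) inclusion, I would construct an explicit flat family over $\tilde U_\sigma := \Spec R$ with $R := k[x^{a-b}; x^a - x^b \in I]$, and then match it to $U_\sigma$. Define $\tilde\cU \subseteq \tilde U_\sigma \times X_{(l)}$ by the ideal
\[
\tilde J := \bigl\langle 1 \otimes x^a - \xi_{a,b} \otimes x^b : x^a - x^b \in I \bigr\rangle \subseteq R \otimes k[(1/l)\cdot A],
\]
where $\xi_{a,b} \in R$ is the element identified with $x^{a-b}$. The principal technical step---and the main obstacle---is to show that $\{1 \otimes x^c : c \in C_\sigma\}$ is a free $R$-module basis of $(R \otimes k[(1/l)\cdot A])/\tilde J$, so that $\tilde\cU$ is flat of relative length $l^d$ over $\tilde U_\sigma$. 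I would tackle this by a multi-parameter Gr\"obner degeneration argument: the torus grades both $R$ and $k[(1/l)\cdot A]$, making $\tilde J$ homogeneous; specializing $\xi_{a,b} \to 0$ recovers $\fb_\sigma$, for which $C_\sigma$ is a $k$-complementary basis; the Gr\"obner property of $I$ with respect to $w$ supplies a termination argument reducing any $1 \otimes x^a$ modulo $\tilde J$ to an $R$-linear combination of $\{1 \otimes x^c\}$, and a weight/graded analysis together with Nakayama on the associated graded gives linear independence.

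Granted the flatness, universality of the Hilbert scheme yields a morphism $\tilde U_\sigma \to \Hilb_{l^d}(X_{(l)})$ which on the common torus $T$ coincides with the universal family (since $\xi_{a,b}|_T = x^{a-b}$ makes $\tilde J|_T$ coincide with the torus family's ideal), hence factors through $\FB_{(l)}(X)$. Torus equivariance together with the fact that the distinguished point of $\tilde U_\sigma$ maps to $Z_\sigma$ forces the image into $U_\sigma$, producing $\tilde U_\sigma \to U_\sigma$. Composed with the opposite morphism $U_\sigma \to \tilde U_\sigma$ from the second inclusion, both composites restrict to the identity on the dense torus $T$ and therefore coincide with the identity on the reduced separated schemes $\tilde U_\sigma$ and $U_\sigma$. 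Thus $R = k[U_\sigma]$, closing the chain and proving both equalities.
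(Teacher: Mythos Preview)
Your argument is correct and takes a genuinely different route from the paper's. Both proofs construct the same family $\tilde\cU$ over $\Spec R$ (your $\tilde J$ is the paper's $\fz$) and invoke universality to produce $\psi:\Spec R\to U_\sigma$; the divergence is in how one concludes that $\psi$ is an isomorphism. The paper works in one direction only: it computes the tangent map $T_s\psi$ explicitly, choosing minimal generators $x^{a_v-b_v}$ of $R$, writing down the associated tangent vectors $\epsilon_v\in T_sS$, and checking that $T\psi(\epsilon_v)\in\Hom(\fb_\sigma,k[(1/l)\cdot A]/\fb_\sigma)$ sends $x^{a_u}\mapsto\delta_{uv}\,x^{b_v}$; injectivity of $T\psi$ at the distinguished point then yields the isomorphism. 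Your sandwich argument instead manufactures the inverse $\phi:U_\sigma\to\Spec R$ a priori, by reading off each $x^{a-b}$ as the matrix coefficient of $1\otimes x^a$ against $1\otimes x^b$ in the free basis $\{1\otimes x^c:c\in C_\sigma\}$ of the universal family, and then argues that $\psi\circ\phi$ and $\phi\circ\psi$ agree with the identity on the dense torus. This is more conceptual and explains transparently \emph{why} each $x^{a-b}$ lies in $k[U_\sigma]$; the paper's route is shorter overall, since it never needs your second-inclusion step (the equality with $\Lambda_\sigma$ in place of $I$ follows just because $\Lambda_\sigma$ is itself a Gr\"obner basis, by Lemma~\ref{lem-good-Grobner}).

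On the flatness of $\tilde\cU$ the paper is just as terse as you are: it notes that the fibre at $s$ is $Z_\sigma$ and that over $k[M]\otimes k[(1/l)\cdot M]$ the ideal becomes the graph ideal, then simply asserts flatness. Your Gr\"obner-reduction sketch for spanning is exactly right. For linear independence, rather than ``graded Nakayama'' the cleanest finish is the observation you already made in the second inclusion: $C_\sigma$ is a full set of coset representatives for $M$ in $(1/l)\cdot M$, so the $l^d$ generators $\{1\otimes x^c\}$ become a basis over $\mathrm{Frac}(R)$; any $R$-linear relation among them therefore vanishes generically, hence identically since $R$ is a domain.
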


\begin{proof}
We first claim that for $x^a -x^b \in \Lambda_{\sigma}$,  $ w(a) > w (b) $. 
Indeed, on the contrary, if $w (a) \le w (b)$, then 
$\initial _w (x^{a} -x^{b})$, which is an element of $\fb_{\sigma}$,
 is either $ x^{a} -x^{b} $ or $x^{b}$.
Since $\fb_\sigma$ is a monomial ideal, we have $x^{b} \in \fb_\sigma$, which is a contradiction.

Hence the set
\[
\{ a-b \mid x^a-x^b \in I \} 
\]
 is contained in the half space $\{ w >0 \} \subset M_\RR$. 
 Put $R:= k[x^{a-b} \mid x^a-x^b \in I]$.
It follows that  
\[
\fm:= \langle x^{a-b} \mid x^a-x^b \in I \rangle \subset R
\]
 is  a maximal ideal.
Let $S:=\Spec R$, $s \in S$ the closed point defined by $\fm$
and $\cZ \subset  X_{(l)} \times S $ the subscheme defined by the ideal
\[
\fz:= \langle  x^{a}\otimes 1  -  x^{b} \otimes x^{a-b}  \mid  x^a-x^b \in I  \rangle 
\subset  k[(1/l) \cdot A] \otimes R .
\]
By construction, the fiber of $\cZ \to S$ over  $s$
has the defining ideal 
\[
\fz \cdot  k[(1/l) \cdot A] = \fz \cdot  ( k[(1/l) \cdot A] \otimes R/\fm ) =\langle  x^{a} \mid  x^a-x^b \in I \rangle  = \initial _w \fa_{l} = \fb_\sigma.
\]
Thus the fiber is in fact $Z_\sigma$.
Then since $ x^{-b}\otimes 1 $ is invertible in $  k[(1/l)\cdot M]\otimes k[M]$,
\[
 \fz \cdot ( k[(1/l)\cdot M] \otimes k[M]  )= \langle  x^{a-b}\otimes 1 -   1\otimes x^{a-b} \mid x^{a} -x^{b} \in I \rangle .
\]
Combining these, we see that $\cZ$ is a flat family which
generically coincides with the graph of $F^{(l)}_{T}$.

From the universality of the Hilbert scheme,
we have the birational morphism $\psi:S \to U_\sigma$ corresponding to 
the family $\cZ$. 
We will show that $\psi$ is an isomorphism. To do this, we need only to show that the map 
of the tangent spaces
\[
 T\psi:T_s S \to  T_ {Z_\sigma} U_\sigma \subset T_{Z_{\sigma}} \Hilb_{l^{d}} (X_{(l)}) =\Hom (\fb_\sigma, k[(1/l)\cdot A]/\fb_\sigma) 
\]
is injective.
Let  $(x^{a_v-b_v})_{v \in V}$ be a minimal set of generators of $R$
with $x^{a_{v}}-x^{b_{v}} \in I$.
Then the immersions
\[
 \epsilon_v: S_v := \Spec R / \langle x^{2a_v-2b_v}, x^{a_u-b_u} \mid  u \in  V \setminus \{v\} \rangle
\hookrightarrow S, \, v \in V
\]
are tangent vectors which are a basis of $T_s S$.
Let $\cZ_v \subset  X_{(l)}\times S_v $ be the restriction of the family $\cZ$ to $S_v$. It is defined by the ideal
\[
\langle  x^{a}\otimes 1  -   x^{b}\otimes x^{a-b} \mid  x^a-x^b \in I \rangle  
 \subset  k[(1/l) \cdot A]\otimes (R / \langle x^{2a_v-2b_v}, x^{a_u-b_u} \mid u \in V \setminus \{v\} \rangle).
\]
Hence for $v,u \in  V$,
\[
 (T\psi (\epsilon_v)) ( x^{a_u}) =  
\begin{cases}
x^{b_v}  & (v=u)  \\
0 & (v \ne u) .
\end{cases}
\]
Note that $x^{b_v}$ are  nonzero elements in $k[A]/\fb_\sigma$,
and hence $T\psi (\epsilon_v)$ are nonzero maps.
 Moreover from the above explicit expression, 
$ T\psi (\epsilon_v)$, $v \in V$, are  linearly independent.
Hence $T\psi$ is injective and so  $\psi$ is an isomorphism, which completes the proof.
\end{proof} 

A similar assertion for the $G$-Hilbert scheme was proved by
Craw, Maclagan and Thomas \cite[Theorem 5.2]{MR2356842}.

\subsection{Stability of the F-blowup sequence for a normal toric variety}\label{subsec-stability}

In this subsection, we suppose that $X$ is normal.
Fix an identification $M=\ZZ^d$ and give the standard Euclidean metric to $M_\RR = \RR^d$. 
Let $a_1,\dots,a_m \in A$ be the Hilbert basis, that is, the unique minimal set of generators. Then the ideal 
\[
\fm:= \langle x^{a_1}, \dots, x^{a_m} \rangle \subset k[A]
\]
 is the maximal ideal defining the distinguished point of $ X$. 
Set
\begin{gather*} 
\Theta := \bigcup_{i=1}^m ( A_\RR + a_i ) \subset A_\RR .
\end{gather*}
The ideal 
\[
 \fm _l : = \fm \cdot ((1/l) \cdot A)=  \langle x^{a_{1}}, \dots,x^{a_{m}}\rangle _{k[(1/l) \cdot A]} = k [\Theta \cap (1/l)\cdot M]
\]
defines the fiber of $F^{(l)}_{X} :X_{(l)} \to X$ over the distinguished point.
For every $d$-dimensional cone $\sigma \in \Delta_{A,l}$,
\[
 \fm _l \subset \fb_\sigma \text{ and } \Theta \cap (1/l) \cdot M \subset B_\sigma.
\]
Let  $D \in \RRpos$ be the diameter of $A_\RR \setminus \Theta$, that is,
\[
D:= \sup \{ |a-b| \mid a,b \in  A_\RR \setminus \Theta \}.
\]
Denote by $M_{\le D}$  the finite set of the lattice points $a \in M$ with $|a| \le D$. 
Choose a weight vector $w \in A_\RR^\vee \cap N $ such that 
for every $a \in M_{\le D} \setminus \{0\}$, $w (a) \ne 0$. 
Set $M^{w+}_{\le D} := \{a \in M_{\le D}|w(a) >0 \}$ and 
\[
\Xi:= \Theta  \cup \{ a \in A_\RR | \exists b \in A_\RR, \, a-b \in M^{w+}_{\le D}   \}.
\]
The following figures illustrate things introduced above in some two-dimensional case:

\setlength\unitlength{0.5pt}
\begin{tabular}{lccr}
\begin{minipage}{0.33\hsize}

\begin{center}
\begin{picture}(240,220)
\put(0,50){\vector(1,0){200}}
\put(50,0){\vector(0,1){200}}

\put(50,50){\circle*{5}}
\put(100,50){\circle*{5}}
\put(150,50){\circle*{5}}
\put(100,100){\circle*{5}}
\put(100,150){\circle*{5}}
\put(150,150){\circle*{5}}
\put(150,100){\circle*{5}}

\put(205,120){$\Theta$}

\put(60,160){\circle*{5}}
\put(65,158){$\in A$}

\put(80,80){$D$}

\thicklines

\put(50,50){\line(1,0){150}}
\put(50,50){\line(1,2){70}}

\put(50,50){\vector(3,4){75}}
\put(125,150){\vector(-3,-4){75}}

\thinlines

\put(100,50){\line(1,2){25}}
\put(100,100){\line(1,2){25}}
\put(100,100){\line(1,0){25}}
\put(100,150){\line(1,0){25}}

\put(105,60){\line(1,-1){10}}
\put(110,70){\line(1,-1){20}}
\put(115,80){\line(1,-1){30}}
\put(120,90){\line(1,-1){40}}

\put(105,110){\line(1,-1){10}}
\put(110,120){\line(1,-1){70}}
\put(115,130){\line(1,-1){80}}
\put(120,140){\line(1,-1){80}}

\put(105,160){\line(1,-1){10}}
\put(110,170){\line(1,-1){90}}
\put(115,180){\line(1,-1){85}}
\put(120,190){\line(1,-1){80}}
\put(133,190){\line(1,-1){67}}
\put(146,190){\line(1,-1){54}}
\put(159,190){\line(1,-1){41}}
\put(172,190){\line(1,-1){28}}
\put(185,190){\line(1,-1){15}}

\end{picture}
\end{center}

\end{minipage}

\begin{minipage}{0.33\hsize}

\begin{center}
\begin{picture}(260,220)(-25,-10)
\put(0,50){\vector(1,0){200}}
\put(50,0){\vector(0,1){200}}

\put(50,50){\circle*{5}}
\put(100,50){\circle*{5}}
\put(150,50){\circle*{5}}
\put(100,100){\circle*{5}}
\put(100,150){\circle*{5}}

\put(150,100){\circle*{5}}
\put(50,100){\circle*{5}}
\put(50,150){\circle*{5}}
\put(100,0){\circle*{5}}
\put(150,0){\circle*{5}}

\put(80,80){$D$}

\put(0,200){\line(1,-3){70}}

\put(-25,200){{\small $w=0$}}

\qbezier(10.4742,168.585)(72.6047,189.296)(125,150)
\qbezier(175,50)(175,112.5)(125,150)
\qbezier(175,50)(175,18.4193)(160,-9.3717)

\put(150,165){$\in M_{\le D}^{w+}$}
\put(145,167){\circle*{5}}

\put(52,75){$c$}

\thicklines
\put(50,50){\vector(0,1){50}}

\put(50,50){\vector(3,4){75}}
\put(125,150){\vector(-3,-4){75}}

\end{picture}
\end{center}
\end{minipage}

\begin{minipage}{0.33\hsize}
\begin{center}
\begin{picture}(240,220)(-20,0)
\put(0,50){\vector(1,0){200}}
\put(50,0){\vector(0,1){200}}

\put(50,50){\circle*{5}}
\put(100,50){\circle*{5}}
\put(150,50){\circle*{5}}
\put(100,100){\circle*{5}}
\put(100,150){\circle*{5}}
\put(150,150){\circle*{5}}
\put(150,100){\circle*{5}}

\put(210,120){$\Xi$}

\put(77,75){$c$}

\thicklines

\put(50,50){\line(1,0){150}}
\put(50,50){\line(1,2){70}}

\put(75,50){\vector(0,1){50}}

\thinlines

\put(100,50){\line(1,2){25}}
\put(75,100){\line(1,0){50}}

\put(105,60){\line(1,-1){10}}
\put(110,70){\line(1,-1){20}}
\put(115,80){\line(1,-1){30}}
\put(120,90){\line(1,-1){40}}

\put(91.667,133.333){\line(1,-1){82.7}}
\put(86.667,123.333){\line(1,-1){22.7}}
\put(81.667,113.333){\line(1,-1){12.7}}
\put(96.667,143.333){\line(1,-1){92.7}}
\put(101.667,153.333){\line(1,-1){97.7}}

\put(106,162){\line(1,-1){93}}
\put(110,170){\line(1,-1){90}}
\put(115,180){\line(1,-1){85}}
\put(120,190){\line(1,-1){80}}
\put(133,190){\line(1,-1){67}}
\put(146,190){\line(1,-1){54}}
\put(159,190){\line(1,-1){41}}
\put(172,190){\line(1,-1){28}}
\put(185,190){\line(1,-1){15}}

\end{picture}
\end{center}
\end{minipage}
\end{tabular}

\begin{lem}\label{lem-Xi}
Let $>$ be a total order on $M$ refining the partial order determined by $w$.
Then 
\[
\Xi = \{ a \in A_{\RR} \mid \exists b \in A_{\RR},\, a-b \in M,\, a>b \}.
\]
\end{lem}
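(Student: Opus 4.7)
The plan is to prove both inclusions of the claimed equality, writing $\Xi'$ for the right-hand side for brevity.

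For $\Xi \subseteq \Xi'$: if $a \in \Theta$, write $a = c + a_i$ with $c \in A_\RR$ and $a_i$ a Hilbert basis element, and set $b := c$. Then $a - b = a_i \in M$, and $a > b$ follows from $w(a_i) > 0$ together with the refinement property ($w$ being strictly positive on every Hilbert basis element is satisfied by a generic $w$ in the interior of $A_\RR^\vee$, which is compatible with the stated choice of $w$). If instead $a \in \Xi \setminus \Theta$, then by definition there is some $b \in A_\RR$ with $a - b \in M^{w+}_{\le D}$; then $w(a-b) > 0$ gives $a > b$ by the refinement.

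For $\Xi' \subseteq \Xi$: assume $a, b \in A_\RR$, $a - b \in M$, and $a > b$. If $a \in \Theta$, we are done, so suppose $a \in A_\RR \setminus \Theta$. The first step is to reduce to the case $b \in A_\RR \setminus \Theta$. If $b \in \Theta$, write $b = c + a_i$ with $c \in A_\RR$ and $a_i$ a Hilbert basis element; then $a - c = (a-b) + a_i \in M$ and $a > c$ by transitivity (using $b > c$, which holds since $a_i > 0$). Replace $b$ by $c$ and iterate. This must terminate: $w(b)$ strictly decreases by $w(a_i) > 0$ at each step, while remaining nonnegative on $A_\RR$. Once $b \in A_\RR \setminus \Theta$, the diameter bound for $A_\RR \setminus \Theta$ gives $|a - b| \le D$, so $a - b \in M_{\le D}$. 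From $a > b$ the refinement forces $w(a) \ge w(b)$ (otherwise $w(b) > w(a)$ would force $b > a$, a contradiction), and since $a - b$ is a nonzero element of $M_{\le D}$, the choice of $w$ gives $w(a-b) \ne 0$. Combining, $w(a-b) > 0$, so $a - b \in M^{w+}_{\le D}$ and $a \in \Xi$.

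The only subtle point is the termination of the iterative reduction in the second inclusion, which is handled by the positivity of $w$ on Hilbert basis elements. The rest of the argument is careful bookkeeping of the definitions together with the one-sided implication $a > b \Rightarrow w(a) \ge w(b)$ that the refinement provides.
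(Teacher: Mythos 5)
Your proof is correct and takes essentially the same route as the paper's: the paper likewise reduces to the case $b \notin \Theta$ by replacing $b$, then splits on whether $|a-b| > D$, leaving the termination of that reduction and the deduction $w(a-b) > 0$ from $a > b$ implicit. You have simply spelled out those details, including the needed positivity $w(a_i) > 0$ on the Hilbert basis, which the paper also uses tacitly (via $a_i \in L'_{w}$) in the surrounding arguments.
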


\begin{proof}
Denote by $\Xi'$ the right hand side.
Obviously $ \Xi \subset \Xi' $.
Let $a \in \Xi'$ and $b \in A_{\RR}$ be such that $a-b \in M$ and $a > b$.
We will show that $a \in \Xi$.
If necessary, replacing $b$, we may suppose that $b \notin \Theta$. 
If $|a-b|>D$, then $a \in \Theta \subset \Xi$. 
Otherwise,  by definition, $a \in \Xi$. Thus in any case,
$a \in \Xi$. We have proved the lemma. 
\end{proof}

\begin{lem}\label{lem-stab}
Suppose  that $w $ is in $\relint \sigma$ for a $d$-dimensional cone $\sigma \in \Delta_{A,l}$.
Then $ B_\sigma =\Xi \cap (1/l)\cdot M$. 
\end{lem}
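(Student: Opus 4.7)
The plan is to prove both inclusions of $B_\sigma = \Xi \cap (1/l)\cdot M$ separately, exploiting the description of $\Xi$ from Lemma \ref{lem-Xi} together with the Gröbner bases supplied by Proposition \ref{prop-grobner} and Lemma \ref{lem-good-Grobner}. Normality of $X$ enters through the identification $(1/l)\cdot A = A_\RR \cap (1/l)\cdot M$: whenever $a \in (1/l)\cdot A$ and $b \in A_\RR$ satisfy $a-b \in M$, automatically $b \in (1/l)\cdot A$, so $x^a - x^b$ lies in the binomial set $\Lambda$ generating $\fa_l$.

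For the inclusion $\Xi \cap (1/l)\cdot M \subset B_\sigma$: take $a \in \Xi \cap (1/l)\cdot M$, which by normality lies in $(1/l)\cdot A$. If $a \in \Theta$, then $a = a_i + e$ for some Hilbert basis vector $a_i$ and $e \in (1/l)\cdot A$, so $x^a \in \fm_l \subset \fb_\sigma$. Otherwise, Lemma \ref{lem-Xi} furnishes $b \in A_\RR \setminus \Theta$ with $a-b \in M$ and $a > b$. The choice $b \notin \Theta$ forces $|a - b| \le D$, so $a - b \in M_{\le D} \setminus \{0\}$, and the choice of $w$ then yields $w(a) > w(b)$ strictly. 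Using the highest-weight convention for $\initial_w$ as in the proof of Proposition \ref{prop-coord-ring}, we conclude $\initial_w(x^a - x^b) = x^a \in \fb_\sigma$, so $a \in B_\sigma$.

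For the reverse inclusion $B_\sigma \subset \Xi \cap (1/l)\cdot M$: by Lemma \ref{lem-good-Grobner} the set $\Lambda_\sigma$ is a Gröbner basis of $\fa_l$ with respect to $w$, and the claim inside the proof of Proposition \ref{prop-coord-ring} gives $w(c) > w(d)$ for every $x^c - x^d \in \Lambda_\sigma$. Hence $\fb_\sigma$ is the monomial ideal generated by the set $\{x^c \mid x^c - x^d \in \Lambda_\sigma \text{ for some } d\}$. If $x^a \in \fb_\sigma$, then $a = c + e$ for some such $c, d$ and some $e \in (1/l)\cdot A$. Setting $b := d + e \in (1/l)\cdot A \subset A_\RR$ gives $a - b = c - d \in M$ and $w(a) - w(b) = w(c) - w(d) > 0$, so $a > b$ in any total order refining $w$, which by Lemma \ref{lem-Xi} places $a$ in $\Xi$.

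The conceptual content is minimal: the only substantive input is normality, which identifies $A_\RR \cap (1/l)\cdot M$ with $(1/l)\cdot A$ and lets us realize combinatorial witnesses as actual elements of $\Lambda$. I expect the main friction to be clerical — keeping $\Lambda$ separate from $\Lambda_\sigma$ and staying faithful to the author's highest-weight convention for $\initial_w$ — rather than any real obstacle; once that is pinned down, the argument is a direct translation between the monomial-ideal description of $\fb_\sigma$ and the combinatorial description of $\Xi$.
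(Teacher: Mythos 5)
Your proof is correct and follows the same route as the paper's (one-line) proof: the paper simply asserts that $B_\sigma=\{a\in(1/l)\cdot A\;;\;\exists b\in(1/l)\cdot A,\ a-b\in M,\ a>b\}=\Xi\cap(1/l)\cdot M$, the first equality being the Gr\"obner-basis description of $\initial_w\fa_l$ via $\Lambda$ and the second being Lemma \ref{lem-Xi} plus normality. You have merely written out the details that the paper leaves implicit, so no further comment is needed.
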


\begin{proof}
Let $>$ be a total order on $(1/l)\cdot A$ refining the partial order
determined by $w$. Then 
\begin{align*}
B_{\sigma} & = \{ a \in (1/l)\cdot A \mid \exists b \in (1/l)\cdot A ,\, a-b \in M,\, a>b  \}=\Xi \cap (1/l)\cdot M . 
\end{align*}
\end{proof}

We define  $L_{w} \subset M $ to be  the submonoid generated by
\[
 L'_{w}:=\{ c \in M^{w+}_{\le D} \mid  \exists a \in A_\RR, \exists b \in  A_\RR \setminus \Xi, \,  a-b =c \} .
\]
Since $a_{i} \in L'_{w}$, we have $A \subset L_{w}$.

\begin{prop}\label{prop-stab-coord}
Suppose that $w \in \relint \sigma$ for a $d$-dimensional $\sigma \in \Delta_{A,l}$. 
Then $k[U_\sigma] \subset k[L_{w}]$. Moreover if $l$ is sufficiently large,
then $k[U_\sigma]  = k[L_{w}]$.
\end{prop}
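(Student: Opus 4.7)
The plan is to apply Proposition \ref{prop-coord-ring} to write down explicit generators of $k[U_\sigma]$ and then match them against those of $k[L_w]$. By that proposition combined with Lemma \ref{lem-stab} (which identifies $B_\sigma$ with $\Xi \cap (1/l)\cdot M$), $k[U_\sigma]$ is generated by monomials $x^c$, where $c = a-b$ for $a \in (1/l)\cdot A$, $b \in (1/l)\cdot A \cap (A_\RR \setminus \Xi)$, $a \ne b$, and $a - b \in M$; the inequality $w(a) > w(b)$ recorded in the proof of Proposition \ref{prop-coord-ring} ensures $w(c) > 0$. So the first inclusion reduces to the combinatorial assertion that every such $c$ lies in $L_w$, while the reverse inclusion, for $l$ large, reduces to realizing each generator of $L_w$ as $a-b$ for such a pair.

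For $k[U_\sigma] \subset k[L_w]$, I would induct on the positive integer $w(c)$. The base case is $|c| \le D$: then $c \in M^{w+}_{\le D}$, and the pair $(a,b)$ itself witnesses $c \in L'_w \subset L_w$. For the inductive step $|c| > D$, the assumption $b \in A_\RR \setminus \Xi \subset A_\RR \setminus \Theta$ together with the definition of $D$ as the diameter of $A_\RR \setminus \Theta$ forces $a = b+c \notin A_\RR \setminus \Theta$, hence $a \in \Theta$, so $a - a_i \in A_\RR$ for some Hilbert basis element $a_i$. When $w(a_i) < w(c)$ the pair $(a - a_i,\, b)$ witnesses $c - a_i$ in the same class of admissible generators; the induction hypothesis then gives $c - a_i \in L_w$, and since $a_i \in L'_w$ one concludes $c = a_i + (c - a_i) \in L_w$. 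When instead $w(a_i) \ge w(c)$ for every compatible Hilbert basis element, the Hilbert-basis reduction fails, and one must find an element $c_1 \in L'_w$ with $a - c_1 \in A_\RR$ and $w(c_1) < w(c)$. I would extract such a $c_1$ from the minimality characterization of $A_\RR \setminus \Xi$ given by Lemma \ref{lem-Xi}, using that $w \in \relint A_\RR^\vee$ makes the slice $\{p \in A_\RR : w(p) \le w(a)\}$ a bounded polytope in which a walk from $b$ to $a$ by small lattice steps is possible.

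For the equality $k[U_\sigma] = k[L_w]$ when $l \gg 0$, I would show each generator $c \in L'_w$ is realized as $a - b$ for some admissible pair. Given a defining witness $c = a' - b'$ with $a' \in A_\RR$ and $b' \in A_\RR \setminus \Xi$, observe that $\Xi$ is a finite union of closed subsets of $A_\RR$ (namely $\Theta$ and one piece of the form $A_\RR \cap (A_\RR + c'')$ for each $c'' \in M^{w+}_{\le D}$), hence closed, so $A_\RR \setminus \Xi$ is open. For $l$ large, the dense set $(1/l)\cdot A = A_\RR \cap (1/l)\cdot M$ meets a small neighborhood of $b'$ inside $A_\RR \setminus \Xi$; choose such a $b$ and set $a := b + c$. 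After possibly perturbing $b'$ slightly to move $a'$ into the interior of $A_\RR$, the point $a$ remains in $A_\RR$ for $b$ close enough to $b'$, and normality $A = A_\RR \cap M$ gives $la = lb + lc \in A_\RR \cap M = A$, so $a \in (1/l)\cdot A$. Since $L'_w \subset M^{w+}_{\le D}$ is finite, a single $l_0$ works for all generators simultaneously.

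The main obstacle I anticipate is the inductive step in the case $|c| > D$ when no compatible Hilbert basis $a_i$ has $w(a_i) < w(c)$: there the Hilbert-basis subtraction cannot reduce $c$, and one must exploit the full set $L'_w$. Handling this cleanly will require the minimality description of $A_\RR \setminus \Xi$ from Lemma \ref{lem-Xi} and the boundedness of $w(b)$ on $A_\RR \setminus \Xi$ forced by the defining constraints on $\Xi$.
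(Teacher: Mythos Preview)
Your overall plan is correct, but the ``obstacle'' you flag at the end is a phantom. Suppose $|c|>D$ and $a-a_i\in A_\RR$ for some Hilbert-basis element $a_i$. If $a-a_i=b$ then $c=a_i$ and $|c|=|a_i|\le D$, a contradiction; otherwise the pair $(a-a_i,b)$ again defines an element of $\Lambda_\sigma$, since $a-a_i\in A_\RR\cap(1/l)\cdot M=(1/l)\cdot A$ by normality and $b\notin B_\sigma$ is unchanged. The very inequality you already cite from the proof of Proposition~\ref{prop-coord-ring} then forces $w(a-a_i)>w(b)$, i.e.\ $w(a_i)<w(c)$. So the case you worry about never arises, your induction runs unobstructed, and the vague ``walk by small lattice steps'' is unnecessary.

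The paper's argument for the inclusion $k[U_\sigma]\subset k[L_w]$ is organized differently and is shorter. Rather than showing that every exponent arising from $\Lambda_\sigma$ lies in $L_w$, it works with a \emph{finite} Gr\"obner basis $I\subset\Lambda_\sigma$ containing $x^{a_1}-1,\dots,x^{a_m}-1$ and simply discards any further $x^{a}-x^{b}\in I$ with $|a-b|>D$: since $b\notin\Theta$ and $|a-b|>D$ force $a\in\Theta$, one has $x^{a}\in\fm_l\subset\fb_\sigma$, so the binomial is redundant for generating $\fb_\sigma$ and may be removed without spoiling the Gr\"obner property. After pruning, every surviving $a-b$ lies directly in $L'_w$, and Proposition~\ref{prop-coord-ring} gives the inclusion in one line. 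Your induction and this pruning are the same reduction viewed from opposite ends; the paper's packaging is just more economical. For the reverse inclusion your perturbation argument matches the paper's, which phrases it as translating by a small $\delta\in A_\RR$ rather than perturbing $b'$; that choice automatically keeps $a+\delta\in A_\RR$ and spares you the interior-point maneuver.
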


\begin{proof}
Let $\{a_1, \dots,a_m\} $ be the Hilbert basis of $A$ as above. 
Then there exists a subset 
\[
I :=  \{ x^{a_1} -1,\dots, x^{a_m}-1, x^{a_{m+1}} - x^{b_{m+1}} , x^{a_{m+2}} - x^{b_{m+2}} , \dots \}   \subset \Lambda_\sigma
\]
which is a Gr\"obner basis with respect to $w$. 
If necessary, replacing $b_{i}$, we may suppose that $b _{i} \notin \Theta$ for every $i$. 
Indeed if $b_{i} \notin \Theta$, then $b_i = b_i' + a_j $ for some $1 \le j \le m$
and $b_i' \in (1/l) \cdot A$.
Then  we can replace $x^{a_i}-x^{b_i} $ in $I$ with
\[
( x^{a_i}-x^{b_i}) + x^{b_i'}(x^{a_j} - 1) = x^{a_i} -  x^{b_i'}.
\]
Repeating this replacement iteratively, we obtain such a Gr\"obner basis $I$
with $b _{i} \notin \Theta$.

If for some $i > m$, $|a_i-b_i| >D$, then 
\[
x^ {a_i} \in \fm_l = \langle  x^{a_1},\dots, x^{a_m} \rangle_{k[(1/l)\cdot A]} . 
\]
Since it does not contribute to generation of $\fb_{\sigma}$,
 we may remove all such  binomials from $I$. 
From Proposition \ref{prop-coord-ring},
\[
 k[U_\sigma] =k[ x^{a-b} \mid x^a -x^b \in I] \subset k[L_{w}].
\] 

Let $c \in L_{w}'$, $a\in A_\RR$ and $b \in A_\RR \setminus \Xi$  be such that $a-b =c$. If  $l \gg 0$, then there exists $\delta  \in A_\RR$ such that
$b + \delta \in (1/l) \cdot A \setminus \Xi$. Then $x^{a+\delta} -x^{b+\delta} \in \Lambda_\sigma$ and
\[
 x^{c} =x^{(a+\delta) -(b+\delta)} \in k[U_\sigma] . 
\]
Thus $k[L_{w}] \subset k[U_{\sigma}]$, which completes  the proof.
\end{proof}

\begin{thm}\label{thm-stability}
Let $X$ be a normal toric variety. Then there exists $l_0 \in \ZZnonneg$ such that for every $l \in \ZZpos$, there exists a natural birational morphism $\FB_{(l_0)} (X) \to \FB_{(l)}( X)$, which is an isomorphism if $l \ge l_0$.
\end{thm}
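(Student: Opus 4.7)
My plan is to exhibit an $l$-independent fan $\Delta^\infty$ on $A_\RR^\vee$ that refines $\Delta_{A,l}$ for every $l$ and that coincides with $\Delta_{A,l}$ once $l$ is large enough, and then to deduce the required morphisms from inclusions of coordinate rings of toric charts supplied by Proposition \ref{prop-stab-coord}. Concretely, I would take $\Delta^\infty$ to be the fan induced on $A_\RR^\vee$ by the finite hyperplane arrangement $\{w(a) = 0 : a \in M_{\le D}\setminus\{0\}\} \subset N_\RR$. On the relative interior of each cone of $\Delta^\infty$ the set $M^{w+}_{\le D}$ is constant, so $\Xi$ and the monoid $L_w$ depend only on the cone; in particular, only finitely many distinct $\Xi$ and $L_w$ appear as $w$ varies over $A_\RR^\vee$.

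First I would verify that $\Delta^\infty$ refines $\Delta_{A,l}$ for every $l$: by Lemma \ref{lem-stab}, the index set $B_\sigma$ of the initial monomial ideal $\fb_\sigma$ attached to a $d$-cone $\sigma \in \Delta_{A,l}$ with $w \in \relint\sigma$ equals $\Xi \cap (1/l)\cdot M$, which depends on $w$ only through $\Xi$, hence only through the cone of $\Delta^\infty$ through $w$. Next I would choose $l_0$ large enough to satisfy two conditions simultaneously: (i) for every $l \ge l_0$, the finitely many distinct sets $\Xi$ remain distinguished by their intersections with $(1/l)\cdot M$, forcing $\Delta_{A,l} = \Delta^\infty$; and (ii) the conclusion of Proposition \ref{prop-stab-coord} holds uniformly across the finitely many $d$-cones of $\Delta^\infty$, so that each chart $U_\sigma \subset \FB_{(l)}(X)$ has coordinate ring exactly $k[L_w]$ for $w \in \relint\sigma$. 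A single such $l_0$ exists thanks to the finiteness established at the outset. For $l \ge l_0$, $\FB_{(l)}(X)$ is then canonically independent of $l$.

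For an arbitrary $l \in \ZZpos$, I would construct the morphism $\FB_{(l_0)}(X) \to \FB_{(l)}(X)$ chart by chart: each $d$-cone $\sigma' \in \Delta_{A,l_0} = \Delta^\infty$ lies in a unique $d$-cone $\tau \in \Delta_{A,l}$, and Proposition \ref{prop-stab-coord} supplies $k[U^{(l)}_\tau] \subset k[L_w] = k[U^{(l_0)}_{\sigma'}]$ for any $w \in \relint\sigma'$, giving a morphism $U^{(l_0)}_{\sigma'} \to U^{(l)}_\tau$; these glue along the common open torus $T$ into the desired morphism over $X$, which is an isomorphism exactly when $l \ge l_0$. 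The main obstacle will be the simultaneous stabilization step (ii): arranging that the fan $\Delta_{A,l}$ and the coordinate ring of every maximal chart reach their limits under a single threshold $l_0$. The finiteness produced at the outset is precisely what makes this uniformity possible.
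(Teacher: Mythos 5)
Your proof is correct and takes essentially the same route as the paper: the paper's (much terser) argument also glues the affine pieces $\Spec k[L_w]$ into a toric variety $Y$ and uses the inclusions $k[U_\sigma]\hookrightarrow k[L_w]$ of Proposition \ref{prop-stab-coord} to produce $Y\to\FB_{(l)}(X)$ for all $l$, an isomorphism for $l\gg 0$. Your explicit fan $\Delta^\infty$, the refinement of $\Delta_{A,l}$, and the uniform choice of $l_0$ are just the details the paper leaves implicit in the phrase ``gluing the $\Spec k[L_w]$.''
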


\begin{proof}
Gluing the $\Spec k[L_{w}]$, we obtain a toric variety $Y$. 
The maps $ k[U_\sigma] \hookrightarrow k[L_{w}] $
induces a birational morphism $Y \to \FB_{(l)} (X) $ for every $l$, which is
an isomorphism for sufficiently large $l$. 
\end{proof}

\subsection{Boundedness of the F-blowup sequence for a non-normal toric variety}

\begin{thm}\label{thm-simultaneous}
Let $X$ be a (not necessarily normal) toric variety.
Then there exists a toric proper birational morphism 
$Y \to X$ such that for each 
$l \in \ZZpos$, it factors as $Y \to \FB_{(l)} (X) \to X$.
\end{thm}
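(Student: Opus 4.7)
The plan is to recycle the construction of Subsection \ref{subsec-stability}, observing that only one direction of Proposition \ref{prop-stab-coord}, namely the inclusion $k[U_\sigma] \subset k[L_{w}]$, is needed for a boundedness statement, and that this direction never used normality of $A$. After reducing as usual to the case where $A$ contains no nontrivial group (so that $A_\RR^\vee$ is $d$-dimensional and $\FB_{(l)}(X)$ is covered by the charts $U_\sigma$ associated to maximal $\sigma \in \Delta_{A,l}$), I would fix a minimal monoid generating set $a_1,\dots,a_m$ of $A$ and define $\Theta$, $D$, $\Xi$ and $L_{w} \subset M$ verbatim as in Subsection \ref{subsec-stability}. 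Since the $a_i$ generate the cone $A_\RR$, the set $A_\RR \setminus \Theta$ remains bounded and $D < \infty$; since $0 \in A_\RR \setminus \Xi$ and each $a_i$ lies in $A_\RR$, one has $a_i \in L_{w}'$ and hence $A \subset L_{w}$ for every $w$ in the interior of $A_\RR^\vee$. A direct inspection shows that Lemmas \ref{lem-Xi}, \ref{lem-stab}, and the first half of Proposition \ref{prop-stab-coord} go through with no change in the non-normal case; only the reverse inclusion $k[L_{w}] \subset k[U_\sigma]$ (for large $l$) in the second half of that proof uses normality, and it is not needed here.

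The key geometric observation is that $L_{w}$, viewed as a function of $w \in \relint A_\RR^\vee$, depends only on the signs of the values $w(c)$ for $c$ in the finite lattice set $M_{\le D}$. Intersecting $A_\RR^\vee$ with the finitely many hyperplanes $\{w(c)=0\}_{c \in M_{\le D}}$ therefore cuts out a single finite rational polyhedral fan $\Delta$ on $A_\RR^\vee$ on whose maximal cones $L_{w}$ is constant. By Lemma \ref{lem-stab}, the Gr\"obner fan $\Delta_{A,l}$ is a coarsening of $\Delta$ for every $l$, since its maximal cones are indexed by $B_\sigma = \Xi \cap (1/l)\cdot M$, and $\Xi$ itself is governed by the same sign data. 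Gluing the affine charts $\Spec k[L_{w}]$ along $\Delta$ yields a (possibly non-normal) toric variety $Y$; the inclusions $A \subset L_{w}$ and the equality $|\Delta| = A_\RR^\vee$ supply a natural toric proper birational morphism $Y \to X$.

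Finally, for each fixed $l$, the refinement of fans $\Delta \to \Delta_{A,l}$ combined with the local coordinate inclusions $k[U_\sigma] \subset k[L_{w}]$ (from the generalized first half of Proposition \ref{prop-stab-coord}, applied to each maximal $\sigma \in \Delta_{A,l}$ and each maximal cone of $\Delta$ contained in it) patch to a toric morphism $Y \to \FB_{(l)}(X)$ over $X$, which is the required factorization. The main point requiring care is the uniform-in-$l$ statement that a single fan $\Delta$, defined from a fixed finite subset of $M$, really refines every $\Delta_{A,l}$ and that the chart-by-chart coordinate inclusions glue to a global morphism. Once one uses Lemma \ref{lem-stab} to express the labels $B_\sigma$ of $\Delta_{A,l}$ entirely in terms of the $w$-signs on $M_{\le D}$, both of these become bookkeeping exercises, so the only real obstacle is checking compatibility of the local gluings, which follows from the binomial description of the universal family via Proposition \ref{prop-coord-ring}.
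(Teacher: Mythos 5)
Your overall architecture is the same as the paper's: reduce to $A$ without nontrivial subgroups, produce a single finite bound $D$, build one fan on $A_\RR^\vee$ from the sign data of $w$ on the finite set $M_{\le D}$, glue affine charts generated by short $w$-positive lattice vectors, and dominate each $\FB_{(l)}(X)$ chart by chart via $k[U_\sigma]\subset k[\,\cdot\,]$. However, there is a genuine gap in your claim that Lemma \ref{lem-stab} and the first half of Proposition \ref{prop-stab-coord} ``go through with no change'' without normality. Lemma \ref{lem-stab} rests on the identity $(1/l)\cdot A=A_\RR\cap(1/l)\cdot M$, which is exactly normality; for non-normal $A$ one only gets
\[
B_\sigma=\{a\in(1/l)\cdot A\mid \exists b\in(1/l)\cdot A,\ a-b\in M,\ a>b\}\subset \Xi\cap(1/l)\cdot M,
\]
and the inclusion can be strict because the witness $b$ is now required to lie in the smaller set $(1/l)\cdot A$. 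This breaks the step of Proposition \ref{prop-stab-coord} that you rely on: for a reduced Gr\"obner basis element $x^a-x^b\in\Lambda_\sigma$ one knows $b\notin B_\sigma$, but this no longer implies $b\notin\Xi$, so $a-b$ need not belong to $L'_w$ and the inclusion $k[U_\sigma]\subset k[L_w]$ is not established. A second, related problem is your constant $D=\operatorname{diam}(A_\RR\setminus\Theta)$: the monomials of $k[(1/l)\cdot A]$ outside $\fm_l=\fm\cdot((1/l)\cdot A)$ are indexed by $(1/l)\cdot A\setminus M_{A,l}$ with $M_{A,l}=\bigcup_i((1/l)\cdot A+a_i)$, which is contained in $\Theta\cap(1/l)\cdot M$ but can be strictly smaller, so the exceptional set $(1/l)\cdot A\setminus M_{A,l}$ can stick out of $A_\RR\setminus\Theta$ and your $D$ is too small to justify discarding binomials with $|a-b|>D$ (their initial terms need not lie in $\fm_l$).

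The paper sidesteps both issues at once: it replaces $A_\RR\setminus\Theta$ by $\Omega:=\bigcup_{l\ge1}(1/l)\cdot A\setminus M_{A,l}$, proves $\Omega$ is bounded, sets $D=\operatorname{diam}(\Omega)$, and then uses for its charts the full monoid generated by $M^{w+}_{\le D}$ rather than your finer $L_w$. With that coarser chart, the only input needed from the Gr\"obner basis is that every surviving generator $x^{a-b}$ satisfies $|a-b|\le D$ and $w(a-b)>0$ --- no knowledge of whether $b\in\Xi$ is required --- and since $L_w\subset\langle M^{w+}_{\le D}\rangle$ anyway, nothing is lost for a boundedness statement. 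Your proposal would become correct if you made these two replacements (use $\Omega$ for the bound, and drop the $\Xi$-condition from the definition of the chart monoids); as written, the appeal to the unmodified Lemma \ref{lem-stab} and to $L_w$ does not survive the loss of normality.
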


\begin{proof}
The proof of the theorem is similar to that of Proposition \ref{prop-stab-coord}.
Again we may  suppose that $X = \Spec k[A]$ and
$A$ contains no nontrivial group. Let $a_{1},\dots,a_{m} \in A$ be the Hilbert
basis and let $M_{A,l} \subset (1/l)\cdot A$ be the set corresponding to $\fm_l=\fm\cdot ((1/l) \cdot A)$.
Explicitly, we have 
\begin{align*}
M_{A,l} 
 = \bigcup _{i} ( (1/l)\cdot A + a_{i})
 = \{ \sum _{i} n_{i} a_{i} \mid n_{i}  \in (1/l) \cdot \ZZnonneg \text{ and } \exists i, \, n_{i} \ge 1 \}.
\end{align*}
Then the set 
\begin{align*}
\Omega 
 := \bigcup_{l \ge 1}  (1/l)\cdot  A \setminus M_{A,l} 
\subset  \{ \sum _{i} n_{i} a_{i} \mid n_{i}  \in \QQ_{\ge 0}, \, n_{i} < 1 \}
\end{align*}
is bounded. Denote by $D$ the diameter of $\Omega$ and
define $M_{\le D}$ and $M^{w+}_{\le D}$ as in the preceding subsection.
Then for a generic $w \in A_{\RR}^{\vee}$, 
if $\sigma \in \Delta_{A,l}$ is a $d$-dimensional cone with $w \in \relint \sigma$,
by a similar argument as the proof of Proposition \ref{prop-stab-coord}, we can show
\[
 k[U_{\sigma}] \subset k[M_{\le D}^{w+}].
\]
We now claim that the affine toric varieties $\Spec k[M_{\le D}^{w+}]$
are glued together and become a toric variety which is proper and 
birational over $X$. Once it is shown, the theorem immediately follows.

For a generic $w \in A_{\RR}^{\vee}$, the set $C_{w} := 
\{w' \in A_{\RR}^{\vee} \mid \forall a \in M_{\le D}^{w+} , \, w'(a) \ge 0 \}$ 
is a $d$-dimensional rational polyhedral cone. 
Conversely for an interior point $w' $ of $C_{w}$, 
we have $M_{\le D}^{w+} =\{ a \in M_{\le D} \mid w' (a)>0   \} $. 
As $w$ varies, the cones $C_{w}$
form a fan $\Delta$ with $|\Delta|=A_{\RR}^{\vee}$. 
If $C_{w_{1}}$ and $C_{w_{2}}$ share a facet with the supporting hyperplane $ \{w \mid w(a) = 0 \} $
for a primitive element $a \in M_{\le D}$, then the ring
$k[ M_{\le D}^{w_{1}+} \cup \{\pm a \} ] = k[ M_{\le D}^{w_{2}+} \cup \{\pm a \} ] $
is localizations both of $k[ M_{\le D}^{w_{1}+} ] $ and of $ k[ M_{\le D}^{w_{2}+} ] $.
Hence we can glue $\Spec k[ M_{\le D}^{w_{1}+} ] $ and $\Spec k[ M_{\le D}^{w_{2}+} ]$
along $ \Spec k[ M_{\le D}^{w_{i}+} \cup \{\pm a \} ] $.
Thus we can glue all the $ \Spec k[ M_{\le D}^{w+}]$ together in a compatible way
and obtain a toric variety $Y$. 
This proves the above claim
and finishes the proof.
\end{proof}

\subsection{A two-dimensional normal toric singularity}

Now suppose that $X=\Spec k[A]$ is normal and two-dimensional.
 The fan $\minDelta$ associated to the minimal resolution
of $X$  is  described as follows. 
Let $H \subset N_\RR = \RR^2$ be the convex hull of 
$( A_\RR^\vee \cap N ) \setminus \{0\}$.
Then the one-dimensional cones of $\minDelta$ is
the half-lines through the lattice points on the boundary of $H$.
(See \cite[\S 2.6]{MR1234037}).

Choose a two-dimensional cone $\tau \in \minDelta$ and fix an identification $N = \ZZ^2$
so that $\tau$ is spanned by $(1,0)$ and $(0,1) $.
Then the cone $A_\RR^\vee$ is spanned by 
$ (-s,t) $ and $(u,-v)$ for some $s,t,u,v \in \ZZnonneg$ such that
$\gcd(s,t)=\gcd(u,v)=1$. Moreover we may and shall assume that $s < t$ and $u > v$.

Suppose that $M=\ZZ^2$ is the dual of  $N=\ZZ^2$ in the standard way:
The bilinear form $M \times N \to \ZZ$ is given by
$\langle (a,b),(c,d) \rangle =ac+bd$.
Then $A_\RR$ is spanned by $(v,u)$ and $(t,s)$.
In particular, $A_\RR$ is contained in the first quadrant $\RRnonneg^2$ and contains $(1,1)$.

For each $l \in \ZZpos$, we fix a weight vector $w=w_{l} \in \relint A_\RR^\vee \cap N$ as follows:
Put 
$w:= (n,n+1)$ for $n \gg 0$ so that  $w \in \relint \sigma$ for some
 two-dimensional cone $\sigma$ of  $\Delta_{A,l}$. 

\begin{prop}\label{prop-main}
For $l \gg 0$,  $k[U_\sigma]=k[x_1,x_2]$. In particular, we have $\sigma = \tau$.
\end{prop}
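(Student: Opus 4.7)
By Proposition \ref{prop-stab-coord}, for $l$ sufficiently large $k[U_\sigma] = k[L_w]$, where $L_w \subset M = \ZZ^2$ is the submonoid generated by
\[
L'_w = \{c \in M^{w+}_{\le D} \mid \exists\, a \in A_\RR,\, b \in A_\RR \setminus \Xi,\, a - b = c\}.
\]
My plan is to prove $L_w = \ZZnonneg e_1 + \ZZnonneg e_2$, the standard first-quadrant monoid in $M$. Once this is established the coordinate ring becomes $k[x_1, x_2]$; and since the character monoid recovers the cone of an affine toric chart, the equality $\sigma = \tau$ will follow. I will decompose the desired equality of monoids into (i) $e_1, e_2 \in L'_w$ and (ii) $L'_w \subset \RRnonneg^2$.

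For (i), I would take $b := \tfrac{s}{tu - sv}(v, u)$, the unique point on the upper extreme ray of $A_\RR$ where $b + e_1$ first returns to $A_\RR$. A direct slope computation shows $b + e_1 \in A_\RR$ and $|e_1| = 1 \le D$. The conditions cutting out $(A_\RR - b) \cap M$ unwind to $tq - sp \ge -s$ and $up - vq \ge 0$ for a translate $(p,q)$; using $s < t$ and $u > v$, one verifies that every nonzero integer solution satisfies $p \ge 1$ and $q \ge 0$, hence $w(p, q) = np + (n+1)q > 0$ once $n \ge 1$. This shows $b$ is $>$-minimal in its $M$-coset in $A_\RR$, so $b \in A_\RR \setminus \Xi$ and $e_1 \in L'_w$. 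The assertion $e_2 \in L'_w$ is proved symmetrically, using the analogous point $\tfrac{t}{tu - sv}(v,u)$-type construction on the lower ray.

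For (ii), I would argue by contradiction. Suppose $c = (c_1, c_2) \in L'_w$ has $c_1 \le -1$. Then $w(c) = nc_1 + (n+1)c_2 > 0$ with $n$ large forces $c_2 \ge 1$. Writing $c = a - b$ with $a, b \in A_\RR$, the cone inequalities $v b_2 \le u b_1$ and $v(b_2 + c_2) \le u(b_1 + c_1)$ combine to give
\[
u b_1 - v b_2 \ge v c_2 - u c_1 \ge u + v.
\]
Hence $u(b_1 - 1) - v b_2 \ge v > 0$, and $t b_2 - s(b_1 - 1) \ge t b_2 - s b_1 \ge 0$ is automatic, so $b - e_1 \in A_\RR$. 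Since $w(e_1) = n > 0$, this forces $b \in \Xi$, contradicting $b \in A_\RR \setminus \Xi$. The case $c_2 \le -1$ (which, together with $w(c) > 0$, forces $c_1 \ge 2$) is handled symmetrically, yielding $b - e_2 \in A_\RR$ via $t b_2 - s b_1 \ge s c_1 - t c_2 \ge 2s + t \ge t$. The main obstacle is precisely this bookkeeping: the crucial observation is that the integrality gap $c_1 \le -1$ (rather than merely $c_1 < 0$) provides exactly the slack $u + v$ needed in $u b_1 - v b_2 \ge u + v$ to afford subtracting a full unit $e_1$ while remaining in the cone, and that the same integrality is what makes the $>$-minimality check in (i) go through.
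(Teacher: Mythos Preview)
Your proposal is correct in outline and follows the same high-level strategy as the paper: invoke Proposition~\ref{prop-stab-coord} to reduce to $L_w = \ZZ_{\ge 0}^2$, then establish (i) $e_1, e_2 \in L'_w$ and (ii) $L'_w \subset \RR_{\ge 0}^2$. The witness you choose for $e_1$ is exactly the paper's point $R_1 = \tfrac{s}{ut-vs}(v,u)$. Where you diverge is in \emph{how} you verify $R_1, R_2 \notin \Xi$. The paper proves a geometric claim (Claim~\ref{claim1}): after a rigid motion, the relevant region becomes a triangle in the picture of the minimal-resolution fan $\minDelta$, and one invokes that adjacent boundary lattice points of $H$ admit no further lattice points in that triangle. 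You instead run a direct inequality argument. Your route does work: from $tq - sp \ge -s$ and $up - vq \ge 0$ with $(p,q)$ a nonzero integer and $p \le -1$, one gets (assuming $v>0$) $q \le -1$; setting $P=-p,\,Q=-q\ge 1$, the second inequality gives $Q \ge (u/v)P > P$, so $Q \ge P+1$, and then $tQ \le s(P+1) \le sQ$ contradicts $t>s$. One caveat: your stated conclusion ``$p \ge 1$'' is too strong when $v = 0$ (then $(0,q)$ with $q \ge 1$ is a solution), but the weaker conclusion $p,q \ge 0$, hence $w(p,q)>0$, is what you actually need and holds in all cases. Your description of the point for $e_2$ is also slightly garbled (it should be $R_2 = \tfrac{v}{ut-vs}(t,s)$ on the \emph{lower} ray $c_1$), but the symmetric argument goes through. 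For (ii), your argument---subtract $e_1$ or $e_2$ from $b$ and remain in $A_\RR$---is again more direct than the paper's, which translates $b$ to the boundary ray $c_2$ and then uses $|E-G-W| > |Q_2|$ to force membership in $\Xi$. What the paper's approach buys is a transparent geometric link to the structure of $\minDelta$; what yours buys is brevity and a proof that makes the role of the bare inequalities $s<t$, $u>v$ explicit.
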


\begin{proof}
First, if  $(t,s)=(1,0)$ and $(v,u)=(0,1)$, 
then $X$ is an affine plane and  the proposition clearly holds.

Next, suppose that either $s>0$ or $v>0$, say $s>0$.
Let $c_1,c_{2} \subset M_\RR$ be the 1-dimensional 
cones spanned by $(t,s)$ and $(v,u)$ respectively.
Put
\begin{gather*}
Q_1 := \left (  \frac{ut}{ut-vs} ,  \frac{us}{ut-vs} \right) \in  c_1 \\
Q_2 := \left ( \frac{vt}{ut-vs} , \frac{ut}{ut-vs} \right) \in  c_2 \\
R_1 := Q_1  - (1,0) = \left(\frac{vs}{ut-vs}, \frac{us}{ut-vs} \right) \in c_2 \\
R_2 := Q_2 - (0,1) =  \left ( \frac{vt}{ut-vs} , \frac{vs}{ut-vs} \right) \in c_1. 
\end{gather*}


\setlength\unitlength{1pt}

\begin{center}
\begin{picture}(230,230)
\put(0,20){\vector(1,0){230}}
\put(20,0){\vector(0,1){230}}
\put(20,20){\line(1,1){150}}
\put(220,10){$a$}
\put(5,220){$b$}
\put(155,175){$a=b$}
\put(193,133){$c_{1}$}
\put(85,205){$c_{2}$}
\put(47,155){$Q_{2}$}
\put(60,36){$R_{2}$}
\put(150,95){$Q_1$}
\put(30,103){$R_1$}

\put(62.8,148.5){\circle*{5}}
\put(148.5,105.7){\circle*{5}}
\put(62.8,48.5){\circle*{5}}
\put(48.5,105.7){\circle*{5}}
\qbezier(148.5,105.7)(98.5,120)(48.5,105.7)
\put(98.5,120){1}
\qbezier(62.8,148.5)(80,98.5)(62.8,48.5)
\put(73,90){1}

\thicklines
\put(20,20){\line(1,3){60}}
\put(20,20){\line(3,2){170}}
\put(62.8,148.5){\line(0,-1){100}}
\put(148.5,105.7){\line(-1,0){100}}

\end{picture}
\end{center}

Let $\Xi$ be as in \S \ref{subsec-stability}.
By definition, $Q_i \in \Xi$, $i=1,2$.


\begin{claim}\label{claim1}
There is no point  $S  \in A_\RR$ except $R_1$
such that  $ w(Q_{1}) \ge w (S)$ and $Q_{1}-S \in M$. Similarly there is no point  $S  \in A_\RR$ except $R_2$  
such that  $ w(Q_{2}) \ge w (S)$ and $Q_{2}-S \in M$.
\end{claim}

\begin{proof}[Proof of the claim]
We give  only a proof of the first assertion.
A proof of the second assertion is similar.
Let $P$ be the intersection of 
$c_2$ and the line through $Q_1$ with the slope $\frac{-n}{n+1}$,
and let $O$ be the origin.

\begin{center}
\begin{picture}(230,230)
\put(0,20){\vector(1,0){230}}
\put(20,0){\vector(0,1){230}}

\put(193,133){$c_{1}$}
\put(85,205){$c_{2}$}

\put(150,95){$Q_1$}
\put(30,103){$R_1$}

\put(148.5,105.7){\circle*{5}}
\put(48.5,105.7){\circle*{5}}
\put(73.5,180.7){\circle*{5}}

\qbezier(48.5,205.7)(110,170)(148.5,105.7)
\put(108.5,165.7){$ \fallingdotseq  \sqrt 2 $}

\put(20,20){\line(1,3){60}}
\put(20,20){\line(3,2){170}}

\put(148.5,105.7){\line(-1,1){100}}
\put(48.5,105.7){\line(0,1){100}}

\put(60,70){$\triangle$}
\put(10,10){$O$}
\put(60,175){$P$}

\thicklines
\put(148.5,105.7){\line(-1,1){75}}
\put(20,20){\line(1,3){53.5}}
\put(20,20){\line(3,2){128.5}}
\put(148.5,105.7){\line(-1,0){100}}

\end{picture}
\end{center}

If  such  $S$ existed, since $w(Q_1) \ge w(S)$, $S$ would be in the triangle $OPQ_1$.
Since $Q_1-S$ is none of $(\pm 1, 0)$, $(0,\pm 1)$ and $(\pm 1,\pm 1)$, we have
 $|Q_1 -S| \ge 2 $. 
 But since $|Q_1 -R_1|< 2 $
and $|P- Q_1|< 2 $,
$S$ is not in the triangle $PR_1Q_1$.
(Concerning the second assertion of the claim,  if we define $P' \in c_1$ to be the intersection of $c_1$ and the line through $Q_2$ with the slope $\frac{-n}{n+1}$, then 
since $s >0$, $|P'- Q_2| < \sqrt 2 $. But $Q_{2}-S$ is none of $(\pm 1, 0)$ and  $(0,\pm 1)$. Thus $S$ is not in the triangle $P'R_2Q_2$.)

Then we will show that $S$ is not in the triangle $\triangle:= OR_1 Q_1$ either.
First translate $\triangle$ so that $Q_1$ maps to the origin.
Then rotate clockwise it $90^\circ $ around the origin
and get a new triangle, denoted $\bar \triangle$.
This fits into the fan $\minDelta$ as follows:

\begin{center}
\begin{picture}(260,220)
\put(0,50){\vector(1,0){260}}
\put(120,0){\vector(0,1){220}}
\put(250,40){$a$}
\put(110,210){$b$}

\put(150,50){\circle*{5}}
\put(210,20){\circle*{5}}
\put(210,25){$(u,-v)$}
\put(150,50){\line(2,-1){60}}
\put(120,50){\line(3,-1){120}}
\put(120,50){\line(-1,2){60}}
\put(120,80){\circle*{5}}
\put(90,110){\circle*{5}}
\put(60,140){\circle*{5}}
\put(20,140){$(-s,t)$}
\put(150,50){\line(-1,1){90}}
\put(120,50){\line(-2,3){90}}

\put(120,80){\line(-3,1){100}}
\put(120,80){\line(3,-1){50}}
\put(-40,120){$b= -(u/v)a +1$}

\put(125,85){$(0,1)=:C$}
\put(150,55){$(1,0)$}

\put(105,70){$\bar \triangle$}

\put(94.3,88.6){\circle*{5}}
\put(85,75){$T$}
\put(110,40){$O$}

\thicklines

\put(120,50){\line(0,1){30}}
\put(120,50){\line(-2,3){27}}
\put(120,80){\line(-3,1){27}}
\end{picture}
\end{center}

If we denote by $T$ the intersection of  the lines
$\{ b = - (t/s)a\}$ and $\{b= -(v/u)a +1 \}$
and put $C := (0,1)$,
then $\bar \triangle$ is the triangle $OTC$. By the description of $\minDelta$, 
there is no lattice point in $\bar \triangle$ except $O$ and $C$.
Therefore $S$ is not in $\triangle$.
This completes the proof of the claim.
\end{proof}

We next claim that $R_2 \notin \Xi$.
If this was not the case, there would exist $V \in A_\RR$ such that $w (R_2) \ge w (V) $ and 
$R_2 -V \in M$.  Since $Q_{1}-R_{2} \in A_{\RR}$, $V+Q_1 -R_2 \in A_\RR$. 
Moreover we have
\begin{gather*}
w ( Q_1)\ge w(V+Q_1 -R_2 ),\\
V+Q_{1}-R_{2} \notin c_{2}, \ (\text{in particular, } V+Q_{1}-R_{2} \ne R_{1}) \text{ and} \\
Q_1-(V+Q_1 -R_2 )= R_2-V \in M.
\end{gather*}
From Claim  \ref{claim1}, the point $V+Q_1 -R_2$ does not exist,
a contradiction.

Let $L'_{w}$ and $ L_{w}$ be as in \S \ref{subsec-stability}.
We have $(0,1) = Q_2 -R_2 \in L _{w}$. Similarly $(1,0) \in L _{w}$.
From Proposition \ref{prop-stab-coord}, 
 $k[x_1,x_2] \subset k[U_\sigma] =k[L_{w}]$. 
 Now to show  $k[U_\sigma] = k[x_1,x_2]$, it suffices to show that for every
 $c,d\in \ZZpos$, we have $(c,-d),(-d,c) \notin L'_{w}$.
On the contrary, suppose that either $(c,-d) \in L'_{w}$ or $(-d,c) \in L'_{w}$, say $W:=(c,-d) \in L'_{w}$.
Then by definition, there exists $E \in A_{\RR}$ with $E-W \in A_\RR \setminus \Xi$.
It is easy to see that there exists $G \in A_\RR$ such that $E-G \in c_1$ and $E-W-G\in c_2$. As in the following figure, $|E-G-W| > |Q_2| $.

{\unitlength=0.2mm
\begin{center}
\begin{picture}(300,350)
\put(0,40){\vector(1,0){300}}
\put(20,0){\vector(0,1){350}}

\put(20,40){\line(1,3){90}}
\put(20,40){\line(3,2){260}}

\put(290,220){$c_{1}$}
\put(105,320){$c_{2}$}

\put(41.4,104.3){\circle*{5}} \put(45,104.3){$Q_2$} 
\put(41.4,104.3){\line(0,-1){100}} 
\put(41.4,4.3){\line(1,0){150}} 
\put(41.4,54.3){\circle*{5}}\put(45,50){$R_2$}

\qbezier(41.4,104.3)(50,79.3)(41.4,54.3)
\put(47,75.3){$1$}

\qbezier(41.4,104.3)(10,54.3)(41.4,4.3)
\put(5,54.3){$d$}

\qbezier(41.4,4.3)(116.4,20)(191.4,4.3)
\put(100,20){$c$}

\put(190,250){$W$}
\put(105,60){$W$}
\put(0,20){$O$}

\put(250,180){$E-G$}
\put(257,198){\circle*{5}}
\put(106,298){\circle*{5}} \put(110,298){$E-G-W$} 

\thicklines
\put(41.4,104.3){\vector(3,-2){150}} 

\put(106,298){\vector(3,-2){150}} 

\end{picture}
\end{center}}
Therefore  $E-W-G \in \Xi$ 
and hence $E-W \in \Xi$. This is a contradiction.
We have completed the proof.
\end{proof}

The following is a direct consequence of the above proposition.

\begin{thm}\label{thm-minimal}
Let $X$ be a normal two-dimensional toric variety.
Then for $l \gg 0$, $\FB_{(l)} (X)$ is the minimal resolution of $X$.
\end{thm}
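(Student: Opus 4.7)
The plan is to deduce the theorem directly from Proposition~\ref{prop-main}, which does all the local work. Let $\tau_1, \dots, \tau_r$ be the finitely many two-dimensional cones of $\minDelta$. For each $i$, Proposition~\ref{prop-main} (applied after identifying $N = \ZZ^2$ so that $\tau_i$ becomes the first quadrant, with $w = (n, n+1) \in \relint \tau_i$ for $n \gg 0$) produces a constant $l_0(\tau_i) \in \ZZpos$ such that for every $l \ge l_0(\tau_i)$ the unique two-dimensional cone $\sigma \in \Delta_{A,l}$ whose relative interior contains $w$ satisfies $\sigma = \tau_i$, together with $k[U_\sigma] = k[x_1, x_2]$. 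Setting $l_0 := \max_i l_0(\tau_i)$, each $\tau_i$ is a cone of $\Delta_{A,l}$ for all $l \ge l_0$.

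Next I would argue that $\Delta_{A,l} = \minDelta$ for $l \ge l_0$ by a purely fan-theoretic observation. Both fans have support $A_\RR^\vee$, and by the previous step each two-dimensional cone $\tau_i$ of $\minDelta$ is a cone of $\Delta_{A,l}$. Since the $\tau_i$ already tile $A_\RR^\vee$, and two distinct maximal cones of a single fan cannot share an interior point, every two-dimensional cone of $\Delta_{A,l}$ must coincide with some $\tau_i$. Hence the two fans have the same set of maximal cones, and therefore the same set of cones.

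To finish, I would translate this equality of fans into a geometric isomorphism. For each $i$, the affine chart $U_{\tau_i} \subset \FB_{(l)}(X)$ has coordinate ring $k[x_1, x_2] = k[\tau_i^\vee \cap M]$, so it is canonically isomorphic to the corresponding smooth chart of the minimal resolution. These charts are glued through the common open torus $T \subset X$ in both varieties, compatibly with the morphism to $X$, so $\FB_{(l)}(X)$ is isomorphic to the toric variety with fan $\minDelta$, namely the minimal resolution.

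The main obstacle has already been surmounted in Proposition~\ref{prop-main}; the present argument is essentially bookkeeping: finiteness of $\minDelta$ plus the tautology that two fans with the same support and the same maximal cones coincide. One subtle point worth noting is that $\FB_{(l)}(X)$ is a priori only a (possibly non-normal) toric variety, so normality must be verified; but this is immediate from each $U_{\tau_i}$ being an affine plane.
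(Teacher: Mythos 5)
Your proposal is correct and is exactly the intended deduction: the paper records no separate argument for Theorem~\ref{thm-minimal} beyond the remark that it is ``a direct consequence'' of Proposition~\ref{prop-main}, and your bookkeeping (uniform $l_0$ over the finitely many maximal cones of $\minDelta$, the fan-theoretic identification $\Delta_{A,l}=\minDelta$, and the chart-by-chart identification $k[U_{\tau_i}]=k[x_1,x_2]$) is precisely what that phrase suppresses.
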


\begin{rem}
 In Section \ref{sec-G}, comparing the $G$-Hilbert scheme and the F-blowup,
 we will prove a similar result, which is however
 valid only for tame quotient singularities.
\end{rem}

\begin{rem}\label{rem-minimal}
A slight change in the arguments gives more. Suppose that
 $X$ is a two-dimensional (not necessarily normal) toric variety and has 
 an isolated singularity, and that the normalization of $X$ is NOT smooth. 
 Then for $l \gg 0$, $\FB_{(l)} (X)$ is the minimal resolution.
 Compare this with Proposition \ref{prop-isolated}.
\end{rem}

\subsection{An isolated singularity whose normalization is smooth}

\begin{prop}\label{prop-isolated}
Suppose that $X= \Spec k[A]$ has an  isolated singularity 
and the normalization $\tilde X$ of $X$ is smooth, so $\tilde X \cong \AA^d$.
 Then for $l \gg 0$,
$\FB_{(l)}( X)$ is the blowup of $\tilde X$ at the origin.
\end{prop}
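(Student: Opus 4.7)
The plan is to compute each affine toric chart of $\FB_{(l)}(X)$ directly via Proposition \ref{prop-coord-ring} and identify it with a standard chart of the blowup. Identify $\bar A = \NN^{d}$ so that $\tilde X = \Spec k[x_{1},\dots,x_{d}]$; the isolated-singularity hypothesis is equivalent to saying that $B := \NN^{d}\setminus A$ is finite (the quotient $k[\bar A]/k[A]$ is supported only at the origin and hence finite-dimensional). Let $D$ be a bound on every coordinate of every point of $B$ and fix $l > D$. Recall that $\mathrm{Bl}_{0}\tilde X$ has fan with maximal cones
\[
\tau_{i} = \mathrm{cone}(e_{1},\dots,\widehat{e_{i}},\dots,e_{d},(1,\dots,1)), \qquad i=1,\dots,d,
\]
whose corresponding affine chart has coordinate ring $k[x_{i},x_{j}/x_{i}\,(j\ne i)]$.

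For each $i$, I would choose a generic weight $w \in \relint\tau_{i}\cap N$ with $0 < w_{i} < w_{j}$ for every $j\ne i$, and let $\sigma\in\Delta_{A,l}$ be the $d$-dimensional cone containing $w$. The heart of the proof is to describe $(1/l)A\setminus B_{\sigma}$, the set of $w$-minimal representatives of the cosets of $M$ in $(1/l)A$. Using $l > D$, one checks that
\[
(1/l)A\setminus B_{\sigma} \;=\; \bigl(\{0,\tfrac{1}{l},\dots,\tfrac{l-1}{l}\}^{d}\cap (1/l)A\bigr)\ \cup\ \{b'/l + e_{i}:b'\in B\}.
\]
For a coset whose $[0,1)^{d}$-representative $a$ already lies in $(1/l)A$, the point $a$ is its own $w$-minimum, because every other element of $(a+M)\cap(1/l)A$ has the form $a+m$ with $m\in\NN^{d}\setminus\{0\}$, so $w\cdot m>0$. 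For a coset with representative $a=b'/l$, $b'\in B$, the point $a$ itself is excluded (as $b'\notin A$); the condition $l > D$ forces the admissible translates $a+m\in(1/l)A$ to be exactly those with $m\in\NN^{d}\setminus\{0\}$, and by the genericity of $w$ the $w$-minimum is $a+e_{i}$.

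Plugging into Proposition \ref{prop-coord-ring}, $k[U_{\sigma}]$ is generated by the monomials $x^{a-b}$ for $b\in (1/l)A\setminus B_{\sigma}$ and $a\in (1/l)A$ with $a-b\in M$. The first family of $b$'s yields $x^{m}$ for $m\in\NN^{d}$ (hence the coordinates $x_{1},\dots,x_{d}$); the second family ($b=b'/l+e_{i}$) yields $x^{m-e_{i}}$ for $m\in\NN^{d}\setminus\{0\}$, in particular $x^{e_{j}-e_{i}}=x_{j}/x_{i}$ for $j\ne i$, and nothing falling outside $k[x_{i},x_{j}/x_{i}\,(j\ne i)]$. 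Hence $k[U_{\sigma}] = k[x_{i},x_{j}/x_{i}\,(j\ne i)]$, so $\sigma = \tau_{i}$ as cones in $\Delta_{A,l}$. Since the $\tau_{i}$'s cover $|\Delta_{A,l}| = A_{\RR}^{\vee} = \RRnonneg^{d}$, a standard fan-combinatorics argument (any other maximal cone of $\Delta_{A,l}$ would be contained in, and therefore equal to, some $\tau_{i}$) forces $\Delta_{A,l}$ to be the blowup fan, so $\FB_{(l)}(X)\cong \mathrm{Bl}_{0}\tilde X$.

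The most delicate point is the $w$-minimum enumeration: one must verify that the finite non-normal locus $B$ perturbs the set of minima by \emph{exactly} the $|B|$ points $b'/l + e_{i}$, which is precisely what is needed to produce the rational monomials $x_{j}/x_{i}$ for the blowup chart, neither too many nor too few. (A cardinality check, $|A\cap\{0,\dots,l-1\}^{d}|+|B|=l^{d}$, is a useful consistency test.) The parenthetical non-toric positive-characteristic statement can be handled by passing to the completion at the singular point via Proposition \ref{prop-completion}: for $q=p^{e}\gg 0$ the Frobenius power ideal $\fm^{[q]}$ lies in the conductor (which is $\fm$-primary by the isolated-singularity hypothesis), which lets one compare the Frobenius of $X$ with that of $\tilde X$ in the same spirit as the above monomial counting.
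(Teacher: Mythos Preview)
Your proof is correct and follows essentially the same approach as the paper's own argument. Both proofs identify the standard monomials of $\initial_{w}\fa_{l}$ (for $w$ with $w_{i}$ the unique minimum) as the disjoint union of $(1/l)A\cap[0,1)^{d}$ and the translates $(1/l)B+e_{i}$, and then invoke Proposition~\ref{prop-coord-ring} to read off $k[U_{\sigma}]=k[x_{i},x_{j}/x_{i}\,(j\ne i)]$. The only cosmetic difference is that you organize the computation as ``find the $w$-minimal representative in each $M$-coset of $(1/l)A$,'' while the paper builds up the initial ideal by exhibiting the binomials $x_{i}-1$ and $x^{e_{j}+b/l}-x^{e_{i}+b/l}$ explicitly and then closes with the colength count $(l^{d}-|B|)+|B|=l^{d}$ that you mention as a consistency check; the two viewpoints are equivalent. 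Your concluding fan argument (the $\tau_{i}$ cover $A_{\RR}^{\vee}$, and distinct $\tau_{i}$ give distinct initial ideals since $B\ne\emptyset$) is a bit more explicit than the paper's, which simply asserts the conclusion at that point.
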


\begin{proof}
We may suppose that $A \subset \ZZnonneg^d $ 
and that $A^c := \ZZnonneg ^d\setminus A$ is finite.
Let 
$e_i := (0, \dots , \stackrel{i}{\breve{1}},\dots,0 )$ be the $i$-th generator of $\ZZnonneg^d$.
For $l \gg 0 $, we have
$(1/l) \cdot A^{c} \subset [0,1]^{d} \subset \RR^{d}$.
Then $e_i \in (1/l) \cdot A$ and $x_i -1 \in \fa_{l}$.
Therefore for any weight vector $w \in \relint A_\RR^\vee \cap N = \ZZpos^d$, 
we have $x_i \in \initial _w \fa_{l} $.
Let $\fd \subset \initial _w \fa_{l}$ be the ideal of  $k[(1/l)\cdot A]$ generated by $x_i$, $i=1,\dots,d$.
The corresponding subset of $(1/l)\cdot A$ is
\[
(1/l)\cdot A \setminus \left ( \{ 0/l,1/l,\dots,(l-1)/l\}^d \cup \bigcup _{i=1} ^d(1/l)\cdot ( A^c)+  e_i \right) .
\]
Suppose that for some $i_0$,  $w(e_{i_0}) < w( e_i)$, $1 \le i \le d$, $i \ne i_0$.
Then for $b \in A^c $ and $1 \le i \le d$, $i \ne i_0$, we have $x^{e_i +b/l}-x^{e_{i_0} +b/l} \in \fa_{l}$.
So $x^{e_i + b/l} \in \initial _w \fa_{l}$.
Hence if we define a semigroup $H$ by 
\[
H:= (1/l) \cdot A \setminus  (\{0/l,1/l,\dots,(l-1)/l\}^d \cup  (1/l)\cdot (A^c) + e_{i_0}) ,
\]
then we have the inclusion of ideals $k[H] \subset \initial _w \fa_{l}$.
But since $k[(1/l)\cdot A]/k[H]$ has length $l^d$, we have $\initial _w \fa_{l} = k[H]$. 
Now it is easy to see that $w \in \relint \sigma$ for some $d$-dimensional cone $\sigma \in \Delta_{A,l}$ and  
$k[U_\sigma] = k[ x ^{e_{i_0}} , x^{e_i -e_{i_0}} \mid 1\le i \le d, \, i\ne i_0 ]$.
This proves the proposition.
\end{proof}

\subsection{The Whitney umbrella}\label{subsec-Whitney}

Put 
\[
\ZZnonneg^{2} \supseteq A := \{ (0,n) \mid n \in 2 \ZZnonneg \}  \cup 
\{ (l,m) \mid l \in \ZZpos , \ m \in \ZZnonneg \}.
\] 
Then $A$ is a submonoid. The toric singularity
$X := \Spec k[A]$ is known as the Whitney umbrella.
It is defined by the equation $x^2z-y^2=0$ as a subvariety of $\AA^3_k$.
It is not an isolated singularity, but the normalization $\tilde X$ of $X$
is an affine plane.

\begin{prop}
\begin{enumerate}
\item For even $l \ge 2$,  $\FB_{(l)} (X) \cong \AA^2_k$.
\item For odd $l \ge 3$, $\FB_{(l)} (X) $ 
is the blowup of $\AA^2_k$ at the origin.
\end{enumerate}
\end{prop}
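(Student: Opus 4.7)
The plan is to apply Proposition~\ref{prop-coord-ring} after identifying the Gr\"obner fan $\Delta_{A,l}$ in each case. The monoid $(1/l)\cdot A$ consists of $(0, 2k/l)$ for $k \in \ZZnonneg$ on the $y$-axis together with every lattice point $(j/l, m/l) \in (1/l)\cdot M$ with $j \ge 1$, $m \ge 0$. First I would partition $(1/l)\cdot A$ by fractional class modulo $\ZZ^2$; these classes are indexed by $(r/l, s/l)$ with $0 \le r, s < l$, and for each class I would locate its $w$-minimum for a generic $w \in \relint A_\RR^\vee \cap N = \ZZpos^2$. A class with $r \ge 1$ has unique minimum $(r/l, s/l)$ independent of $w$; for $r = 0$, a $y$-axis representative exists if and only if $2k \equiv s \pmod{l}$ has a solution, and this parity dichotomy is what separates the two assertions.

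When $l$ is even, $2k \equiv s \pmod{l}$ is solvable iff $s$ is even. For $s$ even the class minimum is $(0, s/l)$ (the off-$y$-axis alternative $(1, s/l)$ being strictly $w$-larger), while for $s$ odd there is no $y$-axis option and the minimum is the unique off-axis element $(1, s/l)$. These choices do not depend on the generic $w$, so $\Delta_{A,l}$ has a single maximal cone $\sigma = A_\RR^\vee$, with $\sigma^\vee = \RRnonneg^2$. By Proposition~\ref{prop-coord-ring} each $a - b$ in the generating set for $k[U_\sigma]$ lies in $\sigma^\vee \cap \ZZ^2 = \ZZnonneg^2$, so $k[U_\sigma] \subseteq k[x_1, x_2]$. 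Taking $b = (0, 0)$ with $a = (1, 0)$ and $a = (0, 1)$ realises both generators of $k[x_1, x_2]$; crucially $(0, 1)$ belongs to $(1/l)\cdot A$ precisely because $l$ is even. Therefore $k[U_\sigma] = k[x_1, x_2]$ and $\FB_{(l)}(X) \cong \AA^2_k$.

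When $l$ is odd the congruence is solvable for every $s$, and the smallest $y$-axis representative of the class with $r = 0$ and $s$ odd is $(0, (l+s)/l)$, since $(l+s)/2$ is a non-negative integer for $l, s$ both odd. Its $w$-value differs from that of the off-axis candidate $(1, s/l)$ by exactly $w_2 - w_1$, so a single bifurcation occurs at the wall $\{w_1 = w_2\}$. Thus $\Delta_{A,l}$ has the two maximal cones $\sigma_1$, generated by $(0, 1)$ and $(1, 1)$ (where $w_2 \ge w_1$), and $\sigma_2$, generated by $(1, 0)$ and $(1, 1)$ (where $w_1 \ge w_2$), which is exactly the fan of the blowup of $\AA^2_k$ at the origin. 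For $\sigma_1$ the pair $b = (1, 1/l)$, $a = (0, (l+1)/l)$ gives $a - b = (-1, 1)$, so $x_2/x_1 \in k[U_{\sigma_1}]$; together with $x_1$ coming from $b = (0, 0)$, $a = (1, 0)$ this yields $k[U_{\sigma_1}] \supseteq k[x_1, x_2/x_1]$. The reverse inclusion is automatic since every $a - b$ lies in $\sigma_1^\vee \cap \ZZ^2$, which is generated as a submonoid of $\ZZ^2$ by $(1, 0)$ and $(-1, 1)$. The chart $U_{\sigma_2}$ is handled analogously, using $b = (0, (l+1)/l)$, $a = (1, 1/l)$ to produce $x_1/x_2$ and $b = (1/l, 0)$, $a = (1/l, 1)$ to produce $x_2$, and the two charts glue along their common face to yield the blowup of $\AA^2_k$ at the origin.

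The main point requiring care is the exhaustive identification of $w$-minima over fractional classes, in particular verifying that no further bifurcations arise and that no element $a - b$ escapes the claimed dual cone; once that bookkeeping is done, both assertions reduce to a direct computation of $k[U_\sigma]$ via Proposition~\ref{prop-coord-ring}.
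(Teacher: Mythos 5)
Your proof is correct and follows essentially the same route as the paper: identify the initial ideals (equivalently, the standard monomials) of $\fa_{l}$ for generic $w$, observe that the Gr\"obner fan is trivial for even $l$ and has exactly the two maximal cones of $\mathrm{Bl}_{0}\AA^{2}$ for odd $l$, and then read off the charts from Proposition~\ref{prop-coord-ring}. Your bookkeeping via $w$-minima of residue classes modulo $M$ (and the containment $k[U_{\sigma}]\subseteq k[\sigma^{\vee}\cap M]$ for the upper bound) is a slightly more systematic packaging of the paper's explicit list of binomials plus the colength-$l^{2}$ count, but the substance is the same and all the key verifications (e.g.\ that the only $w$-dependent classes are $(0,s/l)$ with $s$ odd, bifurcating exactly at $w_{1}=w_{2}$) check out.
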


\begin{proof}
\begin{enumerate}
\item We have $x_1-1,x_2 -1  \in \fa_{l}$.
From this, $x_1,x_2 \in \initial_w \fa$ for any weight vector
$w \in \ZZpos^2$. Since the ideal $\langle x_1  ,x_2\rangle \subset k[(1/l) \cdot A]$ has colength $l^2$,
we have $\initial _w \fa  = \langle x_1  ,x_2 \rangle$.
Therefore the fan $\Delta_{A,l}$ consists of $A_\RR^\vee$
and all its faces.
From Proposition \ref{prop-coord-ring}, 
 $\FB_{(l)} ( X) = \Spec k[x_1,x_2 ] = \AA^2 _k$.

\item  We first observe
\[
x_1-1, x_2^{2}-1, x_1^{1/l}x_2-x_1^{1/l}, x_1^{1/l}x_2^{(l+1)/l}-x_1^{1/l}x_2^{1/l},   x_1 x_2^{a/l} -x_2^{(l+a)/l}  \in \fa_{l},
\]
 where $a$ runs over the positive odd numbers. 
If $x _1 >_w x_2$,
then 
\[
\initial _w \fa = \langle x_1, x_2^{2},  x_1^{1/l}x_2, x_1^{1/l}x_2^{(l+1)/l},  x_1 x_2^{a/l} \mid a \text{ runs over positive odd numbers}  \rangle.
\]
Hence for $\sigma \in \Delta _{A,l}$ with $w \in \relint \sigma$, we have
 $k[U_\sigma] = k[x_2 ,x_1x_2^{-1}]$.
If $x_1 <_wx_2$, then 
\[
\initial _w \fa = \langle x_1, x_2^{2},  x_1^{1/l}x_2, x_1^{1/l}x_2^{(l+1)/l},  x_2^{1+a/l} \mid a \text{ runs over positive odd numbers}  \rangle,
\]
and $k[U_\sigma] = k[x_1 ,x_1^{-1}x_2]$.
We have proved the assertion.
\end{enumerate}
\end{proof}

Thus for a non-normal toric variety $X$, the sequence  $\FB_{(1)} (X)$, $\FB_{(2)} (X)$, \dots, does not generally stabilize. 
However at least for the Whitney umbrella, 
if $k$ is a perfect field of  characteristic $p>0$, then the sequence  $\FB_{1} (X)$, $\FB_{2} (X)$, \dots, stabilizes (to different toric varieties depending on whether $p$ is even or not.).

\section{$G$-Hilbert scheme vs.\ F-blowup}\label{sec-G}

In this section,  we suppose that $k$
is an algebraically closed field of characteristic $p >0$.

\subsection{Global quotients}

Let $M $ be a smooth variety of dimension $d$ and $G$ a finite group with $p \nmid   |G|$.
Suppose that $G$ acts on $M$ effectively. The $G$-Hilbert scheme, denoted
$\GHilb (M)$, is the closure of the set of free orbits in $\Hilb_{|G|} (M)$.

\begin{lem}
There exists a natural immersion 
\begin{align*} 
\GHilb (M) & \hookrightarrow \Hilb_{|G| \cdot q^{d}} (M_{e}) \\
Z  &\mapsto (F^{e}_{M})^{-1}(Z) .
\end{align*}
\end{lem}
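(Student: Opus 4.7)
The plan is to exhibit the map as a morphism via the functorial characterization of the Hilbert scheme, then verify that it is a monomorphism and is unramified; a standard criterion then upgrades this to a locally closed immersion. The essential input is that, on the smooth variety $M$ of characteristic $p$, the $e$-th Frobenius $F^{e}_{M}: M_{e} \to M$ is finite, flat and faithfully flat by Kunz's criterion (Proposition \ref{prop-Kunz}); being finite flat over a smooth (hence regular) base, $F^{e}_{M}$ is in fact locally free of rank $q^{d}$.

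For the construction, given a base scheme $T$ and a $T$-flat family $\cZ \hookrightarrow T \times M$ of $G$-clusters of length $|G|$, the pullback $\cZ' := (\mathrm{id}_{T} \times F^{e}_{M})^{-1}(\cZ) \subset T \times M_{e}$ is $T$-flat, since $\cZ' \to \cZ$ is a base change of $F^{e}_{M}$ (hence flat) and $\cZ \to T$ is flat. Its fibers over $T$ have length $|G|\cdot q^{d}$ because $F^{e}_{M}$ is locally free of rank $q^{d}$. By the universal property of $\Hilb_{|G|q^{d}}(M_{e})$ this yields the desired morphism $\GHilb(M) \to \Hilb_{|G|q^{d}}(M_{e})$.

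To check it is a monomorphism, let $\cZ_{1},\cZ_{2} \subset T \times M$ be two such families with equal pullbacks to $T \times M_{e}$. Their ideal sheaves satisfy $I_{\cZ_{1}}\cdot \cO_{T \times M_{e}} = I_{\cZ_{2}}\cdot \cO_{T \times M_{e}}$; the map $\mathrm{id}_{T} \times F^{e}_{M}$ is faithfully flat (base change of a faithfully flat map), so intersecting with $\cO_{T \times M}$ recovers $I_{\cZ_{i}}$, giving $\cZ_{1}=\cZ_{2}$. For unramifiedness, at a closed point $[Z] \in \GHilb(M)$ the source tangent space lies in $\Hom_{\cO_{M}}(I_{Z},\cO_{Z})$ and the target tangent space is, by Hom–tensor adjunction together with flatness of $F^{e}_{M}$, canonically $\Hom_{\cO_{M}}(I_{Z},\cO_{Z} \otimes_{\cO_{M}} \cO_{M_{e}})$; the tangent map is induced by $\cO_{Z} \hookrightarrow \cO_{Z} \otimes_{\cO_{M}} \cO_{M_{e}}$, and faithful flatness of $F^{e}_{M}$ (via the identity $I_{Z}\cO_{M_{e}} \cap \cO_{M} = I_{Z}$) makes this inclusion, and hence the tangent map, injective.

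Combining these two properties, the morphism is an unramified monomorphism between schemes locally of finite type over $k$, hence a locally closed immersion by the standard criterion (cf.\ EGA IV, 17.2.6). The main technical point to watch is that the monomorphism property must be established over \emph{arbitrary} bases $T$, not merely at closed points; this is precisely what the faithful-flatness-and-intersection argument handles, so the proof is clean once one commits to working relatively.
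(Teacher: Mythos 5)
Your argument is correct and ultimately rests on the same two inputs as the paper's proof: flatness of $F^{e}_{M}$ for smooth $M$ (Kunz), which makes the Frobenius pull-back of the universal family of $G$-clusters flat of constant length $|G|\cdot q^{d}$ and hence produces the morphism to $\Hilb_{|G|\cdot q^{d}}(M_{e})$, and the identity $I_{Z}\cO_{M_{e}}\cap\cO_{M}=I_{Z}$ coming from (faithful) flatness, which is exactly the paper's key step ($I_{Z}R^{1/q}\cap R=I_{Z}$, so $g_{0}\notin I_{Z}R^{1/q}$). The packaging is genuinely different, though: the paper works pointwise over an algebraically closed field, dismisses set-theoretic injectivity as clear, and checks injectivity of tangent maps by exhibiting an explicit element $f_{0}+g_{0}t$ in the ideal of the first-order deformation, whereas you argue relatively over an arbitrary test scheme $T$, showing the map of Hilbert functors is injective on all $T$-points (hence a monomorphism of schemes) and recovering the tangent-space statement via Hom--tensor adjunction. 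Your monomorphism claim over arbitrary bases is strictly stronger than the paper's pointwise injectivity and in fact already implies unramifiedness (a monomorphism locally of finite type has vanishing $\Omega$), so your separate unramifiedness check is redundant but harmless. One caveat applies to both proofs equally: the final criterion ``unramified monomorphism locally of finite type $\Rightarrow$ locally closed immersion'' is not literally true without additional hypotheses (consider $(\AA^{1}\setminus\{0\})\sqcup\{0\}\to\AA^{1}$), so your last step needs the same supplementary care as the paper's ``injective with injective tangent maps'' criterion, which is invoked identically here and in the paper's first lemma; relative to the paper this is therefore not a gap.
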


\begin{proof}
For simplicity, we give the proof only for the case where $M$ is affine, say $M=\Spec R$.
The proof for the general case is similar.
Let $\cZ \subset  M\times \GHilb(M) $ be the universal family of $G$-clusters
and $\cZ_{e} \subset   M_{e}\times \GHilb(M)$ its pull-back
by the morphism $  M_{e}\times\GHilb(M) \to  M\times\GHilb(M) $.
Since the last morphism is flat, $\cZ_{e}$ is a flat family over $\GHilb(M)$.
We have the corresponding morphism 
$\iota:\GHilb (M)  \to \Hilb_{|G| \cdot q^{d}} (M_{e}) $, which is clearly injective.
Consider a nonzero tangent vector
\[
 \epsilon : \Spec k[t]/\langle t^{2} \rangle \to \GHilb (M),
\]
say at a $G$-cluster $Z \in \GHilb (M)$. Let $I_{Z} \subset R$ be the defining
ideal of $Z$. Then $\epsilon $ corresponds to a map
\[
 \epsilon' :I_{Z} \to R/I_{Z}.
\]
Let $\cZ_{\epsilon} \subset  M\times \Spec k[t]/\langle t^{2} \rangle $
be the pull-back of $\cZ$ by $\epsilon$ and 
$J \subset R[t]/\langle t^{2} \rangle$ its defining ideal.
If $f+g t \in J$, $f\in I_{Z} $, $g \in R$, then 
$\epsilon'(f) $ is $g$ modulo $I_{Z}$. Since $\epsilon'$ is a nonzero map,
there exist $f_{0}\in I_{Z}$ and $g_{0} \in R \setminus I_{Z}$
with $f_{0} +g_{0} t \in J$.

Let $\cZ_{e,\epsilon} \subset  M_{e} \times\Spec k[t]/\langle t^{2} \rangle $ be the pull-back of $\cZ_{e}$ by $\epsilon$. This corresponds to the image  $\bar \epsilon$ of the tangent vector $\epsilon$ on $\Hilb_{|G| \cdot q^{d}} (M_{e}) $.
The defining ideal of $\cZ_{e,\epsilon}$  is $J \cdot  R^{1/q}[t]/\langle t^{2} \rangle$. 
The $\bar \epsilon$ corresponds to a map 
\[
I_{Z}R^{1/q} \to R^{1/q} / I_{Z}R^{1/q} .
\]
Since $f_{0} +g_{0} t \in J \cdot  R^{1/q}[t]/\langle t^{2} \rangle$,
the map sends $f_{0}$ to $ g_{0} $.  Since $R^{1/q}$ is flat over $R$,
$I_{Z}R^{1/q} \cap R  = I_{Z}$. Hence $g_{0 } \notin I_{Z}R^{1/q} $.
It follows that $\bar \epsilon$ is a nonzero tangent vector. 
This shows that $\iota $ is an immersion. 
\end{proof}

\begin{prop}\label{prop-GtoF}
Put $X:=M/G$.
For every $e$, the natural morphism $\GHilb (M) \to X$ factors
as 
\[
 \GHilb ( M) \to \FB_{e} (X) \to X
\]
and the map 
\[ 
\GHilb (M) \to \FB_{e} (X)
\]
sends a $G$-cluster $Z \subset M$ to the cluster $(F ^{e})^{-1}(Z) /G \subset X$.
(In general, the quotient variety $X$ is not a scheme but only an algebraic space.
But it will not cause any problem.)
\end{prop}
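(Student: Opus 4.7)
The plan is to exhibit a flat family of length-$q^{d}$ clusters in $X_{e}$ parametrized by $\GHilb(M)$ whose fiber at a $G$-cluster $Z$ is the $G$-quotient of $(F^{e}_{M})^{-1}(Z)$, and to show that on the open locus of free orbits this family agrees with the tautological family defining $\FB_{e}(X)\subset \Hilb_{q^{d}}(X_{e})$. The classifying morphism will then take values in the closed subscheme $\FB_{e}(X)$ because $\GHilb(M)$ is by definition the closure of the free-orbit locus, and commutativity with the projection to $X$ will follow immediately from inspecting set-theoretic supports.

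The starting observation is the identification $X_{e}=M_{e}/G$, which comes from $\cO_{X_{e}}=(\cO_{M}^{G})^{1/q}=(\cO_{M}^{1/q})^{G}=\cO_{M_{e}}^{G}$ (using uniqueness of $q$-th roots in characteristic $p$). For any $G$-invariant closed subscheme $Y\subset M_{e}$, the scheme-theoretic image in $X_{e}$ is then $\Spec(\cO_{Y})^{G}$, by exactness of $(-)^{G}$ in tame characteristic applied to $0\to I_{Y}\to \cO_{M_{e}}\to \cO_{Y}\to 0$. I apply this to the pull-back of the universal $G$-cluster: letting $\cZ\subset \GHilb(M)\times M$ be the universal family, set $\cZ_{e}\subset \GHilb(M)\times M_{e}$ to be the pull-back of $\cZ$ along $1\times F^{e}_{M}$. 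Since $M$ is smooth, $F^{e}_{M}$ is finite flat of degree $q^{d}$ by Kunz's theorem, so $\cZ_{e}$ is flat over $\GHilb(M)$ with fibers of length $|G|\cdot q^{d}$, carrying a fiberwise $G$-action. Let $\pi$ be the projection to $\GHilb(M)$ and set $\mathcal{F}:=(\pi_{*}\cO_{\cZ_{e}})^{G}$.

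The key point is that $\mathcal{F}$ is locally free of rank $q^{d}$. Because $p\nmid |G|$, the Reynolds operator $e_{G}=|G|^{-1}\sum_{g\in G}g$ is an $\cO_{\GHilb(M)}$-linear projector exhibiting $\mathcal{F}$ as a direct summand of the locally free sheaf $\pi_{*}\cO_{\cZ_{e}}$ of rank $|G|\cdot q^{d}$; hence $\mathcal{F}$ is locally free, and its rank can be computed on the free-orbit locus. There, $M\to X$ is \'etale in a neighborhood of $\bar Z$, so by compatibility of Frobenius with \'etale morphisms (as used in Proposition \ref{prop-etale}) one has $(F^{e}_{M})^{-1}(Z)\cong M\times_{X}(F^{e}_{X})^{-1}(\bar Z)$ locally near $Z$, and its $G$-quotient is simply $(F^{e}_{X})^{-1}(\bar Z)$, a length-$q^{d}$ cluster in $X_{e}$. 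Thus $\mathcal{F}$ has rank $q^{d}$ throughout, yielding a flat family of length-$q^{d}$ subschemes of $X_{e}$ over $\GHilb(M)$. The induced morphism $\GHilb(M)\to \Hilb_{q^{d}}(X_{e})$ agrees on the dense free-orbit locus with the composition of $\GHilb(M)\to X_{\sm}$ with the defining immersion $X_{\sm}\hookrightarrow \FB_{e}(X)$, so by irreducibility of $\GHilb(M)$ and closedness of $\FB_{e}(X)$ it factors through $\FB_{e}(X)$; compatibility with the projection to $X$ is clear since each $(F^{e}_{M})^{-1}(Z)/G$ is set-theoretically supported over the image of $Z$ in $X$. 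The main obstacle is the rank/flatness claim for $\mathcal{F}$, and this is precisely where the tameness hypothesis $p\nmid |G|$ is essential.
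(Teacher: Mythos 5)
Your proposal follows essentially the same route as the paper: both build the family $\cZ_{e}\subset\GHilb(M)\times M_{e}$ by pulling back the universal $G$-cluster along $1\times F^{e}_{M}$, pass to the $G$-quotient $\cZ_{e}/G\subset\GHilb(M)\times X_{e}$, and invoke the universal property of $\Hilb_{q^{d}}(X_{e})$ together with irreducibility of $\GHilb(M)$ and agreement with $\iota$ on the free-orbit locus. The one place where you diverge is the justification of the key step, flatness of the quotient family with constant length $q^{d}$: the paper argues fiberwise, noting that $H^{0}(\cO_{(F^{e})^{-1}(Z)})$ is isomorphic as a $G$-representation to $q^{d}$ copies of the regular representation (since $H^{0}(\cO_{Z})\cong k[G]$ for a point of $\GHilb(M)$ and $F^{e}_{M}$ is finite flat of degree $q^{d}$ by Kunz), so the invariants have dimension $q^{d}$ at every fiber; you instead use the Reynolds operator to exhibit $(\pi_{*}\cO_{\cZ_{e}})^{G}$ as a direct summand of a locally free sheaf, hence locally free, and compute the rank only at a generic (free-orbit) point. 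Your version is slightly more robust in that it never needs to identify the representation type of a degenerate fiber, while the paper's version gives the fiberwise length statement directly (which it records as the first sentence of its proof); both correctly locate the tameness hypothesis $p\nmid|G|$ as the essential input. One small point worth tightening: your final claim that compatibility with the projection to $X$ is ``clear from set-theoretic supports'' is cleanest phrased as the observation that the two morphisms $\GHilb(M)\to X$ agree on the dense open free-orbit locus, and $\GHilb(M)$ is reduced and $X$ separated.
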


\begin{proof}
We first note that $(F ^{e})^{-1}(Z) /G $ has the expected length $q^{d}$.
Indeed $(F ^{e})^{-1}(Z) $ has length $q^{d} \cdot |G|$ and $H^{0}(\cO_{(F ^{e})^{-1}(Z) })$
is, as a $G$-representation, isomorphic to the direct sum of $q^{d}$
copies of the regular representation.
Hence the length of $(F ^{e})^{-1}(Z) /G $ is $ \dim _{k} H^{0}(\cO_{Z^{[q]}})^{G} =
q^{d}$.

Let $\cZ _{e} \subset  M_{e}\times\GHilb (M) $ be as above.
Then $\cZ_{e}/G \subset  X_{e}\times\GHilb (M) $
is a flat family which generically parameterizes $(F^{e})^{-1}(x) $, $x \in X_{\sm}$.
Now from the universality, we obtain the desired morphism 
$\GHilb (M) \to \FB_{e} (X)$.
\end{proof}

\begin{thm}\label{thm-GtoF-isom}
For sufficiently large $e$,  the morphism
$\GHilb (M) \to \FB_{e} (X)$ is an isomorphism. 
If $G$ is abelian, then $q > | G|$ is sufficient for the assertion to hold.
\end{thm}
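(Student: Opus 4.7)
The plan is to construct an explicit inverse $\psi_e \colon \FB_e(X) \to \GHilb(M)$ for $e$ sufficiently large; since $\phi_e \colon \GHilb(M) \to \FB_e(X)$ from Proposition \ref{prop-GtoF} is projective and birational, exhibiting a two-sided inverse forces both maps to be isomorphisms. A preliminary reduction uses Proposition \ref{prop-etale} together with the analogous étale-base-change compatibility of $\GHilb(M)$ over $X$ (which holds by the tame étale slice theorem, available because $p \nmid |G|$), and Proposition \ref{prop-completion}, to reduce to the formal-local situation where $M = \Spec R$ for a complete regular local $k$-algebra, $G$ equals the stabilizer $G_x$ acting $k$-linearly and fixing the closed point, and $X = \Spec R^G$.

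Pointwise, $\psi_e$ is defined as follows. Given $W \in \FB_e(X)(K)$, realized as a length-$q^d$ cluster $W \subset X_{e,K}$, pull back along the quotient map $M_e \to X_e$ to obtain a $G$-invariant cluster $\tilde W := W \times_{X_e} M_e \subset M_{e,K}$ of length $|G|q^d$; set $\psi_e(W) := Z$, where $Z \subset M_K$ is the unique length-$|G|$ subscheme satisfying $Z^{[q]} = \tilde W$. Uniqueness of $Z$ is automatic from Kunz's theorem, since $R \hookrightarrow R^{1/q}$ is faithfully flat and hence the Frobenius-power operation on ideals of the regular ring $R$ is injective; $G$-invariance of $\tilde W$ then forces $Z$ to be $G$-invariant, and its length being $|G|$ places it in $\GHilb(M)(K)$.

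To promote $\psi_e$ to a morphism of schemes, pull back the universal cluster $\cW \subset \FB_e(X) \times_X X_e$ to a $G$-invariant flat family $\tilde{\cW} \subset \FB_e(X) \times_X M_e$, and let $\cZ \subset \FB_e(X) \times_X M$ be the schematic closure of the tautological family of free $G$-orbits lying over $X_\sm$. The goal is to prove that $\cZ$ is flat of constant relative length $|G|$ over $\FB_e(X)$ and that $\cZ^{[q]} = \tilde{\cW}$ globally; once established, the universal property of $\GHilb(M)$ yields the morphism $\psi_e$, which by construction is inverse to $\phi_e$.

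The flatness of $\cZ$ over the whole of $\FB_e(X)$ is the main obstacle, and it is precisely where the hypothesis "$e$ large" (or $q > |G|$ for $G$ abelian) enters. In the abelian case, diagonalizing the action gives $R = k[[x_1, \ldots, x_d]]$ with $G$ acting by characters, so $X = \Spec k[A]$ for $A \subset \ZZ^d$ the submonoid of invariant characters, and the bound $q > |G|$ ensures that $(1/q)\cdot A$ contains the full character lattice of $G$. A direct comparison between the Gröbner-fan description of $\FB_e(X)$ from Proposition \ref{prop-coord-ring} and Nakamura's description of $\GHilb(M)$ by $G$-graded monomial ideals then matches their affine charts, yielding both the flatness of $\cZ$ and the identity $\cZ^{[q]} = \tilde{\cW}$. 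For general tame $G$, one argues by semicontinuity: the non-flat locus of $\cZ \to \FB_e(X)$ is closed, disjoint from the preimage of $X_\sm$, and shrinks monotonically as $e$ grows (by the universal-flattening characterization in Proposition \ref{prop-relative}), so it must be empty for $e$ sufficiently large by a Noetherian argument.
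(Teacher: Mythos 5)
Your strategy of building an explicit inverse $\psi_e$ runs into trouble at its very first step, and the place where the hypothesis ``$e$ large'' is supposed to do work is left unsubstantiated. The pointwise definition of $\psi_e$ presumes that the pullback $\tilde W = W\times_{X_e} M_e$ of an arbitrary (in particular, boundary) point $W\in\FB_e(X)(K)$ is a Frobenius power $Z^{[q]}$. Faithful flatness of $R\hookrightarrow R^{1/q}$ for $R$ regular gives \emph{uniqueness} of such a $Z$, but says nothing about \emph{existence}, and existence is the whole content. Even for $W=\phi_e(Z)$ with $Z\in\GHilb(M)$, the scheme-theoretic pullback of $W=(F^e)^{-1}(Z)/G$ along $M_e\to X_e$ is cut out by $\bigl((I_ZR^{1/q})^G\bigr)\cdot R^{1/q}$, which is in general strictly contained in $I_ZR^{1/q}$: a $G$-stable ideal of a tame action need not be generated by its invariants (for $\mu_2$ acting on $k[x]$ by $x\mapsto -x$, the stable ideal $\langle x\rangle$ satisfies $\langle x\rangle^G\cdot k[x]=\langle x^2\rangle$). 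So $\tilde W$ need not equal $(F^e)^{-1}(Z)$, let alone be a Frobenius power of anything. Relatedly, even when such a $Z$ exists, a $G$-invariant length-$|G|$ cluster is not automatically a point of $\GHilb(M)$, which is only the closure of the locus of free orbits.

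For general $G$ you then reduce everything to the claim that the non-flat locus of $\cZ\to\FB_e(X)$ ``shrinks monotonically as $e$ grows'' and is therefore eventually empty by Noetherianity. No reason is given for this monotonicity (the base $\FB_e(X)$ itself changes with $e$, so it is not even clear what is being compared), and a descending chain of closed subsets stabilizes but need not become empty. This is exactly where the real work lies, and the paper supplies a different mechanism: it shows directly that the proper birational $\phi$ is injective and injective on tangent vectors (hence a closed immersion onto the integral scheme $\FB_e(X)$, hence an isomorphism). Completing at a point with stabilizer $H$ and linearizing, Bryant's theorem guarantees that for $e\gg 0$ (or $q>|H|$ in the diagonalized abelian case) the box $P_e=\bigoplus_{n_i<1}k\,x_1^{n_1}\cdots x_d^{n_d}\subset k[[x_1^{1/q},\dots,x_d^{1/q}]]$ contains every irreducible $H$-representation; pairing an irreducible $V\subset I_Y$ that distinguishes two clusters (or on which a tangent vector is injective) with a dual $U\subset P_e$ produces an $H$-invariant element of $I_Y\cdot k[[x_1^{1/q},\dots,x_d^{1/q}]]$ that survives modulo $I_{Y'}\cdot k[[x_1^{1/q},\dots,x_d^{1/q}]]$, by the Gr\"obner-basis argument of Lemma \ref{last-lem}. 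Your proposal contains no substitute for this representation-theoretic step, so the asymptotic assertion (and the explicit bound $q>|G|$ in the abelian case) is not actually established.
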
 

\begin{proof}
Let $\phi$ denote the morphism $\GHilb ( M) \to \FB_{e} (X)$.
We first show that $\phi$ is injective.
Take two distinct $G$-clusters $Z ,Z' \in \GHilb (M) $.
If $Z_{\red} \ne Z'_{\red} $, then $\phi (Z) \ne \phi( Z')$.
So we may suppose that $Z_{\red}= Z'_{\red}$.
Choose a point $m \in Z_{\red} =Z'_{\red}$ and connected components
$Y \subset Z$ and $Y' \subset Z'$ with $Y_{\red}=Y'_{\red}=m$.
Let $\hat M = \Spec k[[ x_{1}, \dots,x_{d} ]]$ be the completion of $M$ at $m$.
We denote closed subschemes of $\hat M$ corresponding to $Y$ and $Y'$ by the same letters. 
Let 
$I_{Y}, \, I_{Y'} \subset k[[ x_{1}, \dots,x_{d} ]]$  be the defining ideals of $Y, \, Y'$ respectively,
and $H \subset G$ the stabilizer of $m$.
Without loss of generality, we may and shall suppose that $H$ acts on $k[[x_{1}, \dots,x_{d}]]$
linearly.
Since $I_{Y} / I_{Y} \cap I_{Y'}$ is a nonzero $H$-representation,
there exists an irreducible subrepresentation $V \subset I_{Y}$ with $V \cap I_{Y'} = \{ 0\}$.
Put 
\[
P_{e} := \bigoplus_{n_{i} < 1}k \cdot x_{1}^{n_{1}} \cdots  x_{d}^{n_{d}} \subset k[[x_{1}^{1/q}, \dots, x_{d}^{1/q}]].
\] 
From Lemma \ref{lem-Bryant}, there exists an irreducible representation
 $U \subset P_{e}$ which is dual to $V$. 
(Note that $P_{e}$ may not be an $H$-subrepresentation.)
Then $V \cdot U  \subset  I_{Y} \cdot k[[x_{1}^{1/q}, \dots, x_{d}^{1/q}]]$ contains 
a trivial irreducible subrepresentation $V'$.    From Lemma \ref{last-lem},
$V'$ is not contained in $I_{Y'} \cdot k[[x_{1}^{1/q},\dots, x_{d}^{1/q}]]$.
Hence
\[
(I_{Y} \cdot k[[x_{1}^{1/q},\dots, x_{d}^{1/q}]])^{H} \ne (I_{Y'} \cdot k[[x_{1}^{1/q},\dots, x_{d}^{1/q}]])^{H}.
\] 
It follows that $\phi$ is injective.

Now it suffices to show that every nonzero tangent vector on $\GHilb (M)$
maps to a nonzero one on $\FB_{e} (X)$.
Take a nonzero tangent vector $ \epsilon \in T_{Z} \GHilb (M)$. 
With the notation as above, $\epsilon$  is identified with a nonzero $H$-equivariant 
$k[[x_{1}, \dots,x_{d}]]$-linear map
\[
 \epsilon: I_{Y} \to k[[x_{1}, \dots,x_{d}]] / I_{Y} .
\]
We extend this map to 
\[
\tilde \epsilon: I_{Y} \cdot k[[x_{1}^{1/q},\dots, x_{d}^{1/q}]] \to
k[[x_{1}^{1/q},\dots, x_{d}^{1/q}]]/I_{Y} \cdot k[[x_{1}^{1/q},\dots, x_{d}^{1/q}]] .
\]
The image of $\epsilon$ on $\FB_{e} (X)$ corresponds to the restriction of $\tilde \epsilon$,
\[
\bar \epsilon: (I_{Y} \cdot k[[x_{1}^{1/q},\dots, x_{d}^{1/q}]])^{H} \to
(k[[x_{1}^{1/q},\dots, x_{d}^{1/q}]]/I_{Y} \cdot k[[x_{1}^{1/q},\dots, x_{d}^{1/q}]])^{H}.
\]
What is left is to show that the last map is nonzero.

Since $\epsilon$ is nonzero, there exists an irreducible representation $V\subset I_{Y}$
such that $\epsilon|_V$ is injective.
Again from Lemma \ref{lem-Bryant}, there exists an irreducible subrepresentation
$U \subset P_{e}$ which is dual to $V$. 
Let $V' \subset U \cdot  V  \subset k[[x_{1}^{1/q},\dots, x_{d}^{1/q}]]$ be
a trivial subrepresentation. 
From Lemma \ref{last-lem},  the image $\bar \epsilon  (V')$ of $V'$, 
which is a trivial subrepresentation of $ \tilde \epsilon(U \cdot V) = U \cdot \epsilon (V)$,  is nonzero.
 We have completed the proof.
\end{proof}

\begin{lem}\label{lem-Bryant}
\begin{enumerate}
\item\label{Bryant-1}
With the above notation, for sufficiently large $e$,  $P_{e}$
contains all irreducible $H$-representations.
\item In addition if $H$ is abelian and if for each $i$, 
$ k \cdot x_{i}$ is stable under the $H$-action, then the preceding assertion
holds for  $q > | H |$.
\end{enumerate}
\end{lem}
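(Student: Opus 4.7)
The plan is to handle the two parts in tandem: each asserts that the $k$-subspace $P_{e} \subset k[[x_{1}^{1/q},\dots,x_{d}^{1/q}]]$ contains enough $H$-stable subspaces. Write $y_{i} := x_{i}^{1/q}$. Since $\mu_{q}=\{1\}$ in characteristic $p$, the relation $(h\cdot y_{i})^{q} = h\cdot x_{i} = \sum_{j} a_{ij}(h)\, x_{j}$ uniquely forces $h\cdot y_{i} = \sum_{j} a_{ij}(h)^{1/q}\, y_{j}$, so $V_{0} := \mathrm{span}_{k}(y_{1},\dots,y_{d})$ is a linear $H$-representation. Because entrywise $q$-th root is injective on elements of $k$, $V_{0}$ inherits the faithfulness of $V := \mathrm{span}_{k}(x_{1},\dots,x_{d})$.

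For part (2), suppose $H$ is abelian with $h\cdot x_{i} = \chi_{i}(h)\, x_{i}$ for characters $\chi_{i} \in \hat H$. Then each $y_{i}$ is an $H$-eigenvector with character $\chi_{i}^{1/q}$, the unique $q$-th root in $\mu_{|H|}$ (well-defined since $\gcd(q,|H|)=1$). Thus $P_{e}$ is itself $H$-stable, decomposing into eigenlines $k\cdot y^{n}$ for $n \in \{0,\dots,q-1\}^{d}$ with character $(\prod_{i} \chi_{i}^{n_{i}})^{1/q}$. Faithfulness of $V$ forces $\chi_{1},\dots,\chi_{d}$ to generate $\hat H$, so for any $\chi\in\hat H$ one writes $\chi^{q} = \prod_{i} \chi_{i}^{a_{i}}$ with $a_{i}\in\{0,\dots,|H|-1\}$; the hypothesis $q > |H|$ puts these exponents in the allowed range, and the monomial $y^{a}$ supplies an eigenline in $P_{e}$ of character $\chi$.

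For part (1), although $P_{e}$ may fail to be $H$-stable, each graded piece $\mathrm{Sym}^{r} V_{0} \subset k[[y_{1},\dots,y_{d}]]$ is $H$-stable, and for $r \le q-1$ every monomial of total degree $r$ has each exponent at most $q-1$ and so lies in $P_{e}$. Therefore
\[
\bigoplus_{r=0}^{q-1} \mathrm{Sym}^{r} V_{0} \;\subset\; P_{e}
\]
is an $H$-subrepresentation of $P_{e}$. It now suffices to invoke the classical fact that if $V_{0}$ is a faithful representation of a finite group $H$ over an algebraically closed field of characteristic coprime to $|H|$, then every irreducible $H$-representation embeds into $\mathrm{Sym}^{\bullet} V_{0}$, and does so in some uniformly bounded degree $N=N(H,V_{0})$. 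Choosing $e$ so that $q > N$ then forces every irreducible of $H$ to sit inside the displayed subspace.

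The main obstacle is supplying the ``Sym covers all irreducibles'' ingredient. I would argue it via a generic orbit: pick $v\in V_{0}^{*}$ with trivial $H$-stabiliser (which exists because the dual $H$-action on $V_{0}^{*}$ is again faithful), and note that restriction of functions gives an $H$-equivariant surjection $\mathrm{Sym}^{\bullet} V_{0} = k[V_{0}^{*}] \twoheadrightarrow k[Hv] \cong k[H]$; since $k[H]$ is the regular representation and since $p\nmid|H|$ makes every locally finite $H$-module semisimple (so a subquotient through the isotypic decomposition is automatically a subrepresentation), every irreducible of $H$ appears inside $\mathrm{Sym}^{\bullet} V_{0}$, and finiteness of the set of irreducibles delivers the uniform bound $N(H,V_{0})$. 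A secondary bookkeeping point worth verifying is that the Frobenius twist $V \leadsto V_{0}$ preserves faithfulness (the entrywise $q$-th-root map on matrices over $k$ is bijective), so there is no loss in passing from the action on $V$ to the twisted action on $V_{0}$.
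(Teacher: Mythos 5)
Your proof is correct and follows the same overall strategy as the paper's: untwist the $H$-action to the variables $x_i^{1/q}$, show that the regular representation (equivalently, every irreducible) already occurs among polynomials of low degree in these variables, and observe that low total degree forces each individual exponent below $q$, hence membership in $P_e$. The difference is in how the key representation-theoretic input is supplied. The paper cites Bryant's theorem twice as a black box: for part (1), that the regular representation occurs among polynomials of degree $<l$ for $l\gg 0$; for part (2), that in the abelian diagonal case every character is $\prod\chi_i^{n_i}$ with $0\le n_i<|H|$. You instead prove both facts from scratch --- for (1) via the generic free orbit $Hv\subset V_0^{*}$ and the surjection $k[V_0^{*}]\twoheadrightarrow k[Hv]\cong k[H]$ together with semisimplicity ($p\nmid|H|$), and for (2) via the observation that faithfulness forces $\chi_1,\dots,\chi_d$ to generate $\hat H$. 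This makes the argument self-contained and, for part (1), actually yields an explicit bound: interpolation on the $|H|$ points of the free orbit needs only degree $\le |H|-1$, so one may take $N=|H|-1$.

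One point you should tighten: you write the degree bound as $N=N(H,V_0)$ and then ``choose $e$ so that $q>N$,'' but $V_0$ is the inverse-Frobenius twist of $V$ by $q=p^e$ and hence itself depends on $e$, so as stated the choice is circular. This is easily repaired in any of three ways: (i) note that the bound $N=|H|-1$ from interpolation is uniform in $V_0$; (ii) note that the twist by $p^{-e}$ is an exact tensor autoequivalence permuting the finitely many irreducibles and commuting with $\mathrm{Sym}^r$, so a degree bound valid for $V$ transfers verbatim to every $V_0$; or (iii) note that the isomorphism classes $V^{(p^{-e})}$ are periodic in $e$, so the maximum of the finitely many bounds works. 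With that fixed, the proof is complete.
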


\begin{proof}
\begin{enumerate}
\item
It follows from Bryant's theorem \cite{MR1206130} that
for sufficiently large $l$, 
the subset $Q_{l} \subset k[[x_{1},\dots,x_{d}]]$ of the polynomials of degree $<l$ 
contains the regular representation. 
If $Q_{l,e} \subset k[[ x_{1}^{1/q},\dots,x_{d}^{1/q}  ]]$ denotes the 
subset of the polynomials of degree $ < l/q$, then 
$Q_{l,e}$ and $Q_{l}$ are isomorphic as $H$-sets.
Now we easily see that $Q_{l,e}$ also contains the regular representation.
If $q \ge l$, then  $Q_{l,e} \subset P_{e}$ and the assertion follows.

\item 
If $D$ denotes the set of all irreducible representations,
the map  $\alpha  : D \to D$, $V \mapsto V^{\otimes q}$ has an inverse $\alpha^{-1}$. 
In particular $\alpha$ and $\alpha^{-1}$ are bijective. 
An irreducible representation in $P_{e}$
is the image of one in 
\[
P'_{e} := \bigoplus_{n_{i} < q}k \cdot x_{1}^{n_{1}} \cdots  x_{d}^{n_{d}}\subset k[[x_{1},\dots,x_{d}]] .
\]
Hence it suffices to show that $P'_{e}$ 
 contains every irreducible representation. 
Again from Bryant's theorem, every irreducible representation 
is of the form $ \bigotimes k \cdot x_{i}^{\otimes n_{i}}$ for some $n =(n_{1},\dots,n_{d})$,
$0 \le  n_{i} < | H |$. Hence $P'_{e} $ contains
every irreducible representations. We have completed the proof.
\end{enumerate}
\end{proof}

\begin{lem}\label{last-lem}
Let $I \subset k[[x_{1},\dots,x_{d}]]$ be an ideal with $\sqrt I = \langle x_{1} , \dots, x_{d} \rangle $
and let $P_{e} \subset k[[x_{1}^{1/q} , \dots , x_{d}^{1/q}]]$ be as above. 
Then for  $ f \in k[[x_{1},\dots,x_{d}]] \setminus I$ and 
$g \in P_{e} $, we have 
$f g \not \in I\cdot k[[x_{1}^{1/q},\dots,x_{d}^{1/q}]]$.
\end{lem}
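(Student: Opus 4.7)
The plan is to exploit the fact that $S := k[[x_{1}^{1/q}, \dots, x_{d}^{1/q}]]$ is a \emph{free} module over $R := k[[x_{1}, \dots, x_{d}]]$, with a basis given precisely by the monomials spanning $P_{e}$. Let $\mathcal{B} := \{\,m_{\gamma} := x_{1}^{\gamma_{1}/q} \cdots x_{d}^{\gamma_{d}/q} \mid 0 \le \gamma_{i} < q\,\}$. Any element of $S$ is a formal power series $\sum_{\alpha} c_{\alpha} (x^{1/q})^{\alpha}$ indexed by $\alpha \in \ZZnonneg^{d}$, and writing $\alpha = q\beta + \gamma$ uniquely with $0 \le \gamma_{i} < q$ and grouping terms yields a unique expression $\sum_{\gamma} f_{\gamma} m_{\gamma}$ with $f_{\gamma} \in R$. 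Hence $S = \bigoplus_{\gamma} R \cdot m_{\gamma}$, and this same set $\mathcal{B}$ is the chosen $k$-basis of $P_{e}$.

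Given the freeness, reducing modulo $I$ gives $S/IS \cong \bigoplus_{\gamma} (R/I) \cdot m_{\gamma}$, a free $(R/I)$-module with basis $\mathcal{B}$. Lift $f$ to an element of $R \setminus I$ (still denoted $f$) and assume $g \ne 0$ (otherwise the statement is vacuous). Write $g = \sum_{\gamma} c_{\gamma} m_{\gamma}$ with $c_{\gamma} \in k$ not all zero, and pick $\gamma_{0}$ with $c_{\gamma_{0}} \ne 0$. Then in $S/IS$,
\[
 fg \;=\; \sum_{\gamma} (c_{\gamma} f) \cdot m_{\gamma},
\]
whose $m_{\gamma_{0}}$-component $c_{\gamma_{0}} f$ is a nonzero scalar multiple of the nonzero class of $f$ in $R/I$. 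Thus $fg \ne 0$ in $S/IS$, as required.

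There is no genuine obstacle here. The only substantive step is the freeness decomposition of $S$ over $R$, which may be regarded as the explicit form of Kunz's theorem for the regular local ring $R$: regularity gives flatness of Frobenius, and finite generation upgrades this to freeness, with the monomial basis $\mathcal{B}$ read off directly from the power-series expansion. Notably, the hypothesis $\sqrt{I} = \langle x_{1},\dots,x_{d}\rangle$ plays no role in the argument; it was relevant only to the earlier application where $I$ is the defining ideal of a cluster.
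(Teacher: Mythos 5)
Your proof is correct, and it takes a genuinely different route from the paper's. The paper first uses the hypothesis $\sqrt I = \langle x_1,\dots,x_d\rangle$ to replace the power series rings by polynomial rings, then fixes a total monomial order, writes $f$ in terms of standard monomials, and argues via multiplicativity of initial terms that $\initial_>(\tilde f g)\notin (\initial_> I)\cdot k[x_1^{1/q},\dots,x_d^{1/q}]$, reducing everything to the (obvious) case of monomials and monomial ideals. You instead observe that $S=k[[x_1^{1/q},\dots,x_d^{1/q}]]$ is free over $R=k[[x_1,\dots,x_d]]$ with basis exactly the monomials spanning $P_e$ (which is the explicit form of Kunz's theorem for this regular local ring, and which you verify directly by sorting exponents modulo $q$), so that $S/IS\cong\bigoplus_\gamma (R/I)\,m_\gamma$ and the nonvanishing of $fg$ is read off from a single coordinate. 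Your argument is shorter, avoids the Gröbner machinery, and in fact shows the lemma without the hypothesis on $\sqrt I$ — a point you correctly flag; the paper's version has the advantage of staying within the Gröbner/initial-ideal framework used throughout Section 3, and its reduction to the monomial case is the same device as in the surrounding toric computations. One could also note that the freeness decomposition you use is implicitly what justifies the paper's identity $\initial_>(I\cdot k[x^{1/q}])=(\initial_> I)\cdot k[x^{1/q}]$, so the two proofs rest on the same underlying fact.
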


\begin{proof}
From the assumption $\sqrt I = \langle x_{1} , \dots, x_{d} \rangle $, 
we may replace $k[[x_{1},\dots,x_{d}]]$ and
 $k[[x_{1}^{1/q},\dots,x_{d}^{1/q}]]$
with $k[x_{1},\dots,x_{d}]$ and   $k[x_{1}^{1/q},\dots,x_{d}^{1/q}]$ respectively. 
Then, in the case where $f$ and $g$ are monomials and $I$
is a monomial ideal, the assertion is obvious.
Therefore we will reduce the problem to the monomial case.
Fix a total monomial order $>$ on $k[x_{1}^{1/q},\dots,x_{d}^{1/q}]$. This induces a total monomial order
on $k[x_{1},\dots,x_{d}]$, denoted also by $>$. 
 Let $ x^{m_{i}}$, $i \in 1,\dots, n$, be the standard monomials of $I$
  (that is, those monomials not in $\initial_{>} I$), 
whose images form a vector space basis of $k[x_{1},\dots,x_{d}]/I$ (for instance ,see \cite[page 1]{MR1363949}). 
Then modulo $I$, $f$ is equal to some nonzero polynomial $\tilde f := \sum_{i=1}^{n} a_{i}x^{m_{i}}$, $a _{i } \in k$.
To obtain a contradiction, suppose $ \tilde f g \in I\cdot k[x_{1}^{1/q},\dots,x_{d}^{1/q}]$.
Then 
\[
\initial_{>} (\tilde f g ) = \initial_{>} (\tilde f  )\cdot  \initial_{>} ( g ) \in \initial_{>} ( I\cdot k[x_{1}^{1/q},\dots,x_{d}^{1/q}]) 
= (\initial_{>} I)\cdot k[x_{1}^{1/q},\dots,x_{d}^{1/q}].
\]
From the assertion for the monomial case, this does not hold. The lemma follows.
\end{proof}

The above results give nontrivial consequences both on
the $G$-Hilbert scheme and on the F-blowup.

\begin{cor}
\begin{enumerate}
\item The F-blowup sequence of $M/G$
\[
\FB_{0} (M/G ), \, \FB_{1} (M/G) , \, \FB_{2} (M /G), \dots
\]
stabilizes and leads to a modification of $M/G$ which is isomorphic to $\GHilb (M)$. 
\item The iterate Frobenius morphisms $F^{e}_{M/G}$, $e \in \ZZpos$ of $M/G$
are simultaneously flattened by the modification $\GHilb (M) \to M/G $.
\end{enumerate}
\end{cor}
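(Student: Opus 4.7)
The plan is to derive both parts of the corollary as direct consequences of Proposition \ref{prop-GtoF} and Theorem \ref{thm-GtoF-isom}; no genuinely new argument is required, since the substantive work has already been done in proving those two results.

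For part (1), I will invoke Theorem \ref{thm-GtoF-isom} to choose an integer $e_{0}$ such that for every $e \ge e_{0}$ the natural morphism $\GHilb(M) \to \FB_{e}(X)$ is an isomorphism of $X$-schemes. This canonically identifies each $\FB_{e}(X)$ with $e \ge e_{0}$ as the same modification of $X = M/G$, namely $\GHilb(M) \to X$. That is exactly the stabilization claim, and it simultaneously pins down the stable modification as $\GHilb(M)$.

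For part (2), I will fix an arbitrary $e \in \ZZpos$ and apply Proposition \ref{prop-GtoF} to obtain the factorization $\GHilb(M) \to \FB_{e}(X) \to X$. Since $\FB_{e}(X) \to X$ is, by construction, a flattening of $F^{e}_{X}$, the base change of $F^{e}_{X}$ along $\FB_{e}(X) \to X$ is flat, and base-changing one step further along $\GHilb(M) \to \FB_{e}(X)$ preserves flatness. Thus $F^{e}_{X}$ becomes flat after pull-back to $\GHilb(M)$. Since $e$ was arbitrary, the single modification $\GHilb(M) \to X$ flattens every iterated Frobenius simultaneously.

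There is no real obstacle in this corollary, as everything is packaged in the two preceding results. The only point worth flagging is the distinct roles played by them: part (1) genuinely relies on the \emph{isomorphism} statement of Theorem \ref{thm-GtoF-isom} (so one must pass to $e \gg 0$), whereas part (2) needs only the \emph{existence} of the factorization in Proposition \ref{prop-GtoF}, which holds for every $e$ and is what makes the flattening simultaneous rather than merely asymptotic.
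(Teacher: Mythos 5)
Your proposal is correct and matches the paper's proof, which simply declares the corollary a direct consequence of Proposition \ref{prop-GtoF} and Theorem \ref{thm-GtoF-isom}; you have merely spelled out the (routine) details. The only point to keep in mind is that ``flattening'' here refers to the flatness of the universal family (the strict transform/dominant component) rather than of the literal fiber product, but since that flat family pulls back to a flat family under $\GHilb(M)\to\FB_{e}(X)$ and still agrees generically with the graph of $F^{e}$, your argument for part (2) goes through as stated.
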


\begin{proof}
Direct consequences of Proposition \ref{prop-GtoF} and Theorem \ref{thm-GtoF-isom}.
\end{proof}

\begin{cor}
The $G$-Hilbert scheme $\GHilb (M)$ depends
only on the quotient variety $M/G$. 
Precisely, if $(G',M')$ satisfies the same assumption as $(G,M)$ does and if 
$M'/G' \cong M/G$, then $\GHilb (M) \cong \Hilb^{G'}(M')$.
\end{cor}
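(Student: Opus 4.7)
The plan is to derive this corollary directly from Theorem \ref{thm-GtoF-isom} by observing that the F-blowup is, by its very definition, an invariant of the quotient variety alone.

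First, I would fix a perfect algebraically closed base field and consider both pairs $(G,M)$ and $(G',M')$ satisfying the standing hypotheses (smooth $M,M'$, tame finite group actions), together with an isomorphism $\phi: M/G \xrightarrow{\sim} M'/G'$. Set $X := M/G$ and identify $X$ with $M'/G'$ via $\phi$. The key point is that $\FB_e(X)$ is defined purely in terms of the variety $X$ and its $e$-th Frobenius $F^e_X : X_e \to X$, with no reference to any presentation of $X$ as a quotient. Consequently the $X$-scheme $\FB_e(X)$ is the same object whether we view $X$ as coming from $(G,M)$ or from $(G',M')$.

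Next, I would apply Theorem \ref{thm-GtoF-isom} to each presentation. It guarantees an integer $e_0$ such that the natural morphism $\GHilb(M) \to \FB_{e_0}(X)$ of Proposition \ref{prop-GtoF} is an isomorphism, and similarly an integer $e_0'$ for which $\Hilb^{G'}(M') \to \FB_{e_0'}(X)$ is an isomorphism. Choosing any $e \ge \max(e_0, e_0')$ (and noting that the theorem's statement is valid for all sufficiently large $e$, not just a single one), we obtain isomorphisms
\[
\GHilb(M) \xrightarrow{\sim} \FB_e(X) \xleftarrow{\sim} \Hilb^{G'}(M'),
\]
and composing these yields the desired isomorphism $\GHilb(M) \cong \Hilb^{G'}(M')$ over $X$.

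I do not anticipate a real obstacle here, since all the substantive work has been done in Theorem \ref{thm-GtoF-isom}; the corollary is essentially a tautology once one recognizes that $\FB_e(X)$ is intrinsic to $X$. The only minor point requiring a sentence of justification is the assertion that Theorem \ref{thm-GtoF-isom} produces isomorphisms for \emph{all} sufficiently large $e$ (so that a common $e$ can be chosen for the two presentations); this is implicit in the statement and proof, since the argument shows bijectivity of $\phi$ and injectivity of tangent maps once $P_e$ contains all irreducible representations of the relevant stabilizers, a condition which persists for all larger $e$.
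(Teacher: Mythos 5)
Your proposal is correct and follows exactly the paper's argument: the F-blowup $\FB_e(X)$ is intrinsic to $X$, so Theorem \ref{thm-GtoF-isom} applied to both presentations with a common sufficiently large $e$ yields $\GHilb(M)\cong \FB_e(X)\cong \Hilb^{G'}(M')$. Your extra remark that the isomorphism persists for all larger $e$ (so a common $e$ can be chosen) is a worthwhile point the paper leaves implicit.
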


\begin{proof}
Since $\FB_{e} (M/G)$ depends only on $M/G$ and $e$,
the corollary follows 
from Theorem \ref{thm-GtoF-isom}.
\end{proof}

\begin{cor}
With the notation as above, suppose that 
$M$ is 2-dimensional.
Then $\GHilb (M)$ is the minimal resolution of $M/G$.
\end{cor}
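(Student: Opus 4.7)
The plan is to combine Theorem \ref{thm-GtoF-isom} with the two-dimensional toric result (Theorem \ref{thm-minimal}) via a local linearization of the group action. For $e \gg 0$, Theorem \ref{thm-GtoF-isom} identifies $\GHilb(M)$ with $\FB_{e}(X)$ where $X = M/G$, so it suffices to prove that $\FB_{e}(X)$ is the minimal resolution of $X$ for $e$ large. Minimality of a resolution of a normal surface is checked locally at each singular point, so by Propositions \ref{prop-etale} and \ref{prop-completion} I would first pass to the completion $\hat X _{x}$ at each closed point $x \in X$.

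Fix $x \in X$ and a point $m \in M$ above it with stabilizer $H \subset G$. Tameness ($p \nmid |H|$) lets us linearize the $H$-action on $\hat \cO_{M,m} \cong k[[x_{1},x_{2}]]$, so analytically $\hat X _{x}$ is the linear quotient $\Spec k[[x_{1},x_{2}]]^{H}$ with $H \subset GL(2,k)$. Next I would discard the pseudo-reflection subgroup: let $N \triangleleft H$ be the normal subgroup generated by all pseudo-reflections in $H$, so that by the tame Chevalley--Shephard--Todd theorem $k[x_{1},x_{2}]^{N}$ is again a polynomial ring and $H/N$ acts on this polynomial ring as a small subgroup of $GL(2,k)$. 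Using the preceding corollary, which says that $\GHilb$ depends only on the quotient variety, we may replace the pair $(M,G)$ locally by $(\AA^{2}, H/N)$ and thereby reduce to the case that $H$ is small.

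When the resulting small group $H$ is abelian, the quotient $\AA^{2}/H$ is a normal two-dimensional toric singularity, and Theorem \ref{thm-minimal} (or Remark \ref{rem-minimal} when the singularity is not isolated) combined with completion compatibility gives the conclusion directly. When $H$ is non-abelian the singularity is of type $D$ or $E$, and I would appeal to the known computations of $\Hilb^{H}(\AA^{2})$ for small $H \subset GL(2,k)$ carried out in \cite{MR1916656,MR1714824,MR1815001}, which identify the $H$-Hilbert scheme with the minimal resolution. Feeding this back through Theorem \ref{thm-GtoF-isom} yields $\GHilb(M) \cong \FB_{e}(X) \cong$ (minimal resolution of $X$).

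The main obstacle I anticipate is the step where one discards the pseudo-reflection subgroup: one must verify that the F-blowup of $X$ near $x$ really agrees with the F-blowup of $\AA^{2}/(H/N)$ at its singular point, which is to be done by chaining Propositions \ref{prop-etale} and \ref{prop-completion} with the isomorphism $\AA^{2}/N \cong \AA^{2}$ provided by Chevalley--Shephard--Todd. A subsidiary technical concern is that the cited results of Ishii, Kidoh and Nakamura are originally stated over $\CC$, so one must check that their arguments carry over in positive characteristic under the tameness hypothesis $p \nmid |H|$.
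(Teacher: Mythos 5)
Your proposal follows essentially the same route as the paper: reduce to the case of a small group by combining Chevalley--Shephard--Todd with the fact that $\GHilb(M)$ depends only on $M/G$, then invoke the known identification of the $G$-Hilbert scheme with the minimal resolution for small subgroups of $GL(2)$ (noting that Ishii's arguments remain valid in positive characteristic under tameness). The paper states this more briefly and cites the small-group results uniformly rather than splitting into abelian and non-abelian cases, mentioning the toric/F-blowup route for abelian groups only as a parenthetical alternative, but the underlying argument is the same.
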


\begin{proof}
The corollary was proved in \cite{MR1916656,MR1420598,MR1815001}
in the case where for every $g \in G$, the fixed point locus $M^{g}$ has
codimension $\ge 2$. 
(Although they worked in characteristic zero, Ishii's arguments \cite{MR1916656} hold valid
in positive characteristic under our tameness condition.)
Since the assumption that $\codim M^{g} \ge 2$
does not give any restriction on the singularities of $M/G$
and since $\GHilb(M)$ depends only on $M/G$,  
$\GHilb(M)$ is the minimal resolution also in the general case. We have proved the corollary.
(If $G$ is abelian, then $M/G$ has 2-dimensional normal toric singularities and
the corollary follows also from Proposition \ref{prop-etale} and 
Theorems \ref{thm-minimal} and \ref{thm-GtoF-isom}.)
\end{proof}

\begin{cor}
Let $G \subset SL(3,k)$ be a finite subgroup with $p \nmid |G|$.
Then for sufficiently large $e$, $\FB_{e} (\AA^{3}_{k}/G)$ is a crepant resolution.
\end{cor}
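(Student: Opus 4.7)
The plan is to deduce this directly from the comparison between the $G$-Hilbert scheme and the F-blowup established in Theorem \ref{thm-GtoF-isom}, together with the classical result identifying $\GHilb(\AA^{3}_{k})$ as a crepant resolution.

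First, set $M = \AA^{3}_{k}$ and $X = M/G$. Since $G \subset SL(3,k)$ acts linearly on $M$ and $p \nmid |G|$, the action is tame and effective on a smooth variety, so the hypotheses of Theorem \ref{thm-GtoF-isom} apply. Hence for all $e$ sufficiently large, there is a canonical isomorphism
\[
\GHilb(M) \xrightarrow{\ \sim\ } \FB_{e}(X).
\]
Therefore it is enough to prove that $\GHilb(M)$ is a crepant resolution of $X$.

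Next, I would invoke the Bridgeland--King--Reid theorem \cite{MR1824990} together with Nakamura's theorem \cite{MR1838978}: for a finite subgroup $G \subset SL(3,\CC)$, the $G$-Hilbert scheme $\GHilb(\AA^{3})$ is smooth, and the Hilbert--Chow morphism $\GHilb(\AA^{3}) \to \AA^{3}/G$ is a crepant resolution. The only real issue is that these references are stated in characteristic zero, while here $k$ has characteristic $p > 0$. However, the hypothesis $p \nmid |G|$ (linear reductivity of $G$) ensures that the key ingredients of the BKR argument -- the trivialisation of the canonical sheaf on $M$ as a $G$-sheaf, the existence and flatness of the Hilbert--Chow morphism, the derived McKay equivalence via the Fourier--Mukai transform, and the dimension estimate for the fibre product $\GHilb(M) \times_X \GHilb(M)$ -- all go through verbatim. (Alternatively, one may reduce to characteristic zero by a spreading-out / specialisation argument: the conjugacy classes of finite subgroups of $SL(3,\bar{k})$ of order coprime to $p$ lift to subgroups of $SL(3,W(k))$, and both smoothness and crepancy are open conditions on the base.)

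The main obstacle in the plan, and the only point that requires genuine care, is verifying that the Bridgeland--King--Reid argument is truly characteristic-free under the tameness hypothesis $p \nmid |G|$. Once that is granted, $\GHilb(M) \to X$ is a crepant resolution, and transporting the statement through the isomorphism $\GHilb(M) \cong \FB_{e}(X)$ of Theorem \ref{thm-GtoF-isom} gives the corollary for all $e \gg 0$.
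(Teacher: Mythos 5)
Your proposal is correct and follows essentially the same route as the paper: the paper's proof is a one-line deduction from Theorem \ref{thm-GtoF-isom} together with the fact that $\GHilb(\AA^{3}_{k})$ is a crepant resolution, citing \cite{MR1824990,MR1838978}. Your additional care about transporting the Bridgeland--King--Reid argument to characteristic $p$ under the tameness hypothesis is a legitimate point that the paper leaves implicit (it only addresses the analogous issue explicitly in the two-dimensional case), but it does not change the structure of the argument.
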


\begin{proof}
This follows from the fact that $\GHilb(\AA^{3}_{k})$ is a crepant resolution \cite{MR1824990,MR1838978}.
\end{proof}

\subsection{Deligne-Mumford stacks}

Let $\cX$ be a separated and smooth Deligne-Mumford stack over $k$.
Suppose that $\cX$ is tame, that is, every $k$-point of $\cX$
has an automorphism group of order prime to $p$.
Then $\cX$ is locally expressed as a quotient stack 
$[M/G]$. Following Abramovich's observation, Chen and Tseng \cite{MR2407221} verified
that locally defined $G$-Hilbert schemes, $\GHilb (M)$, can be 
``glued'' together and the result is isomorphic to an irreducible component of $Q(\cO_\cX/\cX/X)$.
Here $X$ is the coarse moduli space and $Q(\cO_\cX/\cX/X)$ is Olsson-Starr's Quot algebraic
space associated to the structure sheaf $\cO_{\cX}$ (see \cite{MR2007396}).
Denote it by $\Hilb ' (\cX)$.  Then there exists a  natural proper birational morphism
$\Hilb' (\cX) \to X$. Now it is straightforward to restate
results in the preceding subsection in this generalized situation.

\begin{cor}\label{cor-stack}
For each $e \in \ZZnonneg$, the  morphism $\Hilb' ( \cX) \to X$
factors as $\Hilb'(\cX) \to \FB_{e} (X) \to X$.
Moreover for $e \gg 0$, the above morphism $\Hilb' (\cX) \to \FB_{e} (X)$ is
an isomorphism. 
Hence $\Hilb' (\cX)$ depends only on the coarse moduli space $X$.
\end{cor}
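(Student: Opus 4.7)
The plan is to reduce to the global quotient case already handled by Proposition \ref{prop-GtoF} and Theorem \ref{thm-GtoF-isom}, and then glue by \'etale descent using the compatibility of $\FB_{e}$ with \'etale morphisms (Proposition \ref{prop-etale}). Because $\cX$ is a smooth tame Deligne--Mumford stack of finite type with coarse moduli $X$, one can choose a \emph{finite} \'etale cover $\{U_{i} \to X\}_{i=1}^{n}$ such that the base-changed stack $\cX \times_{X} U_{i}$ is a global quotient $[M_{i}/G_{i}]$ with $p \nmid |G_{i}|$ and $M_{i}/G_{i} = U_{i}$; this is precisely the local structure used by Chen--Tseng to define $\Hilb'(\cX)$, and over each $U_{i}$ it provides an identification $\Hilb'(\cX) \times_{X} U_{i} \cong \GHilb(M_{i})$.

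First I would construct the factorization. Proposition \ref{prop-etale} gives $\FB_{e}(X) \times_{X} U_{i} \cong \FB_{e}(U_{i})$, and Proposition \ref{prop-GtoF} applied to the presentation $U_{i} = M_{i}/G_{i}$ yields morphisms $\phi_{i} : \GHilb(M_{i}) \to \FB_{e}(U_{i})$ over $U_{i}$, each sending a $G_{i}$-cluster $Z$ to $(F^{e})^{-1}(Z)/G_{i}$. To glue the $\phi_{i}$ into a morphism $\Hilb'(\cX) \to \FB_{e}(X)$, I would check that they agree after pull-back to each overlap $U_{i} \times_{X} U_{j}$: both maps classify, by the universal property of the relative Hilbert scheme, the same flat family of clusters on the Frobenius twist, namely the quotient of the pull-back of the Chen--Tseng universal family $\cZ_{e} \subset \Hilb'(\cX) \times_{X} \cX_{e}$ by the inertia action. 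Since this family is intrinsic to $\cX$ and $X$, descent of morphisms of schemes along the \'etale cover $\{U_{i} \to X\}$ yields the desired $\Hilb'(\cX) \to \FB_{e}(X)$, factoring the natural map to $X$.

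Second, for the isomorphism statement: Theorem \ref{thm-GtoF-isom} gives an integer $e_{i}$ (depending on the local group $G_{i}$) such that $\phi_{i}$ is an isomorphism for $e \geq e_{i}$. Finiteness of the cover lets me take $e \gg 0$ uniformly, making every $\phi_{i}$ an isomorphism simultaneously. Since being an isomorphism is \'etale-local on the target, the glued global morphism $\Hilb'(\cX) \to \FB_{e}(X)$ is an isomorphism for $e \gg 0$. The final assertion that $\Hilb'(\cX)$ depends only on $X$ is then immediate, since $\FB_{e}(X)$ is manifestly a functor of $X$ alone.

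The main obstacle is verifying the overlap compatibility in the gluing step: one must confirm that the \'etale-local identification $\Hilb'(\cX)|_{U_{i}} \cong \GHilb(M_{i})$ coming from Chen--Tseng's construction is \emph{canonical} enough that the morphisms $\phi_{i}$ and $\phi_{j}$ really do agree on $U_{i} \times_{X} U_{j}$, independently of the chosen presentation $[M_{i}/G_{i}]$ on each piece. This amounts to showing that Olsson--Starr's universal family on $Q(\cO_{\cX}/\cX/X)$ pulls back, along any local presentation, to the standard universal $G_{i}$-cluster family, so that the cluster $(F^{e})^{-1}(Z)/G_{i} \subset X_{e}$ is presentation-independent. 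Once this naturality is isolated, everything else is formal \'etale descent plus the results already in hand.
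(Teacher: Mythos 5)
Your proposal is correct and follows essentially the argument the paper intends: the paper itself only remarks that "it is straightforward to restate results in the preceding subsection," and your elaboration — étale-local presentations $[M_i/G_i]$, Proposition \ref{prop-etale} to identify $\FB_e(X)\times_X U_i$ with $\FB_e(U_i)$, Proposition \ref{prop-GtoF} and Theorem \ref{thm-GtoF-isom} on each chart, a uniform choice of $e$ from quasi-compactness, and gluing via the universal property of the relative Hilbert scheme — is exactly that restatement made precise. You also correctly isolate the only point requiring care (presentation-independence of the family $(F^e)^{-1}(Z)/G_i$ on overlaps), which is handled by the intrinsic description of the Chen--Tseng/Olsson--Starr universal family.
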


%
%

\end{document}